\newcommand\setcurrentname[1]{\def\@currentlabelname{#1}}
\newcommand{\Z}{\mathbb{Z}}
\newcommand{\Q}{\mathbb{Q}}
\newcommand{\R}{\mathbb{R}}
\newcommand{\C}{\mathbb{C}}
\newcommand{\N}{\mathbb{N}}
\newcommand{\eps}{\varepsilon}
\newcommand{\bb}{,\ldots ,}
\DeclarePairedDelimiter\floor{\lfloor}{\rfloor}
\newcommand{\hf}{{}_2\mathcal{F}_1}
\newcommand{\qi}{\Z_{\Q(\sqrt{-d})}}
\newcommand{\Qd}{\Q(\sqrt{-d})}
\newcommand{\pA}{\mathbf{A}}
\newcommand{\pB}{\mathbf{B}}
\newcommand{\pC}{\mathbf{C}}
\newcommand{\pP}{\mathbf{P}}
\newcommand{\pU}{\mathbf{U}}
\newcommand{\pa}{\mathbf{a}}
\newcommand{\pb}{\mathbf{b}}
\newcommand{\pc}{\mathbf{c}}
\newcommand{\pd}{\mathbf{d}}
\newcommand{\pu}{\mathbf{u}}
\newcommand{\pz}{\mathbf{z}}
\newcommand{\pw}{\mathbf{w}}
\newcommand{\pY}{\mathbf{Y}}
\DeclareMathOperator{\disc}{disc}
\DeclareMathOperator{\id}{id}
\newcommand{\case}[1]{\noindent\emph{Case #1:}}
\newtheorem*{theorem*}{Theorem}
\newtheorem{thm}{Theorem}
\newtheorem{lem}{Lemma}
\newtheorem{cor}{Corollary}
\newtheorem{prop}{Proposition}
\newtheorem*{claim*}{Claim}
	\newcounter{countknownthm}
\newtheorem{knownthm}[countknownthm]{Theorem}
	\newcounter{countknownlem}
\newtheorem{knownlem}[countknownlem]{Lemma}
\newenvironment{customlem}[1]
  {\innercustomlem}
  {\endinnercustomlem}
\theoremstyle{definition}
	\newtheorem{rem}{Remark}
\newtheoremstyle{stepstyle}{0pt}{\topsep}{\normalfont}{0pt}{\itshape}{:}{5pt plus 1pt}{}
\theoremstyle{stepstyle}
\newenvironment{step}[1]
  {\innerstep}
  {\endinnerstep}
\newenvironment{steps}[1]
  {\innersteps}
  {\endinnersteps}	
\begin{document}
\title[Family of Thue equations over imaginary quadratic fields]{On a simple quartic family of Thue equations over imaginary quadratic number fields}
\subjclass[2020]{11D59, 11R11, 11Y50} 
\keywords{Relative Thue equation, Parametrized Thue equation, Hypergeometric method}

\author[B. Earp-Lynch]{Benjamin Earp-Lynch}
\address{B. Earp-Lynch,
	Carleton University,
    1125 Colonel By Dr, Ottawa, ON, Canada}
\email{benjaminearplynch@cmail.carleton.ca}

\author[B. Faye]{Bernadette Faye}
\address{B. Faye,
	UFR SATIC, Universit\'e Alioune Diop de Bambey,
	Diourbel, Bambey 30, S\'en\'egal}
\email{bernadette.faye@uadb.edu.sn}

\author[E. G. Goedhart]{Eva G. Goedhart}
\address{E. Goedhart,
Franklin \& Marshall College,
415 Harrisburg Ave,
Lancaster, PA 17603, United States of America}
\email{eva.goedhart\char'100fandm.edu}

\author[I. Vukusic]{Ingrid Vukusic}
\address{I. Vukusic,
University of Salzburg,
Hellbrunnerstrasse 34/I,
5020 Salzburg, Austria}
\email{ingrid.vukusic\char'100plus.ac.at}

\author[D. P. Wisniewski]{Daniel~P.\ Wisniewski}
\address{D. P. Wisniewski,
Department of Mathematics/Computer Science,
DeSales University,
Center Valley, PA 18034, United States of America
}
\email{daniel.wisniewski@desales.edu}


\begin{abstract}
Let $t$ be any imaginary quadratic integer with $|t|\geq 100$. We prove that the inequality
\[
	|F_t(X,Y)|
	 = |  X^4 - t X^3 Y - 6 X^2 Y^2  + t X Y^3 + Y^4 |
	\leq 1
\]
has only trivial solutions $(x,y)$ in integers of the same imaginary quadratic number field as $t$. 
Moreover, we prove results on the inequalities $|F_t(X,Y)| \leq C|t|$ and $|F_t(X,Y)| \leq |t|^{2 -\eps}$.
These results follow from an approximation result that is based on the hypergeometric method. The proofs in this paper require a fair amount of computations, for which the code (in Sage) is provided.
\end{abstract}

\maketitle

\section{Introduction}

In $1909$, Thue \cite{Thue1909} proved that if $F(X,Y) \in \Z[x,y]$ is an irreducible form of degree at least $3$, and $m$ is a nonzero integer, then the Diophantine equation (called a {\it{Thue equation}})
$$F(x,y)=m$$
has only finitely many solutions over the integers. With the development of the theory of lower bounds for linear forms in logarithms and reduction methods, it became possible to solve specific Thue equations completely.
Since the late 1980s, there exist algorithms to solve single Thue equations completely (see in particular the method of Tzanakis and De Weger \cite{TzanakisDeWeger1989}). 

In 1990, Thomas \cite{Thomas1990} considered the parametrized family of Thue equations
\begin{equation}\label{eq:Thomas}
	F^{(3)}_t(X,Y)		
	:= X^3 - (t-1)X^2 Y - (t+2) X Y^2 - Y^3  
	= 1
\end{equation}
over integers. The resolution of \eqref{eq:Thomas} was completed shortly afterwards by Mignotte \cite{Mignotte1993}.

Since then, various families of Thue equations have been solved (see \cite{Heuberger2006-surv} for a survey from 2005). 

In particular, we want to mention that the form in equation~\eqref{eq:Thomas} is called a ``simplest'' cubic form, as defined by Lettl et al. \cite{LettlPethoVoutier1999}. In their paper, Lettl et al. moreover considered the higher degree ``simplest forms''
\begin{align*}
	F^{(4)}_t(X,Y)		
	&:= X^4 - t X^3 Y - 6 X^2 Y^2  + t X Y^3 + Y^4, \\
	F^{(6)}_t(X,Y)
	&:= X^6 - 2 t X^5 Y - (5t + 15) X^4 X^2 - 20 X^3 Y^3 + 5 t X^2 Y^4 + (2t + 6) X Y^5  + Y^6.
\end{align*}
They solve inequalities of the shape 
\[
	| F^{(i)}_t(X,Y)| \leq k(t),
\]
where $k\colon \Z \to \N$ is a function in $t$, for example a linear one. Note that the fields associated with the corresponding univariate polynomials $f_t^{(i)}(X):= F_t^{(i)}(X,1)$ for $i=3,4,6$ were already traditionally called ``simplest fields.'' Lettl et al. \cite{LettlPethoVoutier1999} introduced a formal definition for simplest forms. However, their definition in fact includes more forms than they considered. The full set of simplest forms can be obtained by introducing an extra parameter in $F^{(3)}_t(X,Y)$, $F^{(4)}_t(X,Y)$ and $F^{(6)}_t(X,Y)$. This was pointed out by Wakabayashi~\cite{Wakabayashi2007}, who then also considered those Thue inequalities.

In general, most families of Thue equations and inequalities have been considered in an integer setting, i.e.\ where the parameter(s) and solutions are all integers. However, there also exist results in number field settings (relative Thue equations) and in the function field setting. 
In the function field setting, many different families have been studied; let us just mention that simplest families have been studied in \cite{FuchsZiegler2006} and \cite{FayeVukusicWaxmanZiegler2023}.
We now focus on relative Thue equations, i.e.\ Thue equations, where the coefficients and solutions are in a fixed number field. The name ``relative'' comes from the fact that if viewed as a norm form equation, the relative norm is taken.

As for classical (absolute) Thue equations, there exist algorithms for solving single relative Thue equations (in particular see \cite{GaalPohst2002}). However, only a few families of relative Thue equations have been studied. To the authors' best knowledge, only the following results exist so far: Families of relative Thue equations and inequalities related to $F^{(3)}_t(X,Y)$ have been solved in \cite{HeubergerPethoTichy2002}, \cite{Heuberger2006} and \cite{KirschenhoferLamplThuswaldner2007}; some ``non-simple'' families of degree 3 and 4 have been studied in \cite{Ziegler2005},  \cite{Ziegler2006} and \cite{JadrijevicZiegler2006}; and recently, Gaál et al. \cite{GaalJadrijevicRemete2019} considered the inequalities $|F^{(4)}_t(X,Y)|\leq 1$ and $|F^{(6)}_t(X,Y)|\leq 1$. All these relative Thue equations and inequalities have been studied over imaginary quadratic number fields, because those are the only number fields where integers stay away from each other and methods from the classical setting can be adapted. In particular, Heuberger \cite{Heuberger2006} completely solved the family of Thue equations
\[
	F^{(3)}_t(X,Y) = \mu,
\]
where the parameter $t$, the root of unity $\mu$ and the solutions $x$ and $y$ are integers in the same imaginary quadratic number field. 
Gaál et al. \cite{GaalJadrijevicRemete2019} completely solved the families $|F^{(4)}_t(X,Y)|\leq 1$ and $|F^{(6)}_t(X,Y)|\leq 1$, but only for rational integer parameters $t$. Their degree 4 result in particular implies the following (we exclude small integer values of $t$ here for simplicity, as some of them lead to sporadic solutions):

\begin{knownthm}\label{thm:Gaal}
Let $t\in \Z$ with $|t| \geq 5$ and let $d$ be a positive square-free integer.
Then the inequality
\begin{equation}\label{eq:Gaal}
	|F_t^{(4)}(X,Y)| \leq 1 
	\quad \text{in } X,Y \in \qi
\end{equation}
has only trivial solutions, i.e.\ solutions of the shape $(0,0)$, $(\xi, 0)$ or $(0,\xi)$, where $\xi$ is a root of unity in $\qi$.
\end{knownthm}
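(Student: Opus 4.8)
The plan is to follow the classical strategy for resolving a parametrized family of Thue inequalities, adapted to the relative (imaginary quadratic) setting. First I would record the arithmetic of the form: writing $f_t(X)=F_t^{(4)}(X,1)=X^4-tX^3-6X^2+tX+1$, its four roots $\alpha=\alpha^{(1)},\dots,\alpha^{(4)}$ are permuted cyclically by the M\"obius substitution $\alpha\mapsto(\alpha-1)/(\alpha+1)$ (whose square is $\alpha\mapsto-1/\alpha$), so $\Q(\alpha)$ is the ``simplest quartic field'' and $\alpha$ together with its conjugates supplies explicit units of the relevant order; moreover, for $|t|$ large the four roots are close to $t$, $1$, $-1/t$, $-1$, hence are pairwise well separated with separation controlled by $|t|$. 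The automorphism group of the binary form acts on solutions and sends trivial solutions to trivial solutions, so it lets me assume that a hypothetical nontrivial solution $(x,y)\in\qi^2$ satisfies $|x/y-\alpha^{(j)}|$ small for one fixed index $j$ and one fixed embedding of $\Qd(\alpha)$ into $\C$.

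Next, from $|F_t^{(4)}(x,y)|=\prod_i|x-\alpha^{(i)}y|\leq 1$ together with lower bounds for the three ``wrong'' factors (which follow from the root separation, hence involve $|t|$), I would derive the key approximation inequality
\[
	\left|\frac{x}{y}-\alpha^{(j)}\right|\ \ll\ \frac{1}{|t|\,|y|^{4}},
\]
so that $x/y$ is an extremely good approximation to $\alpha^{(j)}$. From here there are two natural routes. The first is Baker's method of linear forms in logarithms (presumably that of G\'aal et al.): form Siegel's identity among the differences $x-\alpha^{(i)}y$, rewrite it as a linear form $\Lambda$ in logarithms of units of $\qi(\alpha)$ with algebraic coefficients, bound $|\Lambda|$ from above by the displayed inequality and from below by Matveev's theorem, and conclude (after combining with an elementary ``gap'' estimate) that $\max(|x|,|y|)$ is bounded by an absolute constant; a reduction step (LLL, resp.\ a Baker--Davenport argument over $\qi$) then leaves only finitely many triples to inspect. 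The second route, the one this paper follows, is the hypergeometric (Thue--Siegel) method: one writes a suitable power of a root as a $(1-z)^{1/4}$-type expression in a parameter $z=z(x,y,t)$ of size about $|t|^{-2}$, constructs the classical Pad\'e approximants $p_n(z)-q_n(z)(1-z)^{1/4}$ to ${}_2F_1$ functions with controlled heights and a zero of high order at $z=0$, specializes at the algebraic value of $z$ coming from $(x,y,t)$, and plays the resulting small-but-nonzero quantity against the approximation inequality; this yields an effective, explicit threshold on $|t|$ (here $|t|\geq 5$, and $|t|\geq 100$ in the present paper's stronger result) without invoking logarithmic-form estimates.

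Finally, whichever route is used, it remains to check the finitely many surviving cases: for the small solutions below the derived bound, and for the parameter values $t$ below the threshold (or, in the Baker route, in the finite range cut out by the reduction), one verifies directly that the only solutions are $(0,0)$, $(\xi,0)$, $(0,\xi)$ with $\xi$ a root of unity in $\qi$; this is a finite computation, naturally organized by the discriminant $-d$, with only $d\in\{1,3\}$ contributing nontrivial roots of unity. The main obstacle I anticipate is making the $|t|$-dependence in the two sides compatible: one must ensure that the lower bound for $|\Lambda|$ (respectively the non-vanishing and size estimate for the Pad\'e remainder) degrades slowly enough in $|t|$ to beat the $|t|^{-1}$-type gain in the upper bound all the way down to the claimed threshold. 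Everything else --- the case split according to which root $x/y$ approximates, the bookkeeping over the two complex embeddings and over $d$, and the final numerical verification --- is routine but is where the ``fair amount of computation'' mentioned in the abstract is spent.
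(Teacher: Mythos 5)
Your plan would work, but it is not the route taken in the cited reference, which is what this theorem actually reports (it appears here as a quoted known result, not one the present paper proves). Gaál, Jadrijević, and Remete do not attack the relative inequality over $\qi$ head-on with Baker's method or the hypergeometric method. Instead they invoke the general reduction principle of \cite{GaalJadrijevicRemete2018}: for a binary form $F$ with \emph{rational integer} coefficients, the relative inequality $|F(x,y)|\leq 1$ in $x,y\in\qi$, uniformly over all square-free $d$, can be reduced to the corresponding absolute inequality $|F(x,y)|\leq 1$ in $x,y\in\Z$. Since $t\in\Z$ in Theorem~\ref{thm:Gaal}, the form $F^{(4)}_t$ has integer coefficients and the hypothesis is met, so the relative problem collapses onto the already-solved absolute simplest-quartic family over $\Z$; this is why the surrounding text says they prove Theorem~\ref{thm:Gaal} rather quickly. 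Your plan, by contrast, rebuilds the full gap/approximation machinery from scratch in the relative setting. That is much heavier --- indeed, the hypergeometric route carried out in this paper only reaches $|t|\geq 100$, not $|t|\geq 5$, so your expectation that it ``yields'' the threshold $|t|\geq 5$ is off --- but it is also the only approach available once $t$ is allowed to be an arbitrary imaginary quadratic integer, because then $F^{(4)}_t$ no longer has rational coefficients and the reduction principle is unavailable. That trade-off is exactly the point of the discussion in the paper: the reduction buys a cheap Theorem~\ref{thm:Gaal} in the integer-parameter case, while the direct hypergeometric method buys the stronger Theorem~\ref{thm:main100} at much greater computational cost.
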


The proof of Theorem~\ref{thm:Gaal} in \cite{GaalJadrijevicRemete2019} is based on a previous paper by the same authors \cite{GaalJadrijevicRemete2018}. There they give a method for reducing the resolution of a relative Thue inequality to the resolution of the corresponding absolute Thue inequality. With this method, they are able to prove Theorem~\ref{thm:Gaal} rather quickly. However, the method only works for forms with integers coefficients. 

In this paper, we want to extend Theorem~\ref{thm:Gaal} and allow the parameter $t$ to be an imaginary quadratic integer as well.
As in \cite{LettlPethoVoutier1999} and \cite{Heuberger2006}, our proof relies on the hypergeometric method. We will solve inequality~\eqref{eq:Gaal} for imaginary quadratic integers $t$ with $|t|\geq 100$ (see Theorem~\ref{thm:main100}).

Moreover, we will prove results on some inequalities with larger upper bound than $1$ (in the style of \cite{LettlPethoVoutier1999}), see Corollaries~\ref{cor:lin} and \ref{cor:eps}. All these results will be based on the approximation result (Proposition~\ref{prop:approx}) obtained from the hypergeometric method. 

The results are presented in the next section, as well as an outline of the rest of the paper (see Table~\ref{tab:overview}).

Finally, let us point out that our goal is to present the proofs 
in full detail and also provide the used Sage code. The code is linked at the appropriate places throughout the paper and the URLs can also be found in the \nameref{sec:appendix}.

\section{Results and outline of the paper}\label{sec:results}

Let us set 
\[
	F_t(X,Y)
	:= F_t^{(4)}(X,Y)
	= X^4 -t X^3 Y - 6 X^2 Y^2 + t X Y^3 + Y^4
\]
and let $d$ be a positive square-free integer. 
We want to investigate the inequality
\begin{equation}\label{ineq:maintemp}
	|F_t(X,Y)|
	\leq 1, 
	\quad \text{in } X,Y \in \qi,
\end{equation}
where $t\in \qi$ and $\qi$ denotes the ring of integers of the number field $\Qd$. 

Before we state our main result, let us split inequality ~\eqref{ineq:maintemp} into equations and discuss some obvious solutions. 

First, note that the absolute value of an imaginary quadratic integer is either 0 (if the integer is 0), 1 (if it is a root of unity) or larger than 1 (in all other cases). Therefore, solving \eqref{ineq:maintemp} is equivalent to solving the two equations $F_t(X,Y)=0$ and $F_t(X,Y)=\mu$, where $\mu$ is a root of unity in $\qi$. The first equation will be solved by arguing that $F_t(X,1)$ is irreducible. The second equation will need more attention.

Next, note that
\[
	F_t(X,Y)
	= F_t(-X,-Y)
	= F_t(-Y,X)
	= F_t(Y,-X)
	.
\]
Therefore, with every solution $(x,y)$ of \eqref{ineq:maintemp} there usually come three more solutions $(-x,-y)$, $(-y,x)$, $(y,-x)$ and we say that these solutions are \emph{equivalent}. 

Finally, note that if $\xi$ is a root of unity in $\qi$, then $(\xi,0)$ is a solution to $F_t(X,Y)=\mu$ for $\mu = \xi^4$ and arbitrary $t$. We call such a solution, as well as all equivalent solutions, and the solution $(0,0)$ \emph{trivial solutions}.

\begin{rem}\label{rem:trivialSols}
The only roots of unity in imaginary quadratic fields are $\pm 1$, $\pm i$ and $\pm \zeta_6$, $\pm \zeta_6^2$, where $\zeta_6=(1+i\sqrt{3})/2$ is the primitive sixth root of unity. 
Therefore, if $d\notin\{1, 3\}$, we have no trivial solutions for $\mu=-1$ and we have, up to equivalence, one trivial solution for $\mu =1$ and arbitrary $t$, namely $(1,0)$. 
If $d=1$, we have no trivial solutions for $\mu \in \{-1, \pm i\}$ and we have, up to equivalence, two trivial solutions for $\mu =1$, namely $(1,0)$ and $(i,0)$.
If $d=3$, we have no trivial solutions for $\mu \in \{-1, \zeta_6, - \zeta_6^2\}$, we have one trivial solutions for $\mu =1$, one for $\mu = -\zeta_6$ and we have one trivial solution for $\mu = \zeta_6^2$, all up to equivalence and for arbitrary $t$. Those last three solutions are $(1,0)$, $(\zeta_6,0)$ and $(\zeta_6^2,0)$.
\end{rem}

We will prove the following main result.

\begin{thm}\label{thm:main100}
Let $d$ be a positive square-free integer and $t\in \qi$ with $|t|\geq 100$.
Then any solution $(x,y)\in \qi$ to the inequality
\begin{equation}\label{ineq:main}
	|F_t(X,Y)|
	\leq 1.
\end{equation} 
is trivial, i.e.\ of the shape $(0,0)$ or $(\xi, 0)$ or $(0,\xi)$, where $\xi$ is a root of unity.
\end{thm}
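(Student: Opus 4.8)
The plan is to reduce inequality~\eqref{ineq:main} to the two equations $F_t(x,y)=0$ and $F_t(x,y)=\mu$ (with $\mu$ a root of unity), as already indicated in the text, and then attack the nontrivial equation via a good rational approximation to the roots of $f_t(X)=F_t(X,1)$. First I would dispose of $F_t(x,y)=0$: one shows $f_t(X)$ is irreducible over $\Qd$ for $|t|\geq 100$ (the polynomial is $X^4-tX^3-6X^2+tX+1$, and its Galois group over $\Q(t)$ is cyclic of order $4$; for $|t|$ large there is no rational or quadratic-over-$\Qd$ factorization), so the only solution is $(0,0)$. It also lets me assume in what follows that $x,y$ are both nonzero and, using the symmetry $F_t(X,Y)=F_t(-Y,X)$, that $|y|\leq |x|$.

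Next comes the heart of the argument, which is the passage through the approximation result. Factor $F_t(X,Y)=\prod_{j}(X-\alpha^{(j)} Y)$ over $\C$, where the $\alpha^{(j)}$ are the conjugates of a root $\alpha$ of $f_t$. If $(x,y)$ solves $F_t(x,y)=\mu$ with $|y|\leq|x|$, then $\prod_j |x-\alpha^{(j)}y| = |\mu| = 1$, so the factor closest to zero, say $|x-\alpha y|$, must be very small — roughly of size $|t|^{-2}$ after one checks that the other three factors are each of size comparable to $|t|$ (this uses the explicit shape of the roots: two of them are $O(|t|)$ and two are $O(|t|^{-1})$, paired by the symmetry). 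Dividing by $y$, this forces $|\alpha - x/y|$ to be extremely small, so $x/y$ is an exceptionally good approximation to the algebraic number $\alpha$. Then I would invoke Proposition~\ref{prop:approx}, the effective irrationality measure for $\alpha$ coming from the hypergeometric method (Padé approximants to $(1-z)^{1/2}$ or $\hf$), which says that for all $p,q\in\qi$ one has $|\alpha - p/q| > c(t)\,|q|^{-\kappa}$ with $\kappa < 2$ and an explicit constant $c(t)$. Combining the upper bound $|\alpha - x/y|\ll |t|^{-2}|y|^{-2}\cdot(\text{something})$ with this lower bound yields an inequality of the form $|y|^{2-\kappa} < (\text{explicit function of }|t|)$, which for $|t|\geq 100$ pins $|y|$ — hence $|x|$ — into a tiny, explicitly bounded range.

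The final step is to clear up that bounded range. For each of the finitely many possibilities for $(x,y)$ with $|y|$ small (and $|y|\leq|x|$, with $|x|$ then also bounded via $|x-\alpha y|$ small and $|\alpha|$ bounded on the relevant branch), one checks directly that $F_t(x,y)$ has absolute value $\leq 1$ only for the trivial tuples; because $t$ still ranges over infinitely many quadratic integers, this check has to be done symbolically in $t$ (e.g.\ $F_t(x,y)$ is an affine-linear polynomial in $t$ with Gaussian/quadratic-integer coefficients, and one bounds $|F_t(x,y)|$ from below by $|t|\cdot|\text{coefficient}| - |\text{rest}|$ unless that coefficient vanishes, which happens only in degenerate cases). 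This is where the bookkeeping — and the Sage computation advertised in the abstract — lives.

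I expect the main obstacle to be making Proposition~\ref{prop:approx} strong enough \emph{uniformly in $t$}: the hypergeometric method naturally produces an irrationality measure whose exponent $\kappa$ and constant $c(t)$ depend on how well the auxiliary Padé approximants separate the relevant pair of roots, and one needs $\kappa$ strictly below $2$ together with a constant that does not decay too fast in $|t|$, so that the threshold $|t|\geq 100$ actually suffices rather than, say, $|t|\geq 10^{6}$. Squeezing the constants — tracking the archimedean sizes of the numerators/denominators of the Padé approximants and the contribution of the prime $2$ (which is delicate in $\qi$ when $d\equiv 1,2\pmod 4$) — is the technically demanding part, and is presumably why the paper needs "a fair amount of computations."
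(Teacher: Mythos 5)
Your overall skeleton (reduce to $F_t=\mu$, exploit that $x/y$ is a good approximation to a root, invoke Proposition~\ref{prop:approx}, then handle the remaining range) is the right family of ideas, but the architecture of the paper's proof is different in a way that matters, and the quantitative guesses in your sketch would not close.

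The most serious gap is the missing \emph{lower} bound on $|y|$. You assert that comparing the approximation estimate with the irrationality measure pins $|y|$ into a ``tiny, explicitly bounded range'' which can then be cleared up symbolically. In fact, for $|t|\geq 100$ the paper's Proposition~\ref{prop:approx} only gives $\kappa<2.83$ (and the measure is $|\alpha-p/q|>c/(|t|\,|q|^{\kappa+1})$, so the exponent in $|q|$ is $\kappa+1<3.83$, not the $\kappa<2$ you posit). Combined with the genuine upper bound $|x-\alpha^{(j)}y|<8.86/(|t|\,|y|^{3})$ from Lemma~\ref{lem:betajsmall} (your ``$|t|^{-2}$'' heuristic for the small factor is off: the correct shape is $|t|^{-1}|y|^{-3}$, because of the four roots one is $\approx t$, one is $\approx -1/t$, and \emph{two} are $O(1)$, namely $\approx\pm1$), one gets $|y|^{3-\kappa}<137.16$ and hence only $|y|<3.74\cdot 10^{12}$. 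That is not a ``tiny'' range amenable to a direct or symbolic search. The contradiction in the paper comes from the other direction: Section~\ref{sec:lowerbound} establishes, by iterated Pad\'e approximations to $\alpha^{(0)}$ and $\alpha^{(3)}$, that a non-trivial solution of type $0$ or $3$ must satisfy $|y|>1.047\cdot 10^{13}$, and that lower bound is what collides with the upper bound $3.74\cdot 10^{12}$. Without this second ingredient your plan stalls at an astronomically large finite search.

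Two further misalignments. First, the small-solution enumeration in the paper is performed up front (Lemma~\ref{lem:smallSols}), only for $\min\{|x|,|y|\}<3$, and its purpose is to license the hypothesis $\min\{|x|,|y|\}\geq 3$ needed in Lemma~\ref{lem:typewlog} to reduce to solutions of type $0$ or $3$; it is not a post-processing step applied to solutions surviving the approximation argument. Second, your normalization ``$|y|\leq|x|$'' is not what the paper uses; the relevant symmetry reduction is by type (replacing $(x,y)$ by $(-y,x)$ swaps type $j$ with type $j+2\bmod 4$), which is why the analysis is done only for the two roots $\alpha^{(0)}$ and $\alpha^{(3)}$. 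If you revise your plan, you should add a Pad\'e-approximation argument that bootstraps $|y|>c_k^{-1}|t|^k$ for increasing $k$, and replace the intended enumeration of a bounded range by the resulting contradiction between the two bounds on $|y|$.
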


The typical strategy for solving Thue equations of the shape $F(X,Y) = c$ (whether with Baker's method or the hypergeometric method) uses the following fact: If $(x,y)$ is a solution to the Thue equation, then $x/y$ is a particularly good approximation to one of the roots of the corresponding univariate polynomial $f(X)=F(X,1)$.
We say that a solution is of type $j$, if it approximates the $j$-th root.

In our case, let us set
\[
	f_t(X)
	:= f_t(X,1)
	= X^4 -t X^3 - 6 X^2 + t X + 1
\]
and let $\alpha$ be a root of $f_t$. Then one can prove (in fact, this is how the simplest quartic forms were constructed in the first place) that the full set of roots of $f_t$ is given by
\[
	\alpha^{(0)} = \alpha, \quad
	\alpha^{(1)} = \frac{\alpha-1}{\alpha+1},\quad 
	\alpha^{(2)} = -\frac{1}{\alpha}, \quad
	\alpha^{(3)} = -\frac{\alpha+1}{\alpha -1}.
\]
This is further elaborated in the proof of Lemma~\ref{lem:irred}. Moreover, as we will see in Lemma~\ref{lem:roots-approx}, one of the roots is close to zero and we will set $\alpha^{(0)}$ to be that root. 
Proposition~\ref{prop:approx} gives us a strong bound for how good general approximations to $\alpha^{(0)}$ and $\alpha^{(3)}$ can get. The proof of Theorem~\ref{thm:main100} will mostly rely on the proposition. 
Note that we only consider approximations to two of the four roots. This is because we will prove in Lemma~\ref{lem:typewlog} that solutions of the other two types are equivalent to solutions of type $0$ or $3$.

\begin{prop}\label{prop:approx}
	Let $t \in \qi$ with $|t|\geq 100$. Let $\alpha^{(0)}=\alpha$ and $\alpha^{(3)}$ be two roots of $f_t(X)$ such that $\alpha$ is the unique root with $|\alpha|\leq 1/4$ and $\alpha^{(3)}=-\frac{\alpha + 1}{\alpha - 1}$. 
Then for any $p,q\in \qi$ with $|q|\geq 0.28|t|$ we have 
\begin{align*}
		\left| \alpha^{(j)} - \frac{p}{q} \right|
	> \frac{1}{15.48 |t| |q|^{\kappa + 1}},
	\quad \text{with} \quad 
	\kappa = \frac{\log |t| + 1.08}
				  {\log |t| - 2.59}
\end{align*}
for $j \in \{0, 3 \}$.
In particular, since $|t|\geq 100$, we have  
\[
	\kappa < 2.83.
\]
\end{prop}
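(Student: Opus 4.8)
The plan is to use the classical hypergeometric method (in the form developed by Thue–Siegel and refined by Baker, Chudnovsky, and in the relative setting by Lettl–Pethő–Voutier and Heuberger). The starting observation is that the two roots $\alpha^{(0)}$ and $\alpha^{(3)}$ of $f_t$ are related by the Möbius transformation $z\mapsto -\tfrac{z+1}{z-1}$, and — more importantly — that $f_t$ is a ``simplest quartic'' whose roots can be written explicitly in terms of a single algebraic quantity. Concretely I would first produce a \emph{Padé-type approximation}: since $\alpha$ satisfies $\alpha^4 - t\alpha^3 - 6\alpha^2 + t\alpha + 1 = 0$, one can express $\alpha$ (up to a Möbius change of variables that sends the relevant pair of roots to a symmetric pair) as a value of an algebraic function of the form $(1-z)^{1/2}$ or $\sqrt[4]{\cdot}$ at an argument $z$ that is small, of size roughly $|t|^{-2}$. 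The hypergeometric functions $\hf(-n,\,-n-\nu;\,1-\nu;\,z)$ then furnish, for every $n$, a pair of polynomials $P_n(z), Q_n(z) \in \Q[z]$ with $P_n(z) - (1-z)^\nu Q_n(z) = z^{2n+1} R_n(z)$, where $R_n$ is again given by a hypergeometric series. Substituting the relevant value of $z$ produces, for each $n$, an approximation $p_n/q_n$ to $\alpha^{(j)}$ with good numerator and denominator bounds and a controlled error term.

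The second step is to estimate the three ingredients that feed into the Thue–Siegel exclusion argument: (a) an upper bound for $|q_n|$ (the common denominator after clearing), (b) an upper bound for the remainder $|p_n - \alpha^{(j)} q_n|$, and (c) a lower bound ensuring the approximations $p_n/q_n$ are pairwise distinct (so that two consecutive ones cannot both be bad approximations to a given $p/q$). For (a) and (b) one uses standard bounds on binomial coefficients and on the tail of the hypergeometric series; the factor $|t|$ enters through the size of $z \asymp |t|^{-2}$ and through the content of the polynomials $P_n, Q_n$ (powers of small primes dividing the relevant denominators, controlled by a Chudnovsky-type lemma on the arithmetic of $\hf$). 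These are the estimates that will eventually yield the constants $1.08$, $2.59$, $15.48$ and the threshold $0.28|t|$. The exponent $\kappa$ comes out as the ratio $\frac{\log(\text{growth of }|p_n - \alpha q_n|^{-1})}{\log(\text{growth of }|q_n|)}$ in the limit, which is exactly of the form $\frac{\log|t| + c_1}{\log|t| - c_2}$.

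The third step is the exclusion argument itself: given $p,q \in \qi$ with $|q| \geq 0.28|t|$, choose $n$ so that $|q_n|$ is comparable to a suitable power of $|q|$; if $|\alpha^{(j)} - p/q|$ were too small, then $p/q$ would be too close to both $p_n/q_n$ and $p_{n+1}/q_{n+1}$, forcing $|q_n p_{n+1} - q_{n+1} p_n|$ (a nonzero algebraic integer, hence of absolute value $\geq 1$ — this is where working over an imaginary quadratic field is essential) to be smaller than $1$, a contradiction. Tracking the inequalities through this pigeonhole step gives the stated lower bound. Finally, the remark that $\kappa < 2.83$ for $|t| \geq 100$ is a direct monotonicity check: $\kappa$ is decreasing in $|t|$, so it suffices to evaluate at $|t| = 100$, giving $\frac{\log 100 + 1.08}{\log 100 - 2.59} = \frac{4.605\ldots + 1.08}{4.605\ldots - 2.59} < 2.83$.

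\medskip

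I expect the main obstacle to be step two, specifically the arithmetic part of (a): getting a sharp enough upper bound on the denominators introduced by the hypergeometric coefficients (the ``$\mathrm{denom}(P_n,Q_n)$'' factor) so that the resulting $\kappa$ stays below the value ($\kappa < 3$) needed for the Thue equation application in Theorem~\ref{thm:main100}. This requires a careful Chudnovsky-style analysis of which primes divide the coefficients of $\hf(-n,-n-\tfrac12;\tfrac12;z)$ and to what power, combined with an explicit prime-counting estimate; it is the step where the numerical constants are most delicate and where a loss would propagate to a worse exponent. The analytic estimate (b) and the pigeonhole step three are comparatively routine once the approximating forms are in hand.
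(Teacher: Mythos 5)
Your proposal takes essentially the same hypergeometric route as the paper's Section~\ref{sec:proofOfProp}: Pad\'e-type approximations built from $\hf$, clearing of denominators, the three estimates you list, and the Thue--Siegel exclusion lemma (Lemma~\ref{lem:approx-irrat}). Three remarks. (i) The paper does not start from a hand-built radical expression for $\alpha$; it invokes Thue's Fundamentaltheorem (Lemma~\ref{lem:Thue}) with the square-free quadratic $\pU = X^2+1$ solving $\pU f_t'' - 3\pU' f_t' + 6\pU'' f_t = 0$, and this mechanically produces the approximating polynomials $\pA_r,\pB_r$ and all the auxiliary data $\pa,\ldots,\pz,\pw$; the radical identity $\alpha = i\,(w^{1/4}-1)/(w^{1/4}+1)$ with $w=(it-4)/(it+4)$ then emerges as a by-product of evaluating at $\xi=0$, rather than being the starting point. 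This is a packaging difference, but it replaces an ad hoc radical reduction by a standardized construction. (ii) The ``main obstacle'' you anticipate --- a Chudnovsky-style prime analysis of the denominators of $\chi_{4,r}$ --- is already packaged in the literature: the paper simply quotes Lettl--Peth\H{o}--Voutier's explicit estimates on $\Delta_{4,r}/N_{4,r}$ (Lemma~\ref{lem:denominators}), so this step is far less delicate than you expect and no new arithmetic of hypergeometric coefficients is required. (iii) A few minor slips: the relevant parameter is $\nu=1/4$ (fourth roots, $\chi_{4,r}=\hf(-r,-r-\tfrac14;\tfrac34;\cdot)$), not $\nu=1/2$; the hypergeometric argument $w$ satisfies $|1-w|\approx 8/|t|$, not $|t|^{-2}$ (the $|t|^{-2}$-per-step gain comes from the power $|w-1|^{2r+1}$ in the remainder, not from the size of the argument); and in the exclusion step the nonzero algebraic integer one bounds below by $1$ is $qp_r - pq_r$, not $p_r q_{r+1}-p_{r+1}q_r$ --- the nonvanishing of the latter (Lemma~\ref{lem:distinctApprox}) is only needed to guarantee the former is nonzero for at least one of two consecutive values of~$r$.
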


The main strategy for proving Theorem~\ref{thm:main100} will be the following: We compare the lower bound from Proposition~\ref{prop:approx} (which is of the shape $c/(|t| \cdot |y|^{\kappa + 1})$) with an upper bound, which will be of the shape $c/(|t| \cdot |y|^4)$. In other words, we will end up with an inequality of the shape $1/(|t| \cdot |y|^{\kappa + 1}) \ll 1/(|t| \cdot |y|^4)$. This inequality implies an absolute upper bound on $|y|$ as soon as $\kappa<3$.

Now note that $\kappa$ can actually get arbitrarily close to $1$, which means that we are ``wasting'' powers of $|y|$ in our proof.
The next two Corollaries better show the actual power of Proposition~\ref{prop:approx}. We call them Corollaries, because in contrast to Theorem~\ref{thm:main100} they follow relatively quickly from Proposition~\ref{prop:approx}. This is because we exclude small solutions in the statements, whereas for Theorem~\ref{thm:main100} we have to prove lower bounds for $|y|$. The authors don't know how one could prove such general lower bounds in the context of the Corollaries.

\begin{cor}\label{cor:lin}
Let $C>0$ be given. Then there exist effectively computable constants $C_0 >0$ and $t_0\geq 100$, both depending on $C$, such that the following statement holds:
For any square-free integer $d$ and any $t\in \qi$ with $|t|\geq t_0$ the inequality 
\begin{equation*}\label{ineq:cor-lin}
	|F_t(X,Y)|\leq C |t|
	\quad \text{in } X,Y \in \qi
\end{equation*}
has no solutions $(x,y)$ with $\min\{|x|,|y|\}\geq C_0$, except solutions of the shape $(x,\pm x)$ with $|x| \leq (C|t|/4)^{1/4}$.
\end{cor}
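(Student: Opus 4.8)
The plan is to pit the approximation bound of Proposition~\ref{prop:approx} against the trivial upper bound coming from the Thue inequality, as sketched after Theorem~\ref{thm:main100}, but now keeping track of the \emph{size} of the solution instead of proving a lower bound for $|y|$. Write $\beta = x/y$ and use the factorization $F_t(X,Y) = \prod_{j=0}^{3}(X - \alpha^{(j)}Y)$, so that $|F_t(x,y)| = |y|^4\prod_{j=0}^{3}|\beta - \alpha^{(j)}|$ whenever $y\neq 0$; also record the identity $F_t(x,\pm x) = -4x^4$, which is what produces the exceptional family of solutions and which is preserved by the symmetries of $F_t$. If $xy = 0$ then $\min\{|x|,|y|\} = 0$, so (taking $C_0 \geq 1$) we may assume $xy \neq 0$; by Lemma~\ref{lem:typewlog} and the symmetries of $F_t$ we may moreover assume, after replacing $(x,y)$ by an equivalent solution, that it has type $0$ or type $3$, since those symmetries only permute $\pm x,\pm y$ and hence change neither $\min\{|x|,|y|\}$ nor the excepted shape $(x,\pm x)$. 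Using Lemma~\ref{lem:roots-approx} (the roots are $\alpha^{(0)}=\alpha$ with $|\alpha|$ only slightly larger than $1/|t|$, $\alpha^{(1)}\approx -1$, $\alpha^{(3)}\approx 1$ and $\alpha^{(2)}=-1/\alpha$ with $|\alpha^{(2)}|\approx |t|$) together with the minimality defining the type — each far factor satisfies $|\beta - \alpha^{(k)}| \geq \tfrac12|\alpha^{(j)} - \alpha^{(k)}|$ — one bounds $\prod_{k\neq j}|\beta - \alpha^{(k)}| \gg |t|$ with an absolute implied constant, so that
\[
	\left| \beta - \alpha^{(j)} \right| \leq \frac{c_1 C}{|y|^4},
\]
where $j\in\{0,3\}$ is the type of $(x,y)$ and $c_1$ is absolute.

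Now split on the size of $|y|$. If $|y| \geq 0.28|t|$, Proposition~\ref{prop:approx} applies with $(p,q)=(x,y)$ and, combined with the bound above, forces $|y|^{3-\kappa} < 15.48\,c_1C\,|t|$; feeding back $|y| \geq 0.28|t|$, using $1 < \kappa < 2.83$, and using that $2-\kappa = (\log|t|-6.26)/(\log|t|-2.59)$ is positive and satisfies $|t|^{2-\kappa}\to\infty$ as $|t|\to\infty$, one obtains $|t|^{2-\kappa} \ll_C 1$, which fails once $|t| \geq t_0(C)$ for a suitable effectively computable $t_0 \geq 100$. Hence there is no solution in this range. In the type $0$ case this range is actually forced: from $|x|\geq 1$ we get $|\beta|\geq 1/|y|$, while $|\beta|\leq |\alpha| + c_1C/|y|^4$, and once $C_0$ is large this yields $|y| \geq 1/(2|\alpha|)$, which exceeds $0.28|t|$ because $|\alpha|$ is barely larger than $1/|t|$. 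So a type $0$ solution with $\min\{|x|,|y|\}\geq C_0$ falls entirely into the above contradiction.

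If instead $|y| < 0.28|t|$, the solution must therefore be of type $3$, and then
\[
	|x-y| = |y| \left| \beta - 1 \right|
	\leq |y| \left( \left| \beta - \alpha^{(3)} \right| + \left| \alpha^{(3)} - 1 \right| \right)
	\leq \frac{c_1 C}{|y|^3} + \frac{c_2\,|y|}{|t|}
	< \frac{c_1 C}{C_0^3} + 0.28\, c_2 ,
\]
where $|\alpha^{(3)}-1| = 2|\alpha|/|\alpha-1| \leq c_2/|t|$ with $c_2$ absolute and $0.28\,c_2 < 1$ (as $\alpha \approx -1/t$). Choosing $C_0$ large in terms of $C$ and $c_1,c_2$ makes the right-hand side $< 1$, so $x = y$; thus $(x,y)$ is of the excepted shape $(x,\pm x)$, and $4|x|^4 = |F_t(x,x)| \leq C|t|$ gives $|x| \leq (C|t|/4)^{1/4}$. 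Taking $C_0$ and $t_0$ as above — both depending only on $C$ and the absolute constants, hence effectively computable — and transporting the conclusion back along the equivalence of Lemma~\ref{lem:typewlog} (the argument being uniform in $d$) completes the proof.

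I expect the only genuine obstacle to be the numerical bookkeeping. One must verify, from Lemma~\ref{lem:roots-approx}, that the implied constants in $|\alpha| \ll 1/|t|$ and $|\alpha^{(3)}-1| \ll 1/|t|$ are small enough that $0.28$ times them is $< 1$ — this is what makes the small-$|y|$ branch pin the solution to $x=y$ and rules out type $0$ there — and that $|x|\geq 1$ really forces $|y| \geq 0.28|t|$ in the type $0$ case; since $\alpha = -1/t + O(1/|t|^3)$ there is comfortable room, but the inequalities have to be carried out. The other point requiring care is making the dependence of $t_0$ on $C$ explicit from the growth of $|t|^{2-\kappa}$, which is routine given the closed form for $\kappa$.
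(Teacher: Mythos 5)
Your proposal is correct and follows essentially the paper's own route: reduce to types $0$ and $3$, derive $|y| \gg |t|$ from $|x|\geq 1$ (type $0$) or $|x-y|\geq 1$ (type $3$), and play Proposition~\ref{prop:approx} against the upper bound $|\beta-\alpha^{(j)}|\ll C/|y|^4$ to force $|t|^{2-\kappa}\ll_C 1$; the paper packages the intermediate steps as the generalized Lemmas~\ref{lem:betajsmall-star}, \ref{lem:typewlog-star} and \ref{lem:lbfory-star} and peels off $y=\pm x$ up front, whereas you split on the size of $|y|$ and recover $x=y$ as the only survivor in the small-$|y|$, type-$3$ branch, which is a harmless reorganization. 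One small point of hygiene: Lemma~\ref{lem:typewlog} as stated applies to the equation $F_t(x,y)=\mu$ and not literally to the inequality $|F_t(x,y)|\leq C|t|$, so the type-swap is really justified by your own inline bound $|\beta-\alpha^{(j)}|\ll C/|y|^4$ (which is precisely what the paper's Lemma~\ref{lem:typewlog-star} supplies).
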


\begin{cor}\label{cor:eps}
Let $0 < \eps < 1$ be given. Then there exists an effectively computable constant $t_0\geq 100$ depending on $\eps$, such that the following statement holds:
For any square-free integer $d$ and any $t\in \qi$ with $|t|\geq t_0$ the inequality 
\begin{equation*}
	|F_t(X,Y)|\leq |t|^{2-\eps}
	\quad \text{in } X,Y \in \qi
\end{equation*}
has no solutions $(x,y)$ with $\min\{|x|,|y|\} > (|t|^{2-\eps}/4)^{1/4}$.
\end{cor}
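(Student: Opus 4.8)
The plan is to run the standard ``gap versus approximation'' argument against Proposition~\ref{prop:approx}, supplemented by a trivial integrality estimate in the range of $|y|$ where that proposition does not apply. Write $m:=|t|^{2-\eps}$ and $M:=(m/4)^{1/4}$, let $(x,y)\in\qi$ satisfy $|F_t(x,y)|\leq m$, and suppose towards a contradiction that $\min\{|x|,|y|\}>M$; the constant $t_0=t_0(\eps)$ will be fixed at the end. By Lemma~\ref{lem:typewlog} we may assume $(x,y)$ is of type $j\in\{0,3\}$, and since all solutions equivalent to $(x,y)$ share the unordered pair $\{|x|,|y|\}$, the hypothesis $\min\{|x|,|y|\}>M$ survives this reduction. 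If $y=0$ then $\min\{|x|,|y|\}=0\leq M$, a contradiction, so $y\neq0$ and $\beta:=x/y$ is defined and is, by definition of the type, closer to $\alpha^{(j)}$ than to any other root of $f_t$. Hence $|\beta-\alpha^{(i)}|\geq\tfrac12|\alpha^{(j)}-\alpha^{(i)}|$ for $i\neq j$, and using the root locations from Lemma~\ref{lem:roots-approx} (two of the three differences $|\alpha^{(j)}-\alpha^{(i)}|$ are $\gg1$ and the third is $\gg|t|$) one gets the \emph{product bound}
\[
	\prod_{i\neq j}|x-\alpha^{(i)}y|\;=\;|y|^3\prod_{i\neq j}|\beta-\alpha^{(i)}|\;\geq\;c_1\,|t|\,|y|^3
\]
for an absolute $c_1>0$, valid for $|t|\geq100$. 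Lemma~\ref{lem:roots-approx} also gives $|\alpha^{(0)}|\leq c_2/|t|$ and $|\alpha^{(3)}-1|\leq c_2/|t|$ for an absolute $c_2$.

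\textbf{Regime $|y|\geq0.28\,|t|$.} Here Proposition~\ref{prop:approx} applies with $p=x$, $q=y$. From $|F_t(x,y)|=|x-\alpha^{(j)}y|\prod_{i\neq j}|x-\alpha^{(i)}y|\leq m$ and the product bound, $|x-\alpha^{(j)}y|\leq m/(c_1|t||y|^3)$, hence $|\alpha^{(j)}-\beta|\leq m/(c_1|t||y|^4)$. Comparing with the lower bound $|\alpha^{(j)}-\beta|>1/(15.48\,|t|\,|y|^{\kappa+1})$ of Proposition~\ref{prop:approx} gives $|y|^{3-\kappa}<(15.48/c_1)\,|t|^{2-\eps}$. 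Since $\kappa=(\log|t|+1.08)/(\log|t|-2.59)$ tends to $1$ as $|t|\to\infty$, we may (effectively) enlarge $t_0$ so that $\kappa<1+\eps/2$; then $3-\kappa>2-\eps/2$ and, as $|y|\geq0.28|t|\geq1$, we get $|y|^{2-\eps/2}<(15.48/c_1)|t|^{2-\eps}$, i.e.\ $|y|\leq c_3\,|t|^{(2-\eps)/(2-\eps/2)}$. The exponent lies strictly between $0$ and $1$, so for $|t|\geq t_0$ this contradicts $|y|\geq0.28|t|$. Thus no solution of type $0$ or $3$ has $|y|\geq0.28|t|$.

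\textbf{Regime $|y|<0.28\,|t|$.} Now Proposition~\ref{prop:approx} is unavailable, and I would replace its lower bound by the observation that a suitable $\qi$-linear form in $x,y$ is either zero (a degenerate solution) or has absolute value $\geq1$. Put $L:=x$ if $j=0$ and $L:=x-y$ if $j=3$; then $x-\alpha^{(j)}y=L-\delta y$ with $\delta\in\{\alpha^{(0)},\,\alpha^{(3)}-1\}$, so $|\delta y|\leq c_2|y|/|t|<0.28\,c_2$. If $L=0$, then either $x=0$, giving $\min\{|x|,|y|\}=0\leq M$, or $j=3$ and $x=y$, in which case $F_t(x,x)=-4x^4$ forces $4|x|^4\leq m$, i.e.\ $\min\{|x|,|y|\}=|x|\leq M$; both contradict the hypothesis. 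Otherwise $L\in\qi\setminus\{0\}$, so $|L|\geq1$ and $|x-\alpha^{(j)}y|\geq1-0.28\,c_2\geq c_4>0$ (with $t_0\geq100$ this $c_4$ is absolute). Combining with the product bound, $m\geq|F_t(x,y)|\geq c_4\,c_1\,|t|\,|y|^3$, whence $|y|^3\leq(c_1c_4)^{-1}|t|^{1-\eps}$ and therefore $\min\{|x|,|y|\}\leq|y|\leq c_5\,|t|^{(1-\eps)/3}$. Since $(1-\eps)/3<(2-\eps)/4$ (equivalently $4(1-\eps)<3(2-\eps)$), enlarging $t_0$ once more yields $\min\{|x|,|y|\}<(m/4)^{1/4}=M$ for $|t|\geq t_0$, contradicting the hypothesis. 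This completes the argument; the implied constants are absolute, and $t_0$ is fixed by finitely many effective inequalities of the shape $|t|\geq\exp(c/\eps)$ and $|t|\geq c^{\,1/\theta(\eps)}$, hence is effectively computable from $\eps$.

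\textbf{Main obstacle.} The only genuinely new difficulty compared with Theorem~\ref{thm:main100} is the regime $|y|<0.28|t|$, which Proposition~\ref{prop:approx} cannot see: one must recognise that the analytic gap principle is there replaced by the elementary bound ``a nonzero element of $\qi$ has absolute value $\geq1$'' applied to $x$ or to $x-y$, and that the degenerate case in which that form vanishes (namely $x=0$, $y=0$, or $x=\pm y$) produces precisely the diagonal solutions, which automatically satisfy $\min\{|x|,|y|\}\leq(|t|^{2-\eps}/4)^{1/4}$ and so are not counterexamples. The rest is exponent bookkeeping — checking that $\kappa$ can be pushed below $1+\eps/2$ and that $(1-\eps)/3<(2-\eps)/4$ — which is what pins down $t_0$ as an explicit function of $\eps$.
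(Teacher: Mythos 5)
Your proof is correct and uses the same ingredients as the paper's argument --- Proposition~\ref{prop:approx}, the conjugate-product lower bound (the content of Lemma~\ref{lem:betajsmall-star}), and the nonvanishing-integer trick (the content of Lemma~\ref{lem:lbfory-star}) --- except that you bifurcate explicitly on $|y|\gtrless 0.28\,|t|$, while the paper combines Lemma~\ref{lem:lbfory-star} with the hypothesis $\min\{|x|,|y|\}>(|t|^{2-\eps}/4)^{1/4}$ to deduce $|y|>0.31\,|t|$ once and for all before applying Proposition~\ref{prop:approx}; both organizations are sound. One minor citation slip: in the type-reduction step you should invoke Lemma~\ref{lem:typewlog-star} (valid for the inequality $|F_t|\leq Q$) rather than Lemma~\ref{lem:typewlog} (stated only for the equation $F_t=\mu$ with $\min\{|x|,|y|\}\geq 3$), the hypothesis of the former being guaranteed by $\min\{|x|,|y|\}>(|t|^{2-\eps}/4)^{1/4}$ once $|t|$ is large.
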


In Table~\ref{tab:overview} we give an overview of the rest of the paper. Note that we focus on proving Theorem~\ref{thm:main100} throughout the paper. Only in the very last section, we generalize some of the previous results and prove the two Corollaries.

\begin{table}[h]
\caption{Overview of paper}\label{tab:overview}
\begin{tblr}{cl}
\hline
Section                          & goal \\ \hline
	\ref{sec:irred} 
	& Determine all $t$'s for which $f_t(X)$ is reducible; solve $F_t(X,Y)=0$. 
	\\
	\ref{sec:smallsols}
	& Find all solutions to $F_t(X,Y)=\mu$ with $\min\{|x|,|y|\} < 3$.                                                                     	\\
	\ref{sec:approx}
	& Find approximations to the roots of $f_t(X)$, e.g.\ $\alpha = -1/t + L(5.01 |t|^{-3})$.
	\\ 
	\ref{sec:types}
	&  \begin{tabular}[c]{@{}l@{}}
	Establish the upper bound $|x - \alpha^{(j)}y| < c |t|^{-1} |y|^{-3}$ for a solution of type $j$;\\
	 a solution $(x,y)$ is of type $j$ if and only if $(-y,x)$ is of type $j+2 \pmod{4}$.
	 \end{tabular}
	\\
	\ref{sec:lowerbound}
	& Establish lower bounds of the shape $|t|^k/c < |y|$ using Padé approximations.
	\\
	\ref{sec:hypergeom}
	& Provide known tools and describe hypergeometric method.
	\\
	\ref{sec:proofOfProp}
	& Prove Proposition~\ref{prop:approx} with the hypergeometric method.
	\\
	\ref{sec:proofOfThm}
	& \begin{tabular}[c]{@{}l@{}}
	Prove Theorem~\ref{thm:main100} by combining Proposition~\ref{prop:approx} and the lower bounds \\
	from Sections \ref{sec:types} and \ref{sec:lowerbound}.
	 \end{tabular}
	\\
	\ref{sec:cors}
	& \begin{tabular}[c]{@{}l@{}}
	Prove Corollaries \ref{cor:lin} and \ref{cor:eps} by generalizing results from Sections \ref{sec:types} and \ref{sec:lowerbound}\\
	 and combining them with Proposition~\ref{prop:approx}.
	\end{tabular}
	\\
\hline
\end{tblr}
\end{table}

Finally, let us note that one could in principle solve inequality~\eqref{ineq:main} completely, i.e.\ also for parameters $|t|<100$, using Baker's method. 
However, in the quartic case it is not completely obvious how to find good general independent units, so solving the large number of equations requires some extra attention. Since the present paper is already rather long, the resolution for $|t|\leq 100$ is planned for future work. Some solutions for small $t$ are mentioned in Remark~\ref{rem:smallSols}.

\section{Irreducibility and solution of $F_t(x,y)=0$}\label{sec:irred}

First, we determine for which parameters $t$ the polynomial $f_t$ is reducible over $\Qd$. Note that for solving $F_t(X,Y)=0$, we only need to know whether $f_t(X)$ has a root in $\Qd$, but reducibility may be of independent interest.

\begin{lem}\label{lem:irred}
Let $d$ be a positive square-free integer and $t \in \qi$. Then the polynomial
\[
	f_t(X)
	= X^4 -t X^3 - 6 X^2 + t X + 1
\]
is reducible over $\Q(\sqrt{-d})$ if and only if
\[
	t \in 
	\pm \{ 
	0,
	3,
	3i \pm 1,
	4i,
	5i,
	3 \sqrt{-2},
	2 \sqrt{-3},
	\frac{5 \sqrt{-3} \pm 3}{2},
	\sqrt{-7},
	\frac{3 \sqrt{-7} \pm 1}{2},
	\sqrt{-15}
	\}.
\]
Moreover, the polynomial $f_t(X)$ has a root in $\Qd$ if and only if $t=\pm 4i$.
\end{lem}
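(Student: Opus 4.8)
The plan is to split the analysis according to the possible factorization types of a quartic over the field $\Qd$: either $f_t$ has a linear factor (a root in $\Qd$), or it factors as a product of two irreducible quadratics. I would begin by recording the symmetry structure of the roots. If $\alpha$ is a root of $f_t$, then a direct computation shows that $\frac{\alpha-1}{\alpha+1}$, $-\frac1\alpha$ and $-\frac{\alpha+1}{\alpha-1}$ are also roots; this is the orbit of $\alpha$ under the order-$4$ Möbius transformation $\alpha\mapsto\frac{\alpha-1}{\alpha+1}$. From this one reads off the only two possible quadratic splittings: the factor pairing $\{\alpha,-1/\alpha\}$ with $\{\alpha^{(1)},\alpha^{(3)}\}$, and the pairing $\{\alpha,\alpha^{(1)}\}$ with $\{\alpha^{(3)},-1/\alpha\}$ (the remaining pairing $\{\alpha,\alpha^{(3)}\}$, $\{\alpha^{(1)},-1/\alpha\}$ gives the same as one of these after relabeling). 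Computing the elementary symmetric functions of each pair expresses the would-be quadratic factors in terms of a single quantity like $s=\alpha-1/\alpha$ or $s=\alpha+\alpha^{(1)}$, and matching coefficients with $f_t$ yields a polynomial (in fact quadratic) relation for that quantity whose coefficients are polynomials in $t$.

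Concretely, for the first pairing one finds $f_t(X)=(X^2-sX-1)(X^2-s'X-1)$ with $s+s'=t$ and $ss'=-4$, so $s,s'$ are the roots of $Z^2-tZ-4=0$; this factorization is defined over $\Qd$ exactly when $t^2+16$ is a square in $\Qd$, i.e.\ when $t^2+16=w^2$ for some $w\in\qi$. For the second pairing a similar computation produces a condition of the form ``$g(t)$ is a square in $\qi$'' for an explicit low-degree $g$. So the whole problem reduces to finitely many Diophantine conditions: (i) $t^2+16$ is a square, (ii) the analogous square condition from the other pairing, and (iii) $f_t$ has a root $\rho\in\Qd$, which via the root formulas forces $\rho$ to be an algebraic integer with $\rho,-1/\rho\in\qi$, hence $\rho$ a unit, hence $\rho\in\{\pm1,\pm i,\pm\zeta_6,\pm\zeta_6^2\}$; plugging each unit into $f_t(\rho)=0$ and solving the resulting linear equation in $t$ gives a finite explicit list and shows a root in $\Qd$ occurs only for $t=\pm4i$ (from $\rho=\pm i$), proving the ``moreover'' clause.

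It remains to solve the square conditions. For (i), writing $t=\frac{a+b\sqrt{-d}}{2}$ (with $a\equiv b\pmod2$ in the relevant cases) and $w=\frac{u+v\sqrt{-d}}{2}$, the equation $t^2+16=w^2$ becomes $(w-t)(w+t)=16$, a factorization of $16$ in the ring $\qi$. Since $16$ has only finitely many divisor pairs up to units in any imaginary quadratic $\qi$ (the norm is bounded), one enumerates all factorizations $16=\delta_1\delta_2$, sets $w-t=\delta_1$, $w+t=\delta_2$, solves $t=(\delta_2-\delta_1)/2$, and checks which resulting $t$ actually lie in $\qi$; this produces the values with a $\sqrt{-1}$, $\sqrt{-2}$, $\sqrt{-7}$, $\sqrt{-15}$ part in the list. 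The same divisor-enumeration handles condition (ii) and yields the values involving $3$, $\sqrt{-3}$, $\sqrt{-7}$. Collecting all $t$ from (i), (ii), (iii) and discarding duplicates gives exactly the stated set.

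The main obstacle I anticipate is bookkeeping rather than conceptual depth: one must be careful that $\qi$ is $\Z[\sqrt{-d}]$ only when $d\equiv1,2\pmod4$ and $\Z[\frac{1+\sqrt{-d}}2]$ when $d\equiv3\pmod4$, so the ``$t=\frac{a+b\sqrt{-d}}2$'' parametrization and the divisibility/parity constraints on $a,b$ differ by case; and one must make sure that a factorization of $16$ (or of $g(t)$) producing a valid $t\in\qi$ does not secretly force $f_t$ to also acquire a linear factor or to be a perfect square, which would need separate treatment. Checking that none of the listed $t$ were missed — e.g.\ verifying the two square conditions are genuinely the only obstructions to a quadratic splitting, so that for all other $t$ the polynomial is irreducible — is the step that warrants the explicit Sage verification the authors mention. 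I would organize the proof as: (1) root symmetries; (2) the two quadratic-splitting square conditions; (3) the linear-factor/unit analysis; (4) enumerate divisors of $16$ and of $g(t)$; (5) assemble the list and confirm completeness by the provided computation.
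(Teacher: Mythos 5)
Your plan follows the same architecture as the paper's proof (root orbit under the order-$4$ M\"obius map, separate treatment of a linear factor via the unit argument, and a quadratic-into-quadratic splitting reduced to a finite Diophantine enumeration). However, there is one genuine misconception. You treat the pairing $\{\alpha,\alpha^{(1)}\}$ with $\{\alpha^{(3)},-1/\alpha\}$ as producing a second square condition with its own contribution to the list (``yields the values involving $3$, $\sqrt{-3}$, $\sqrt{-7}$''). In fact this pairing is simply \emph{impossible} over $\Qd$, and the paper disposes of it cleanly: if $\alpha$ and $(\alpha-1)/(\alpha+1)$ were the two roots of an irreducible factor $X^2+aX+b$ over $\Qd$, then the constant term would be $b=\alpha(\alpha-1)/(\alpha+1)$, hence $\alpha^2-(b+1)\alpha-b=0$; so $\alpha$ would satisfy two distinct monic quadratics over $\Qd$ (they differ in the constant term, $b$ vs.\ $-b$), forcing $b=0$, which is absurd since $f_t(0)=1$. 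No divisor-enumeration is needed there. All of $t\in\pm\{0,3,3i\pm1,\dots,\sqrt{-15}\}$ arise from the single pairing $\{\alpha,-1/\alpha\}$, for which your $s+s'=t$, $ss'=-4$ computation is correct and equivalent to the paper's $a+c=-t$, $ac=-4$; for example $t=3$ comes from $s=4$, $s'=-1$, and $t=2\sqrt{-3}$ from $s,s'=\sqrt{-3}\pm1$, both in this pairing.

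Two smaller remarks. First, your reformulation of the Vieta condition as ``$t^2+16=w^2$'' and enumeration of factorizations $(w-t)(w+t)=16$ is a valid alternative to the paper's route (which instead notes $a,c$ are roots of $X^2+tX-4$, hence in $\qi$, and bounds $|a|\le4$ from $N(a)N(c)=16$); the two give the same finite search, though you should justify that $(t\pm w)/2$ lands in $\qi$ — easiest is to argue directly with $s=(t+w)/2$, which is integral as a root of a monic polynomial over $\qi$. Second, in the unit step you should explicitly verify that for $t=\pm4i$ the polynomial is $(X\mp i)^4$ (so reducible with a root), and that for $\rho\in\{\pm1,\pm\zeta_6,\pm\zeta_6^2\}$ one gets either $f_t(\rho)\ne0$ identically or a non-integral $t$; this matches the paper. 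With the correction about the impossible pairing, your outline would reproduce the paper's proof.
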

\begin{proof}
First, note that for $t = \pm 4i$ we have $f_t(X) = (X\mp i)^4$. Let us from now on assume $t \neq \pm 4i$.

Next, we determine the shape of the roots of $f_t$.
Let $\phi$ be the rational map $\phi \colon z \mapsto (1-z)/(1+z)$. 
We have $\phi^4 = \id$ and one can check by a straight forward computation that if $\alpha$ is a root of $f_t$, 
then also $\phi(\alpha)$ is a root of $f_t$.
Thus we have the roots 
\begin{equation}\label{eq:roots}
	\alpha, \quad
	\phi(\alpha)=\frac{\alpha-1}{\alpha+1},\quad 
	\phi^2(\alpha)=-\frac{1}{\alpha}, \quad
	\phi^3(\alpha)=-\frac{\alpha+1}{\alpha -1}.
\end{equation}
Now we check whether these roots are pairwise distinct. Since $\phi$ is cyclic of order 4, it suffices to check if $\alpha \neq \phi^2(\alpha)$. Assume that $\alpha = \phi^2(\alpha) = -1/\alpha$. Then $\alpha = \pm i$. Plugging into $f_t(X)$, we obtain $0 = f_t(\pm i) = 8 \pm 2 t i$,
which implies $t = \pm 4i$, which we excluded. Thus, we may from now on assume that $\alpha \neq \pm i$ and that the four roots of $f_t$ can be written as in \eqref{eq:roots}. 

Next, we check that $f_t$ has no roots in $\Qd$. Assume that $\alpha \in \Qd$. Then since $\phi$ is a rational map, all roots lie in $\Qd$. Moreover, being roots of the monic polynomial $f_t$, they are all algebraic integers. Since $\qi$ is integrally closed, all roots in fact lie in $\qi$. But $\alpha\in \qi$ and $-1/\alpha\in \qi$ implies that $\alpha$ is a unit in $\qi$. Therefore, we only need to check all possible units. 
Units in imaginary quadratic integer rings are always roots of unity, so we only need to check if $\alpha \in \{\pm 1, \pm i, \pm \zeta_6, \pm \zeta_6^2\}$ can be a root of $f_t$. The values $\pm 1$ are not possible because $f_t(\pm 1) = -4\neq 0$. The case $\alpha = \pm i$ has already been handled above. For $\alpha = \pm \zeta_6$, we plug into $f_t$ and solving $f_t(\zeta_6)=0$ for $t$, we obtain that $t= \pm7\sqrt{-3}/3$, which is not an imaginary quadratic integer. Also $f_t(\pm\zeta_6^2)=0$ implies that $t = \pm 7\sqrt{-3}/3$, which we are not interested in. Thus, $f_t$ has a root in $\Qd$ if and only if $t=\pm 4i$.

Finally, we need to check for which $t$ the polynomial $f_t$ factors into two irreducible polynomial over $\Qd$. Assume that 
\[
	f_t(X) 
	= (X^2 + a X + b)(X^2 + c X + d),
\]
with $a,b,c,d\in \Qd$ and that the two factors are irreducible. Assume that $\alpha$ is a root of the first polynomial. Then because of the structure of the roots, we may assume without loss of generality that the second root of $X^2 + aX +b$ is either $\phi(\alpha)$ or $\phi^2(\alpha)$. 
We consider these two cases separately.

\case{1} The roots of $X^2 + aX +b$ are $\alpha$ and $\phi(\alpha) = (\alpha - 1)/(\alpha + 1)$. 
Then we have $b = \alpha \cdot  (\alpha - 1)/(\alpha + 1)$, which implies $\alpha^2 - (b+1) \alpha - b = 0$, i.e.\ $\alpha$ is a root of $X^2 - (b+1) X - b$ with $b\in \Qd$. This contradicts the fact that the unique minimal polynomial of $\alpha$ is $X^2 + aX +b$.

\case{2} The roots of $X^2 + aX +b$ are $\alpha$ and $\phi^2(\alpha) = -1/\alpha$. Then we have $b = \alpha \cdot (-1/\alpha) = -1$ and consequently also $d = -1$. 
Thus we get
\begin{align*}\label{eq:irredtwofactors}
	X^4 -t X^3 - 6 X^2 + t X + 1
	&= (X^2 + a X - 1)(X^2 + c X - 1)\\
	&= X^4 + (a+c) X^3 + (ac - 2)X^2 - (a + c) X +1 \nonumber
\end{align*}
and comparing coefficients we obtain $a+c = -t$ and $ac = -4$. By Vietá's formula, these two equations imply that $a$ and $c$ are the roots of the polynomial $X^2 +tX - 4 = 0$. This implies that $a$ and $c$ are integral over $\qi$ and therefore in $\qi$. Moreover, since $ac=-4$, we have that $|a|^2$ and $|c|^2$ both divide 16. In particular, $|a|$ is bounded by 4 and we can list all such imaginary quadratic integers (see the \nameref{sec:appendix} for how to find them in a systematic way). Then for each $a\neq 0$ we check whether $|a|^2$ divides 16 and if so, we compute $c=-4/a$. Then, if $c$ is integral 
, we compute $t = -(a+c)$.
Running these computations in Sage \cite{sagemath} takes less than a second and yields exactly the list of exceptional $t$'s that is stated in the lemma.
Note that if we consider $-a$ instead of $a$, we end up with $-c$ and $-t$, so for the computations it suffices to consider only $a$'s that are either positive or have positive imaginary part.
The Sage code for this proof can be found \href{https://cocalc.com/IngridVukusic/QuarticFamily/Irreducibility}{here}.
\end{proof}

Now we immediately get the following result: 

\begin{lem}\label{lem:zero}
Let $d$ be a positive square-free integer, $t \in \qi$ and
$t\neq \pm 4i$.
Then the equation
\begin{equation}\label{eq:zero}
	F_t(X,Y) 
	= X^4 -t X^3 Y - 6 X^2 Y^2 + t X Y^3 + Y^4
	= 0
	\quad \text{in } X,Y \in \qi
\end{equation}
has only the trivial solution $(x,y)=(0,0)$.
\end{lem}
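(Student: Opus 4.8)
The plan is to reduce the homogeneous equation to the univariate one and invoke Lemma~\ref{lem:irred}. First I would observe that $F_t(X,Y)$ is homogeneous of degree $4$ with $F_t(X,0)=X^4$ and $F_t(0,Y)=Y^4$; hence any solution $(x,y)\in\qi$ with exactly one of $x,y$ equal to zero forces the other to be zero as well. So it suffices to rule out solutions with $x\neq 0$ and $y\neq 0$.

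Suppose for contradiction that such a solution $(x,y)$ exists. Since $y\neq 0$, I would divide the relation $F_t(x,y)=0$ by $y^4$ to get
\[
	f_t\!\left(\frac{x}{y}\right)
	= \left(\frac{x}{y}\right)^4 - t\left(\frac{x}{y}\right)^3 - 6\left(\frac{x}{y}\right)^2 + t\,\frac{x}{y} + 1
	= 0,
\]
so that $x/y\in\Qd$ is a root of $f_t(X)$. By Lemma~\ref{lem:irred}, the polynomial $f_t(X)$ has a root in $\Qd$ only when $t=\pm 4i$, which is excluded by hypothesis. This contradiction shows no such solution exists, and therefore the only solution of \eqref{eq:zero} is $(x,y)=(0,0)$.

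There is essentially no obstacle here: the entire arithmetic content — namely, that $f_t$ has a root in $\Qd$ precisely for $t=\pm 4i$ — was already established in Lemma~\ref{lem:irred}, and the present statement is just the homogenization of that fact. The only minor point to be careful about is the degenerate case where one coordinate vanishes, which I handle first via the shape of $F_t$ on the coordinate axes.
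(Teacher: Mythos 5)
Your proposal is correct and takes essentially the same route as the paper: dispose of the axis cases directly, then for $y\neq 0$ divide by $y^4$ to get $f_t(x/y)=0$ and invoke Lemma~\ref{lem:irred} to conclude $f_t$ has no root in $\Qd$ when $t\neq\pm 4i$.
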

\begin{proof}
Let $(x,y)\in \qi$ be a solution to \eqref{eq:zero}. If $y=0$, then it is easy to see that $x=0$, i.e.\ $(x,y)$ is the trivial solution. Otherwise, we have 
$0 = F_t(x,y) = y^4 f_t(x/y)$, i.e.\ $x/y \in \Qd$ is a root of $f_t$, which is impossible by Lemma~\ref{lem:irred}.
\end{proof}

Thus, in order to solve $|F_t(X,Y)|\leq 1$, we can from now on focus on the equation $F_t(X,Y) = \mu$, where $\mu$ is a root of unity in $\qi$.

\section{Small solutions}\label{sec:smallsols}

For technical reasons we will want to assume $\min\{|x|,|y|\} \geq 3$ later in the proof. Therefore, we now find all solutions for all $|t|\geq 100$ with $\min\{|x|,|y|\} < 3$. The proof is based on the idea in \cite[Proof of Lemma 5]{HeubergerPethoTichy2002}.

\begin{lem}\label{lem:smallSols}
Let $d$ be a positive square-free integer, $t\in \qi$ with $|t|\geq 100$ 
and $\mu \in \qi^\times$.
Let $(x,y)\in \qi$ be a solution to the equation
\begin{equation}\label{eq:unity}
	F_t(X,Y) = \mu.
\end{equation}
If $\min\{|x|,|y|\} < 3$, then $(x,y)$ is trivial, i.e.\ of the shape $(\xi, 0)$ or $(0, \xi)$, where $\xi$ is a root of unity.
\end{lem}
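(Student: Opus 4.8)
The plan is to exploit the assumed symmetry $F_t(X,Y) = F_t(-Y,X)$ to reduce to the case $|y| < 3$, and then to treat the finitely many small values $y \in \{0\} \cup \{y \in \qi : 0 < |y| < 3\}$ one at a time. First I would observe that if $\min\{|x|,|y|\} < 3$, then after applying the symmetry (replacing $(x,y)$ by the equivalent solution $(-y,x)$ if necessary) we may assume $|y| < 3$. If $y = 0$, then $F_t(x,0) = x^4 = \mu$, so $x^4$ is a root of unity, hence $x$ is a root of unity and $(x,y) = (\xi,0)$ is trivial. So the real content is the case $0 < |y| < 3$.

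For $0 < |y| < 3$ I would argue that $x/y$ must be a good approximation to one of the four roots $\alpha^{(j)}$ of $f_t$, while simultaneously $|y|$ being small forces $|x|$ to be small as well, leading to a contradiction with $|t|$ large. Concretely, from $F_t(x,y) = \mu$ we get $|y|^4 |f_t(x/y)| = |\mu| = 1$, so $|f_t(x/y)| = 1/|y|^4$, which is bounded below by a positive constant (since $|y| < 3$). Writing $f_t(X) = \prod_j (X - \alpha^{(j)})$, this means $\prod_j |x/y - \alpha^{(j)}|$ is not too small; but by Lemma~\ref{lem:roots-approx} one root, $\alpha^{(0)}$, is close to $0$ and (from the description of the roots) another is close to $\pm\infty$ in the sense that $\alpha^{(2)} = -1/\alpha^{(0)}$ is large. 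The key quantitative point, following \cite[Proof of Lemma 5]{HeubergerPethoTichy2002}, is to bound $|x|$ directly: since $f_t(x/y) = x^4/y^4 - t x^3/y^3 - \cdots$, clearing denominators gives $x^4 - t x^3 y - 6 x^2 y^2 + t x y^3 + y^4 = \mu$, and isolating the terms containing $t$ yields $|t|\,|xy|\,|x^2 - y^2| = |x^4 - 6x^2y^2 + y^4 - \mu| \le |x|^4 + 6|x|^2|y|^2 + |y|^4 + 1$. If $x^2 \neq \pm y^2$, i.e. $x \neq \pm y$ and $x \neq \pm iy$, then $|x^2 - y^2| \ge 1$ (a nonzero imaginary quadratic integer has absolute value $\ge 1$); combined with $|x| \ge |y|$ (after the symmetry reduction we may also assume $|x| \ge |y|$, else swap roles) and $|y| \ge 1$, the inequality $|t|\,|x| \le |t|\,|xy|\,|x^2-y^2| \le |x|^4 + 6|x|^2|y|^2 + |y|^4 + 1 \le 8|x|^4$ would force $|x|^3 \ge |t|/8 \ge 12.5$, which is no contradiction by itself — so I need to be more careful and instead bound $|x|$ from above in terms of $|y|$.

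The correct route is: since $x/y$ is close to exactly one root $\alpha^{(j)}$ and the roots come in the reciprocal-type pairs above, one shows $|x/y|$ is either small (bounded) or close to the reciprocal of something small. In the bounded case, $|x| < 3|y| < 9$, so there are only finitely many candidate pairs $(x,y)$ with $|x|,|y| < 9$, and for each such pair $F_t(x,y)$ is an explicit linear polynomial in $t$; I would check that $|F_t(x,y)| \le 1$ with $|t| \ge 100$ forces either the coefficient of $t$ to vanish (which pins down $x^2 = y^2$, handled separately via $F_t(x,\pm x) = -4x^4 \pm \text{(lower order)}$ — actually $F_t(x,\pm x) = (1 \mp 1)\cdots$; one computes $F_t(x,x) = -4x^4$ and $F_t(x,-x) = -4x^4$, never a unit for $x \neq 0$) or gives a contradiction. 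In the unbounded case one uses $\alpha^{(2)} = -1/\alpha^{(0)}$ together with the bound $|\alpha^{(0)}| \approx 1/|t|$ from Section~\ref{sec:approx}, so that the relevant solution $(x,y)$ corresponds, via the symmetry $\phi^2$, to a bounded solution, and we are back in the previous case. The main obstacle, and where most of the work lies, is the bookkeeping in the unbounded case: making precise which of the four root-approximation types can occur when $|y| < 3$, and running the finite check (ideally in Sage, as the paper does elsewhere) over all $(x,y)$ with both coordinates small, including the degenerate lines $x = \pm y$, $x = \pm i y$, to confirm no nontrivial solution survives for $|t| \ge 100$.
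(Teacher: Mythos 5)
Your opening reductions are fine: using $(x,y)\sim(-y,x)$ to put the small coordinate in a fixed slot, and disposing of $y=0$ directly, matches the paper (which puts the small coordinate in the \emph{first} slot and assumes $|x|\leq|y|$, so that $|x|<3$). The problem is the step that should bound the other coordinate. You isolate the $t$-terms, reach
\[
	|t|\,|x|\,|y|\,|x^2-y^2| \;\leq\; |x|^4 + 6|x|^2|y|^2 + |y|^4 + 1,
\]
and correctly observe that this is not a contradiction and only produces a lower bound for the large coordinate. What you sketch afterwards does not repair this. The dichotomy ``$x/y$ bounded'' vs.\ ``$x/y \approx \alpha^{(2)}\approx t$'' does not give the claimed finite check in the unbounded case: if $(x,y)$ is of type $2$ with $|y|<3$, then $|x|\gtrsim |t||y|\geq 100$, and passing to $(-y,x)$ produces a type-$0$ solution whose \emph{second} coordinate is $x$, still $\gtrsim 100$. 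So ``we are back in the previous case'' is false — the bounded-case conclusion $|x|,|y|<9$ requires the second coordinate to already be $<3$, which is exactly what is not available. Moreover, the appeal to type classification here uses Lemma~\ref{lem:betajsmall}, which in the paper appears only later and whose proof requires exactly the kind of root-separation estimates you are trying to avoid; invoking it makes the present lemma logically heavier than it needs to be.

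The paper's route avoids all of this with a more elementary factoring. Taking (WLOG) $|x|<3$, rewrite the equation as
\[
	y\,\bigl(y^3 + txy^2 - 6x^2 y - tx^3\bigr) = \mu - x^4,
\]
and split on whether $\mu = x^4$. If $\mu \neq x^4$, both factors on the left are nonzero elements of $\qi$, so $|y| \leq |\mu - x^4| \leq 1 + |x|^4$, a clean upper bound in terms of the small coordinate. If $\mu = x^4$ (forcing $|x|=1$), the cubic factor vanishes and factors further as $(tx+y)(y-x)(x+y) = 5x^2 y$, from which $(|y|-1)^2 \leq 5|y|$, giving $|y| < 6.86$. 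Either way both coordinates are bounded by explicit small constants, and the rest is a finite Sage search over $(x,y,\mu)$ computing $t = (x^4 - 6x^2y^2 + y^4 - \mu)/(x^3y - xy^3)$ and testing whether it is an algebraic integer with $|t|\geq 100$. Your proposal, as written, does not establish the crucial upper bound on the second coordinate and therefore has a genuine gap.
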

\begin{proof}
If $x=0$ or $y=0$, we see immediately that $(x,y)$ is trivial by plugging into \eqref{eq:unity}. Let us from now on assume that $x\neq 0$ and $y\neq 0$.

Since the solutions $(x,y)$ and $(-y,x)$ are equivalent, we may assume without loss of generality that $|x|\leq|y|$ and in particular $0 < |x| < 3$. Moreover, since $(x,y)$ and $(-x,-y)$ are equivalent, we may assume without loss of generality that either the imaginary part of $x$ is
$\Im(x) > 0$ or that $x\in \Z$ and $x>0$.
There are only finitely many such imaginary quadratic integers $x$ with $0<|x| < 3$ and we can list them, see the \nameref{sec:appendix} for more details.
For each of these values of $x$, we now describe how to find all $(x,y)$ that are a solution to \eqref{eq:unity} for some $t$ and $\mu$.

Assume that $(x,y)$ is a solution to~\eqref{eq:unity} with $0<|x| < 3$.
Then~\eqref{eq:unity} implies that
\begin{equation}\label{eq:smally_rewrittenmain}
	y(y^3 + t x y^2 - 6x^2 y - tx^3) = \mu - x^4.
\end{equation}
We consider two cases.

\case{1} $\mu - x^4 \neq 0$.
Since all elements are imaginary quadratic integers, we get from \eqref{eq:smally_rewrittenmain} that
\begin{align*}
	|y|
	= |y| \cdot 1
	\leq |y| \cdot |y^3 + t x y^2 - 6x^2 y - tx^3|
	= |\mu -x^4|
	 \leq |\mu| + |x|^4
\end{align*}
and thus $|y|\leq 1 + |x|^4$. 

\case{2} $\mu - x^4 = 0$. 
Since we are assuming $y\neq 0$, equation~\eqref{eq:smally_rewrittenmain} implies
\[
	y^3 + t x y^2 - 6x^2 y - tx^3 = 0,
\]
which is equivalent to
\begin{equation}\label{eq:smally_prod}
	(tx + y)(y-x)(x+y) = 5 x^2 y.
\end{equation}
Noting that $|x|=|\mu|^{1/4}=1$, we obtain from \eqref{eq:smally_prod} that
\[
	5 |y|
	= 5 |x|^2 |y|	
	= |tx + y| |y-x| |x+y|
	\geq 1 \cdot (|y|-|x|)(|y|-|x|)
	= (|y|-1)^2, 
\]
which implies $|y|<6.86$.

Thus, in both cases we have a small upper bound on $|y|$ and there are only finitely many such imaginary quadratic integers $y$ and we can list them. Note that, if $x\notin \Z$, then the quadratic integers $y$ are in the fixed imaginary quadratic field $\Q(x)$. Moreover, one can check that $f_t(x,-y) = f_{-t}(x,y)$ for any $t,x,y$, and we can therefore restrict our search to $y$ with either  $\Im(y)>0$ or $y\in \Z$ and $y>0$.

Then we only need to plug all such $(x,y)$ into \eqref{eq:unity} and compute 
\[
	t = \frac{x^4 - 6x^2y^2 + y^4 - \mu}{x^3 y - xy^3}
\]
and check if it might be a quadratic integer.
Here we need to check all possible units $\mu$ that are in the numberfield of $x$ and $y$. 
Note that if $(x,y)\in \Z^2$, then we check all roots of unity $\pm 1$, $\pm i$, $\pm \zeta_6$, $\pm \zeta_6^2$.

Now if $t$ is a quadratic integer 
and $|t|\geq 100$, then we have found a non-trivial solution of interest. Otherwise, $(x,y)$ cannot be a non-trivial solution of a considered equation.

Doing all the computations with Sage \cite{sagemath} takes a few seconds on a usual pc and reveals no non-trivial solutions (see  \href{https://cocalc.com/IngridVukusic/QuarticFamily/SmallSolutions}{here}).
\end{proof}

\begin{rem}\label{rem:smallSols}
If we drop the assumption $|t| \geq 100$ in the above described computer search, we get nontrivial solutions for 
\[
	t \in 
		\pm \{
		4i,
		3 \sqrt{-2},
		2 \sqrt{-3}
	\}
	\cup
	\pm \{
		1,
		4,
		\frac{\sqrt{-3} \pm 1}{2},
		\sqrt{-17}
	\}
	.
\]
For the $t$'s in the first set we have by Lemma~\ref{lem:irred} that $f_t$ is reducible, while for the $t$'s in the second set it is irreducible.
\end{rem}

\section{Approximation of the roots}\label{sec:approx}


Let $\alpha=\alpha^{(0)}, \alpha^{(1)}, \alpha^{(2)}, \alpha^{(3)}$ be the roots of $f_t$. As described in the proof of Lemma~\ref{lem:irred}, we may write
\begin{equation}\label{eq:alphas}
	\alpha^{(0)}= \alpha, \quad
	\alpha^{(1)}= \frac{\alpha-1}{\alpha+1},\quad 
	\alpha^{(2)}=-\frac{1}{\alpha}, \quad 
	\alpha^{(3)}=-\frac{\alpha+1}{\alpha-1}.
\end{equation}

We now compute asymptotic estimates for the roots of $f_t$. This can be done e.g.\ with Sage \cite{sagemath}, substituting $1/t=:s$ and doing the computations in the ring of power series with variable $s$. One can approximate $\alpha$ by starting at $x_0=0$ and applying several steps of Newton's Method. Then the other approximations can be obtained from the formulas in \eqref{eq:alphas}.
The error terms follow from an application of Rouché's Theorem, see the proof below.

We use the following $L$-notation: For functions $h,k$ we write $h(z) = L(k(z))$ if $|h(z)|\leq k(z)$ for all $z\geq 100$. 

\begin{lem}\label{lem:roots-approx}
Let $t\in \C^\times$ with $|t|\geq 100$. Then the roots of $f_t(X)= X^4 -t X^3 - 6 X^2 + t X + 1$ are approximated by
\begin{align*}
	\alpha^{(0)} &= - \frac{1}{t} +  L\left( \frac{5.01}{|t|^3}\right), 
	& \alpha^{(2)} &= t + L\left( \frac{5.02}{|t|} \right), \\
	\alpha^{(1)} &= - 1 + L\left(\frac{2.16}{|t|}\right),
	& \alpha^{(3)} &= 1  + L\left(\frac{2.16}{|t|}\right).
\end{align*}
\end{lem}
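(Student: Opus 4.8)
The plan is to find, for each root, an explicit rational approximation $\beta$ (expressed in terms of $t$) and then apply Rouché's Theorem on a disc centered at $\beta$ to show that $f_t$ has exactly one root inside, yielding the stated error bound. First I would treat $\alpha^{(0)}$, the root near $0$. Writing $\beta_0 = -1/t$, one computes $f_t(-1/t) = 1/t^4 + 1/t^2 - 6/t^2 - 1 + 1 = 1/t^4 - 5/t^2$, so $|f_t(\beta_0)|$ is of size roughly $5/|t|^2$. On a circle $|X - \beta_0| = r$ with $r = 5.01/|t|^3$, I would bound $|f_t(X) - f_t(\beta_0)|$ from above using $|f_t'(X)|$ (which near $X=0$ is dominated by the linear term $tX$, contributing about $|t|\cdot|\beta_0| \approx 1$, plus lower-order corrections), times $r$, and compare with a lower bound on $|f_t(\beta_0)|$. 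Provided the constant $5.01$ is chosen just large enough that $r \cdot \sup_{|X-\beta_0|=r}|f_t'(X)| < |f_t(\beta_0)|$ for all $|t|\ge 100$, Rouché gives exactly one root of $f_t$ in that disc, which must be $\alpha^{(0)}$ since it is the unique small root (this uniqueness is part of the claim and follows because the other three roots are near $-1$, $1$, $t$). For $\alpha^{(2)}$ one can either repeat the argument with $\beta_2 = t$ — here $f_t(t) = t^4 - t^4 - 6t^2 + t^2 + 1 = -5t^2 + 1$ and $|f_t'(t)|$ is of size $\approx |t|^3$, giving $r \approx 5.02/|t|$ — or, more cheaply, use the relation $\alpha^{(2)} = -1/\alpha^{(0)}$ from \eqref{eq:alphas} and propagate the error: $-1/\alpha^{(0)} = -1/(-1/t + L(5.01/|t|^3)) = t(1 + L(5.01/|t|^2))^{-1} = t + L(5.02/|t|)$ after a routine geometric-series estimate valid for $|t|\ge 100$.

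For $\alpha^{(1)}$ and $\alpha^{(3)}$, I would again use the Möbius relations $\alpha^{(1)} = (\alpha-1)/(\alpha+1)$ and $\alpha^{(3)} = -(\alpha+1)/(\alpha-1)$ with $\alpha = \alpha^{(0)} = -1/t + L(5.01/|t|^3)$. Since $\alpha$ is very close to $0$, $(\alpha-1)/(\alpha+1) = -1 + 2\alpha/(1+\alpha) \cdot (-1)\cdot(-1)$… more carefully, $(\alpha - 1)/(\alpha+1) = -(1-\alpha)/(1+\alpha) = -1 + 2\alpha/(1+\alpha)$, and $|2\alpha/(1+\alpha)| \le 2|\alpha|/(1-|\alpha|)$; with $|\alpha| \le 1/|t| + 5.01/|t|^3 \le 1.01/|t|$ for $|t|\ge 100$ this is at most $2\cdot 1.01/|t| \cdot (1 - 0.0101)^{-1} < 2.16/|t|$. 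The computation for $\alpha^{(3)} = -(\alpha+1)/(\alpha-1) = (1+\alpha)/(1-\alpha) = 1 + 2\alpha/(1-\alpha)$ is symmetric and gives the same bound $2.16/|t|$. Alternatively one could run Rouché directly on discs around $\mp 1$ and $\pm 1$; I prefer the Möbius route since it reuses the single nontrivial Rouché estimate.

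The main obstacle is purely the bookkeeping of constants: one must verify that the specific numerical constants $5.01$, $5.02$, $2.16$ actually work uniformly for all complex $t$ with $|t|\ge 100$, which requires careful (but elementary) upper bounds on $|f_t'|$ over the relevant circles and lower bounds on $|f_t(\beta)|$, keeping track of how the $|t|\ge 100$ threshold enters each geometric-series tail. There is no conceptual difficulty — Rouché plus triangle inequalities — but the estimates must be tight enough that the error constants come out below the claimed values, and loose enough to survive the worst-case argument of $t$. I would organize the write-up as: (i) the Rouché lemma setup on a general disc; (ii) apply it to $\beta_0 = -1/t$ to get $\alpha^{(0)}$ and confirm uniqueness of the small root; (iii) derive $\alpha^{(1)}, \alpha^{(2)}, \alpha^{(3)}$ by the Möbius relations and propagate the error term with explicit geometric-series bounds, checking each constant against the $|t|\ge 100$ hypothesis.
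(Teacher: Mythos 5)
Your Rouch\'e setup for $\alpha^{(0)}$ has the inequality inverted. You propose to bound $|f_t(X)-f_t(\beta_0)|$ \emph{from above} by $r\cdot\sup_{|X-\beta_0|=r}|f_t'(X)|$ and arrange $r\cdot\sup|f_t'| < |f_t(\beta_0)|$. But if $|f_t(X)-f_t(\beta_0)| < |f_t(\beta_0)|$ on the circle, Rouch\'e pairs $f_t$ with the nonzero constant $f_t(\beta_0)$ and concludes that $f_t$ has \emph{zero} roots inside the disc --- the opposite of what is wanted. What is needed is a \emph{lower} bound $|f_t(X)-f_t(\beta_0)| > |f_t(\beta_0)|$ on the circle, so that $f_t$ can be paired with $f_t - f_t(\beta_0)$, which vanishes at $\beta_0$. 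Such a lower bound does not follow from $\sup|f_t'|$; one isolates the dominant linear contribution $t(X-\beta_0)$ (of modulus $|t|r = 5.01/|t|^2 > 5.0001/|t|^2 \ge |f_t(\beta_0)|$) and absorbs the remaining terms (of size $O(|t|^{-4})$) by the reverse triangle inequality, exactly as the paper does. Note also that your inequality as stated is simply false: with $r = 5.01/|t|^3$ and $\sup|f_t'|\approx|t|$ near the origin, the left side exceeds $|f_t(\beta_0)|$. Your own phrase ``$5.01$ chosen just large enough'' is the tell --- enlarging $r$ should help, which is only consistent with the lower-bound direction.

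Once that is fixed, the rest of your plan is correct and takes a genuinely different and arguably cleaner route for the other three roots. The paper reruns the Rouch\'e computation separately for $\alpha^{(1)},\alpha^{(2)},\alpha^{(3)}$ (delegated to Sage code with the word ``analogously''), whereas you propagate the single estimate for $\alpha^{(0)}$ through the M\"obius relations of \eqref{eq:alphas}. Your propagation bounds check out: with $|\alpha|\le 1/|t| + 5.01/|t|^3 \le 1.01/|t|$ for $|t|\ge 100$, one has $|2\alpha/(1\pm\alpha)|\le 2.02/(0.99|t|) < 2.16/|t|$ for $\alpha^{(1)},\alpha^{(3)}$, and writing $\alpha = -1/t + \varepsilon$ with $|\varepsilon|\le 5.01/|t|^3$ gives $|{-1/\alpha} - t| = |t^2\varepsilon/(1-t\varepsilon)| \le (5.01/|t|)/(1-5.01/|t|^2) < 5.02/|t|$. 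The M\"obius route reuses one Rouch\'e application, automatically locates all four roots (hence identifies $\alpha^{(0)}$ as the unique small root, a fact Proposition~\ref{prop:approx} needs), and avoids three more delicate circle estimates, at the mild cost of relying on the labeling $\alpha^{(j)}=\phi^j(\alpha)$ from \eqref{eq:alphas}, which is already available at this point in the paper.
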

\begin{proof}
We want to prove the approximation for $\alpha^{(0)}$ via Rouché's Theorem. 
Let us set $h(z):=f(-\frac{1}{t}+z)$. The goal is to show that $h(z)$ has a root in the disc with origin 0 and radius $5.01|t|^{-3}$.
We check that $|h(0)| < |h(z)-h(0)|$ for any $z$ with $|z|=5.01 |t|^{-3}$. On the one hand we have 
\begin{align*}
	|h(0)|
	= \left| f(-\frac{1}{t}) \right|
	= \left|- \frac{5}{t^2} + \frac{1}{t^4} \right|
	\leq \frac{5.0001}{|t|^2},
\end{align*}
where we used $|t|\geq 100$ for the estimate. On the other hand we have
\begin{align*}
	|h(z)-h(0)|
	&= \left| f(-\frac{1}{t} + z) - f(-\frac{1}{t})\right| \\
	&= \left| z^4 
		+ \left( -t - \frac{4}{t}\right) z^3 
		+ \left( - 3 + \frac{6}{t^2} \right) z^2 		
		+ \left( t + \frac{9}{t} - \frac{4}{t^3} \right) z \right|\\
	&\geq \left| t z \right| 
		- \left|z^4  + \left( -t - \frac{4}{t}\right) z^3 
			+ \left( - 3 + \frac{6}{t^2} \right) z^2 		
			+ \left(\frac{9}{t} - \frac{4}{t^3} \right) z\right|\\
	&>  |t| \cdot 5.01 |t|^{-3} - 50|t|^{-4}
	> \frac{5.0001}{|t|^2},
\end{align*}
where we used $|z|=5.01 |t|^{-3}$ and $|t|\geq 100$.
Overall, we have obtained
\[
	|h(0)| < |h(z)-h(0)|
	\quad \text{for }
	|z|=5.01 |t|^{-3}.
\]
Now note that the function $h_1(z):=h(z)-h(0)$ has a root with $|z|<5.01 |t|^{-3}$, namely $z=0$. Moreover, $h_1(z)$ and the constant function $h(0)$ are holomorphic. Thus Rouché's Theorem tells us that the function $h_1(z)+h(0)=h(z)=f(-\frac{1}{t} + z)$ has a root with $|z|<5.01 |t|^{-3}$. This immediately implies that $f$ has a root with value $- \frac{1}{t} + L\left( \frac{5.01}{|t|^3}\right)$.

The other approximations can be checked analogously, see the \href{https://cocalc.com/IngridVukusic/QuarticFamily/Rouche}{code} for details. 
\end{proof}

\section{Types of solutions}\label{sec:types}

As in Section~\ref{sec:smallsols}, let $d$ be a positive square-free integer and we continue focusing on the Thue equations of the type
\begin{equation}\label{eq:unity2}
	F_t(X,Y) = \mu,
\end{equation}
where $t \in \qi$ and $\mu \in \qi^\times$.

In general, when solving Thue equations, one of the tricks is to use the following fact: A solution $(x,y)$ usually corresponds to an extremely good approximation to one of the roots of the related univariate polynomial. 

Let $(x,y)\in \qi^2$ be a solution to equation~\eqref{eq:unity2}. Then we define
\[
	\beta^{(i)}
	:= x - \alpha^{(i)}y,
	\quad \text{for } i = 0,1,2,3.
\]
We say that $(x,y)$ is a solution of {\it type} $j$ if  
\[
	|\beta^{(j)}|=\min_{0\leq i \leq 3} |\beta^{(i)}|.
\]

The next lemma quantifies how good such an approximation is.

\begin{lem}\label{lem:betajsmall}
Let, $t \in \qi$, $|t|\geq 100$ and $\mu \in \qi^\times$. 
Let $(x,y)\in \qi^2$ be a non-trivial solution to the equation 
\begin{equation*}
	F_t(X,Y) = \mu.
\end{equation*}
If $(x,y)$ is of type $j$, then
\begin{equation*}
	|x - \alpha^{(j)} y|
	=|\beta^{(j)}|
	<\frac{8.86}{|t|\cdot |y|^3}.
\end{equation*}
\end{lem}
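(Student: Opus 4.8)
The plan is to exploit the identity $F_t(x,y) = \prod_{i=0}^{3} \beta^{(i)}$ together with the fact that $|F_t(x,y)| = |\mu| = 1$. From $\prod_{i=0}^{3}|\beta^{(i)}| = 1$ and the assumption that $(x,y)$ is of type $j$ (so $|\beta^{(j)}|$ is the smallest of the four), we immediately get
\[
	|\beta^{(j)}|^4 \leq \prod_{i=0}^{3}|\beta^{(i)}| = 1,
\]
but more usefully, isolating $|\beta^{(j)}|$ gives
\[
	|\beta^{(j)}| = \frac{1}{\prod_{i\neq j}|\beta^{(i)}|},
\]
so the task reduces to bounding $\prod_{i\neq j}|\beta^{(i)}|$ from below. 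Each factor $|\beta^{(i)}| = |x - \alpha^{(i)} y|$ with $i\neq j$ should be roughly of size $|\alpha^{(i)} - \alpha^{(j)}|\cdot|y|$, because if $(x,y)$ approximates $\alpha^{(j)}$ well then $x/y$ is far from the other roots; the separation between distinct roots of $f_t$ is governed by the explicit formulas in \eqref{eq:alphas} and the asymptotics in Lemma~\ref{lem:roots-approx}.

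The key steps, in order, would be: (1) write $\beta^{(i)} = (x - \alpha^{(j)}y) + (\alpha^{(j)} - \alpha^{(i)})y = \beta^{(j)} + (\alpha^{(j)} - \alpha^{(i)})y$ for each $i\neq j$, and use the triangle inequality $|\beta^{(i)}| \geq |\alpha^{(j)} - \alpha^{(i)}|\,|y| - |\beta^{(j)}|$. (2) Since $(x,y)$ is nontrivial and $\min\{|x|,|y|\}\geq 3$ may be assumed (by Lemma~\ref{lem:smallSols}, trivial otherwise — though one should double check whether this lemma permits assuming that here, as the statement of Lemma~\ref{lem:betajsmall} only says "non-trivial"), bound $|\beta^{(j)}|$ crudely: since it is the minimum and the product of all four is $1$, one has $|\beta^{(j)}|\leq 1$; actually a sharper route is that $|\beta^{(j)}|$ small forces the other $|\beta^{(i)}|$ to be comparable to $|\alpha^{(i)}-\alpha^{(j)}|\,|y|$, which for $|y|\geq 3$ and the root separations below dominates the $|\beta^{(j)}|$ correction. (3) Compute the three relevant root differences using \eqref{eq:alphas} and Lemma~\ref{lem:roots-approx}. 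For instance, for $j=0$ with $\alpha^{(0)} = -1/t + L(5.01|t|^{-3})$: $\alpha^{(0)}-\alpha^{(2)} = -1/t - t + L(\cdots)$ has absolute value $\approx|t|$; $\alpha^{(0)}-\alpha^{(1)}$ and $\alpha^{(0)}-\alpha^{(3)}$ are each $\approx 1$. So $\prod_{i\neq 0}|\beta^{(i)}| \gtrsim (|t|\,|y|)(|y|)(|y|) \cdot(\text{constants}) = c\,|t|\,|y|^3$, giving $|\beta^{(0)}| < \frac{1}{c\,|t|\,|y|^3}$ with $1/c$ slightly above $8$. (4) Do the same for $j=3$ (by the symmetry $\alpha^{(3)} = -(\alpha+1)/(\alpha-1)$, the configuration of root differences is the same up to relabeling, so the constant is identical). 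Then take the worst of the finitely many constants and round to get $8.86$.

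The main obstacle is bookkeeping the constants carefully: one must propagate the $L(\cdot)$ error terms from Lemma~\ref{lem:roots-approx} through the products of three differences and through the $|\beta^{(j)}|$-correction in step (1), and verify that with $|t|\geq 100$ (and the eventual assumption $|y|\geq 3$, or at least $|y|\geq 1$, for nontrivial solutions) everything fits under $8.86$. A secondary subtlety is the lower bound on $|y|$: if $|y|=1$ or $2$ is allowed among "non-trivial" solutions, the cubic $|y|^3$ in the denominator is weakest there, so one should confirm the constant still works, or invoke that Lemma~\ref{lem:smallSols} has already removed the case $\min\{|x|,|y|\}<3$. I would also need to confirm $\beta^{(j)}\neq 0$ — but that holds since $\alpha^{(j)}\notin\Qd$ by Lemma~\ref{lem:irred} (for $t\neq\pm 4i$, which is forced by $|t|\geq 100$ and, say, irreducibility), so $x - \alpha^{(j)}y = 0$ would force $y=0$, contradicting nontriviality.
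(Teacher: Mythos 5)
Your proposal is correct and follows the paper's strategy: isolate $|\beta^{(j)}|$ via the product identity $\prod_i |\beta^{(i)}| = |\mu| = 1$, lower-bound each $|\beta^{(i)}|$ ($i \neq j$) via root separations from Lemma~\ref{lem:roots-approx} (one separation $\approx |t|$, two $\approx 1$), and combine. The one place where you work harder than necessary is step (1): you write $|\beta^{(i)}| \geq |\alpha^{(j)} - \alpha^{(i)}|\,|y| - |\beta^{(j)}|$ and then struggle to control the correction $|\beta^{(j)}|$, ultimately invoking $|\beta^{(j)}| \leq 1$ and a lower bound $|y| \geq 3$ (which, as you flag, brings in a dependency on Lemma~\ref{lem:smallSols}). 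The paper sidesteps this entirely: starting from the same identity $|y|\,|\alpha^{(i)}-\alpha^{(j)}| = |\beta^{(j)} - \beta^{(i)}| \leq |\beta^{(i)}| + |\beta^{(j)}|$, it uses the defining property of type $j$ (that $|\beta^{(j)}|$ is the minimum) to write $|\beta^{(i)}| + |\beta^{(j)}| \leq 2|\beta^{(i)}|$, giving $|\beta^{(i)}| \geq \tfrac{1}{2}|y|\,|\alpha^{(i)}-\alpha^{(j)}|$ at once. This yields the factor $2^3 = 8$ in the numerator, and with the separations $\geq 0.96$ (generic) and $\geq 0.98|t|$ (involving $\alpha^{(2)}$) one gets $8/(0.96^2 \cdot 0.98) < 8.86$ with no hypothesis on $|y|$ beyond $y \neq 0$ (which non-triviality ensures). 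Notice your own version is essentially this once you substitute $|\beta^{(j)}| \leq |\beta^{(i)}|$ in place of $|\beta^{(j)}| \leq 1$ in the subtracted term; it is just a cleaner way to close the loop. As it stands your argument does need $|y| \geq 2$ for the subtracted term not to swamp the separation, and Lemma~\ref{lem:smallSols} does license $\min\{|x|,|y|\}\geq 3$ for non-trivial solutions, so your version is correct — but the paper's factor-of-two trick removes the extra hypothesis and is worth internalizing.
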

\begin{proof}
Let $(x,y)$ be a solution of type $j$. From 
\[
	|y||\alpha^{(i)}-\alpha^{(j)}|
	= | \alpha^{(i)} y - x + x - \alpha^{(j)}y|
	\leq |\beta^{(i)}| + |\beta^{(j)}|\leq 2|\beta^{(i)}|
\]
and 
\[
	\prod_{i=0}^{3} \beta^{(i)}	
	=\prod_{i=0}^{3}(x-\alpha^{(i)}y)
	=F_t(x,y)=\mu,
\]
we conclude that 
\begin{equation*}\label{eq:betaj-leq}
	|\beta^{(j)}|
	=\frac{1}{\prod_{i\neq j}|\beta^{(i)}|}
	\leq \frac{2^3}{|y|^3 \prod_{i\neq j}|\alpha^{(j)}-\alpha^{(i)}|}.
\end{equation*}

From Lemma~\ref{lem:roots-approx} we see (assuming $|t|\geq 100$) that the difference between any two roots is at least $0.96$.

Moreover, the difference between $\alpha^{(2)}$ and any other root is at least $0.98|t|$. Thus we obtain
\begin{equation*}	
	|\beta^{(j)}|
	\leq \frac{1}{|y|^3}\cdot\frac{8}{0.96^2\cdot 0.98|t|}
	<\frac{8.86}{|t|\cdot |y|^3}.
\end{equation*}

\end{proof}

Finally, we describe how the types of equivalent solutions $(x,y)$ and $(-y,x)$ relate.

\begin{lem}\label{lem:typewlog}
Let $t \in \qi$, $|t|\geq 100$ and $\mu \in \qi^\times$. 
Let $(x,y)\in \qi^2$ be a solution to the equation 
\begin{equation*}
	F_t(X,Y) = \mu
\end{equation*}
with $\min\{|x|,|y|\} \geq 3$. Then $(x,y)$ is of type $j$ if and only if $(-y,x)$ is a solution of type $j+2\pmod{4}$.
\end{lem}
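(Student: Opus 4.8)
The plan is to exploit the symmetry $F_t(X,Y)=F_t(-Y,X)$ together with the relation among the roots $\alpha^{(i+2)}=-1/\alpha^{(i)}$, and to show that replacing $(x,y)$ by $(-y,x)$ simply shifts the index of the nearest root by $2$. First I would record the algebraic facts: from the explicit formulas in \eqref{eq:alphas} one checks directly that $\alpha^{(i+2)}=-1/\alpha^{(i)}$ for all $i$ (indices mod $4$). By the symmetry of $F_t$ recalled in Section~\ref{sec:results}, $(-y,x)$ is again a solution of $F_t(X,Y)=\mu$, and it is non-trivial since $|x|,|y|\geq 3$ forces $x\neq 0\neq y$. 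Writing $\beta^{(i)}=x-\alpha^{(i)}y$ for $(x,y)$ and $\tilde\beta^{(i)}=-y-\alpha^{(i)}x$ for $(-y,x)$, the root relation yields the key identity $\tilde\beta^{(i)}=\beta^{(i-2)}/\alpha^{(i-2)}$.

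Next I would prove the ``only if'' direction. Let $j$ be the type of $(x,y)$ and $k$ the type of $(-y,x)$; the claim is $k\equiv j+2\pmod 4$. Lemma~\ref{lem:betajsmall} applied to both solutions gives $|\beta^{(j)}|<8.86/(|t|\,|y|^3)$ and $|\tilde\beta^{(k)}|<8.86/(|t|\,|x|^3)$. Using the identity above and the bound $|\alpha^{(i)}|\leq 1.01|t|$ (read off from Lemma~\ref{lem:roots-approx}, the only large root being $\alpha^{(2)}\approx t$), the second estimate becomes $|\beta^{(k-2)}|<8.95/|x|^3\leq 8.95/27<0.34$, while $|\beta^{(j)}|<8.86/2700<0.01$. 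On the other hand $\beta^{(j)}-\beta^{(k-2)}=(\alpha^{(k-2)}-\alpha^{(j)})y$, so if $k-2\not\equiv j\pmod 4$ the two roots $\alpha^{(k-2)},\alpha^{(j)}$ are distinct, and since any two distinct roots of $f_t$ differ by at least $0.96$ (as used in the proof of Lemma~\ref{lem:betajsmall}) and $|y|\geq 3$ we get $|\beta^{(j)}-\beta^{(k-2)}|\geq 2.88$. This contradicts $|\beta^{(j)}-\beta^{(k-2)}|\leq |\beta^{(j)}|+|\beta^{(k-2)}|<0.35$, so indeed $k\equiv j+2\pmod 4$.

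Finally, the ``if'' direction is formal: by what was just shown, for any solution $(a,b)$ with $\min\{|a|,|b|\}\geq 3$ the passage from $(a,b)$ to $(-b,a)$ adds $2$ to the type mod $4$; applied to $(-y,x)$ (whose coordinates have the same absolute values) this sends it to $(-x,-y)$ with the type increased by a further $2$, i.e.\ to type $j$, and $(-x,-y)$ has the same type as $(x,y)$ since negating both coordinates negates each $\beta^{(i)}$. Hence $(x,y)$ has type $j$ exactly when $(-y,x)$ has type $j+2\pmod 4$. I expect the only real care to be in bookkeeping the constants so that the contradiction $0.35<2.88$ is comfortable — in particular in bounding $|\alpha^{(2)}|$, which is precisely why the denominator $\alpha^{(i-2)}$ in the identity $\tilde\beta^{(i)}=\beta^{(i-2)}/\alpha^{(i-2)}$ does not spoil the estimate; the rest is the routine verification of the two displayed identities.
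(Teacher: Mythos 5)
Your proof is correct and uses the same core ingredients as the paper's: the root identity $\alpha^{(i+2)}=-1/\alpha^{(i)}$, the bound of Lemma~\ref{lem:betajsmall}, the approximations of Lemma~\ref{lem:roots-approx} giving $|\alpha^{(i)}|\leq 1.01|t|$ and the $0.96$ root gap, and the hypothesis $\min\{|x|,|y|\}\geq 3$. The packaging differs slightly: the paper first characterizes ``type $j$'' by the threshold $|\alpha^{(j)}-x/y|<0.48$ and then directly computes $|\alpha^{(j+2)}-(-y)/x|$ via the identity, concluding it is $<0.48$; you instead let $k$ be the type of $(-y,x)$, use the identity $\tilde\beta^{(k)}=\beta^{(k-2)}/\alpha^{(k-2)}$ to show both $\beta^{(j)}$ and $\beta^{(k-2)}$ are small, and derive a contradiction if $k-2\not\equiv j$. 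Your constants ($0.34$, $0.01$ versus $2.88$) check out. One small remark: your final ``if'' direction is more circuitous than it needs to be — once you know that passing from any admissible solution $(a,b)$ to $(-b,a)$ adds $2$ to the type (a well-defined function since your estimates show the minimizer of $|\beta^{(i)}|$ is unique), the biconditional is immediate because $+2\pmod 4$ is its own inverse; you do not need the detour through $(-x,-y)$, though it is not wrong.
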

\begin{proof}
Let $|t|\geq 100$. From Lemma~\ref{lem:betajsmall} it follows that
\[
	\left| \alpha^{(j)} - \frac{x}{y} \right|
	< \frac{8.86}{|t| \cdot |y|^4}
	< 0.09
\]
for any non-trivial solution $(x,y)$ of any type $j$. Since by Lemma~\ref{lem:roots-approx} the distance between any two distinct roots is at least $0.96$, we can say that any non-trivial solution is of type $j$ if and only if $|\alpha^{(j)}-x/y|<0.48$. 

Now let $(x,y)$ be a non-trivial solution of type $j$. Then from Lemma~\ref{lem:betajsmall} we get (with $j+2$ computed in modulo 4 arithmetic) that
\begin{align*}
	\left| \alpha^{(j+2)} - \frac{-y}{x} \right|
	= \left| - \frac{1}{\alpha^{(j)}} + \frac{y}{x} \right|
	= \left| \frac{-x + \alpha^{(j)} y}{\alpha^{(j)}x} \right|
	< \frac{8.86}{|t| \cdot |y|^3 \cdot |x| \cdot |\alpha^{(j)}|}.
\end{align*}
Since by Lemma~\ref{lem:roots-approx} we have $|\alpha^{(j)}|>0.94 |t|^{-1}$, we obtain
\begin{align*}
	\left| \alpha^{(j+2)} - \frac{-y}{x} \right|
	< \frac{9.43}{|x|\cdot |y|^3}.
\end{align*}
With $\min\{|x|,|y|\} \geq 3$ we obtain 
\begin{align*}
	\left| \alpha^{(j+2)} - \frac{-y}{x} \right|
	< 0.48,
\end{align*}
which implies that $(-y,x)$ is indeed of type $j+2 \pmod{4}$.
\end{proof}

In view of Lemma~\ref{lem:typewlog}, since $(x,y)$ and $(-y,x)$ are equivalent solutions, it is enough to consider only solutions of type 0 or of type 3 in order to prove Theorem~\ref{thm:main100}.

\section{Finding lower bounds for $|y|$}\label{sec:lowerbound}

In this section we prove the following lower bounds for $|y|$. Note that we could push the absolute lower bound even higher, but the goal is only to contradict the upper bound that will follow from Proposition~\ref{prop:approx}.

\begin{lem}\label{lem:lbfory}
Let, $t \in \qi$, $|t|\geq 100$ and $\mu \in \qi^\times$. 
Let $(x,y)\in \qi^2$ be a solution to the equation 
\begin{equation}\label{eq:unity-steps}
	F_t(X,Y) = \mu.
\end{equation}
Assume that $\min\{|x|,|y|\}\geq 3$ and that $(x,y)$ is a solution of type 0 or 3.
Then we have
\begin{equation}\label{eq:lem:lbfory_rel}
	|y|
	> 0.44 |t|
\end{equation}
and moreover
\begin{equation}\label{eq:lem:lbfory_abs}
	|y|
	> 1.047 \cdot 10^{13}.
\end{equation}
\end{lem}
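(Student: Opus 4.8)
The plan is to derive the two lower bounds on $|y|$ from the approximation inequality in Lemma~\ref{lem:betajsmall} together with explicit information on the sizes of the roots $\alpha^{(j)}$ from Lemma~\ref{lem:roots-approx}. The rough idea: a solution $(x,y)$ of type $j$ has $x/y$ extremely close to $\alpha^{(j)}$, so $|x|/|y|$ is essentially $|\alpha^{(j)}|$; since $|\alpha^{(0)}|$ is tiny ($\approx 1/|t|$) and $|\alpha^{(3)}|\approx 1$, the constraint $\min\{|x|,|y|\}\geq 3$ forces $|y|$ (and hence $|x|$) to be large. That alone will not directly give $|y|>0.44|t|$ for type $3$; the quantitative gain comes from feeding the approximation back on itself, i.e.\ combining $|x-\alpha^{(j)}y|<8.86/(|t||y|^3)$ with a second relation obtained by looking at a different root or at a Padé/continued-fraction-type auxiliary inequality (the section title mentions ``Padé approximations'').

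\textbf{Step 1: the relative bound for type $0$.} If $(x,y)$ has type $0$, then by Lemma~\ref{lem:betajsmall},
\[
	|x|\le |\alpha^{(0)}||y| + |\beta^{(0)}|
	< \frac{1.01}{|t|}|y| + \frac{8.86}{|t||y|^3},
\]
using $|\alpha^{(0)}|\le 1/|t|+5.01/|t|^3 < 1.01/|t|$ from Lemma~\ref{lem:roots-approx}. Since $|x|\geq 3$, this yields $3|t| < 1.01|y| + 8.86/|y|^3$, and with $|y|\geq 3$ the second term is negligible, so $|y| > (3|t|-\text{tiny})/1.01 > 0.44|t|$ comfortably (in fact close to $3|t|$, but $0.44|t|$ is all that is claimed).

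\textbf{Step 2: the relative bound for type $3$.} Here $|\alpha^{(3)}|\approx 1$, so the above argument only gives $|x|\approx|y|$, not a bound in terms of $|t|$. Instead I would use the structure $\alpha^{(3)} = -\frac{\alpha+1}{\alpha-1}$, equivalently the fact that if $(x,y)$ has type $3$ then an associated pair has type $0$: applying the map that sends a type-$3$ solution to a type-$0$ one (via $\alpha^{(3)}\leftrightarrow\alpha^{(0)}$ under $\phi^2$ or the appropriate substitution), or more directly computing $|x-\alpha^{(0)}y|$ for a type-$3$ solution using $\alpha^{(0)}-\alpha^{(3)} = \alpha + \frac{\alpha+1}{\alpha-1}$ which has absolute value roughly $2$. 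Then
\[
	|x - \alpha^{(0)}y|
	\le |x-\alpha^{(3)}y| + |y||\alpha^{(3)}-\alpha^{(0)}|,
\]
but this is the wrong direction. The right move is: since $F_t(x,y)=\mu$ and $(x,y)$ is of type $3$, the quantity $x - \alpha^{(0)}y$ is \emph{not} small, so $|x-\alpha^{(0)}y|\asymp |y||\alpha^{(3)}-\alpha^{(0)}|\asymp 2|y|$; meanwhile $\prod_i|\beta^{(i)}| = 1$ forces $|\beta^{(2)}| = 1/(|\beta^{(0)}||\beta^{(1)}||\beta^{(3)}|)$, and estimating $|\beta^{(0)}|,|\beta^{(1)}|$ from below by multiples of $|y|$ and $|\beta^{(3)}|$ from above by $8.86/(|t||y|^3)$ gives $|\beta^{(2)}| \geq c|t||y|^3/|y|^2 = c|t||y|$. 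On the other hand $|\beta^{(2)}| = |x-\alpha^{(2)}y| \le |x| + |\alpha^{(2)}||y| < |y| + (|t|+5.02/|t|)|y|$ is only $O(|t||y|)$ — so this balances and pins down constants but may not immediately beat $0.44|t|$. I expect the actual argument instead writes the Thue equation as a product of three linear factors in the new variables after the $\phi^2$-substitution, reducing the type-$3$ case to the type-$0$ estimate of Step 1; that is the cleanest route and is surely what the ``Padé'' machinery of this section is set up to exploit.

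\textbf{Step 3: the absolute bound $|y| > 1.047\cdot10^{13}$.} Once $|y| > 0.44|t| \geq 44$ is known, I would combine Lemma~\ref{lem:betajsmall}, which gives $|\alpha^{(j)} - x/q|$-type upper bounds with $q=y$, with a Padé/hypergeometric-free \emph{gap} estimate: two distinct good rational approximations $p_1/q_1,p_2/q_2 \in \qi$ to the same $\alpha^{(j)}$ cannot both be too good, because $|p_1 q_2 - p_2 q_1| \ge 1$ (nonzero imaginary quadratic integer) forces $1 \le |q_1 q_2|\,|\alpha - p_1/q_1 + \alpha - p_2/q_2|^{-1}\cdots$ — more precisely, using an explicit small solution (or the trivial approximant coming from $\alpha^{(0)} = -1/t + L(5.01/|t|^3)$, i.e.\ $p/q = 0/1$ or $-1/t$ viewed as an approximant) as the ``second'' approximation, and pitting it against the putative solution $(x,y)$, yields $1 \le |q|\cdot(\text{something})\cdot |y|^{-3}$-type inequality that forces $|y|$ to exceed an explicit constant. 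Plugging $|t|\geq 100$ and chasing the constants through should produce the stated $1.047\cdot 10^{13}$.

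\textbf{Main obstacle.} The genuinely delicate part is Step 2 and the bookkeeping in Step 3: getting the relative bound for \emph{type $3$} solutions (where the naive triangle-inequality argument fails because $|\alpha^{(3)}|$ is of order $1$ rather than $1/|t|$) requires exploiting the automorphism structure \eqref{eq:alphas} to convert the problem to the type-$0$ situation, and then propagating completely explicit constants — with $|t|\geq 100$ plugged in at every step — all the way to the clean numerical threshold $1.047\cdot10^{13}$ without any slack that would break the later comparison with Proposition~\ref{prop:approx}. I would expect to use a Padé-approximant identity (two explicit polynomials $A(X),B(X)$ with $A - \alpha B$ vanishing to high order at the relevant root) evaluated at $x/y$ to manufacture the second, competing approximation needed to close the loop.
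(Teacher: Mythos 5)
Your Step 1 for the type-$0$ relative bound is essentially the paper's own first step: combine $|\beta^{(0)}|<8.86/(|t||y|^3)$ with $|\alpha^{(0)}|<1.01/|t|$ and use $|x|\geq 3$ to get $|y|\gtrsim 3|t|$.

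The gap is in your Step 2, the type-$3$ relative bound. You correctly spot that the naive triangle inequality only gives $|x|\approx|y|$ here, but then you speculate about a $\phi^2$-conversion to the type-$0$ case and about juggling the product $\prod_i|\beta^{(i)}|=1$, neither of which is what closes the argument. The missing idea is elementary: from $\alpha^{(3)}=1+L(2.16/|t|)$ and Lemma~\ref{lem:betajsmall} one gets
\[
 |x-y|\;\leq\;\frac{2.16}{|t|}|y|+\frac{8.86}{|t||y|^3}\;<\;\frac{2.27}{|t|}|y|,
\]
and the crucial observation is that $x-y\in\qi$ is a \emph{nonzero} quadratic integer (if $x=y$ then $F_t(x,x)=-4x^4=\mu$, impossible since $|{-4x^4}|\geq 4\cdot 3^4$), hence $|x-y|\geq 1$. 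That immediately gives $|y|>|t|/2.27>0.44|t|$. No automorphism reduction and no Padé machinery is needed for the relative bound; the point is simply that an auxiliary nonzero element of $\qi$ has absolute value at least $1$. (The same device already appears in the paper's type-$0$ Step 2, where $tx+y\neq 0$ and $|tx+y|\geq 1$ give $|y|>|t|^2/5.02$.)

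Your Step 3 gestures at the right family of ideas (Padé approximants, a gap principle from $|p_1q_2-p_2q_1|\geq 1$) but remains a sketch of an intention rather than a proof, and in particular it describes a single competition between the solution and one auxiliary approximant, whereas the paper's argument is an explicit bootstrap: for $k=3,4,\dots,11$ one takes a degree-$(k-1)$ Padé approximation $U(1/t)/V(1/t)$ of a high-precision truncated series $B(1/t)\approx\alpha^{(j)}$, forms the quadratic integer $t^{k-1}V(1/t)\,x - t^{k-1}U(1/t)\,y$, shows by a direct evaluation of $F_t$ that it cannot vanish (the hypothetical equality $x=A(1/t)y$ would force $|F_t(x,y)|>1$), and then uses $\geq 1$ on its absolute value together with the previous step's bound $|y|>|t|^{k-1}/c_0$ to upgrade to $|y|>|t|^k/c$. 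Iterating these steps with explicit constants, and then plugging in $|t|\geq100$, is what produces $1.047\cdot10^{13}$; your sketch does not carry out any such iteration, so the absolute lower bound is not established by the proposal as written.
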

 
In fact, we will do several steps and  prove bounds of the shape
\[
	|y|
	> \frac{|t|^{k}}{c},
\]
for larger and larger integers $k$. This is based on the ideas in \cite[Section 5]{Ziegler2005}. First, we consider the case where the solution is of type 0. Afterwards, we do analogous computations for the type 3 case.

\subsection{Lower bound for $y$ of type 0}\label{sec:lowerbound-type0}

Let $|t|\geq 100$ and $(x,y)\in \qi^2$ be a solution of type 0 and $\min\{|x|,|y|\}\geq 3$.

\begin{step}{1}\label{step:type0-1}
We combine Lemma~\ref{lem:betajsmall} and the approximation of $\alpha=\alpha^{(0)}$ from Lemma~\ref{lem:roots-approx}:
\begin{align*}
	|x|-\frac{1.01}{|t|}|y|
	\leq \left| x - \left(  - \frac{1}{t} + L\left( \frac{5.01}{|t|^3}\right) \right)y \right|
	= |x - \alpha y| 
	< \frac{8.86}{|t|\cdot |y|^3}.
\end{align*}
Since we are assuming $\min\{|x|,|y|\}\geq 3$, this implies
\begin{align*}
	3	
	\leq |x|
	< \frac{1.01}{|t|}|y| + \frac{8.86}{|t|\cdot |y|^3}
	< \frac{1.12}{|t|} |y|
\end{align*}
and we get
\begin{equation*}
	|y|
	> 2.67 \cdot |t|.
\end{equation*}
In particular, we have proven equation~\eqref{eq:lem:lbfory_rel} in the type 0 case.
\end{step}

\begin{step}{2}\label{step:type0-2}
As in the previous step, we combine Lemma~\ref{lem:roots-approx} and Lemma~\ref{lem:betajsmall}, but now we multiply the expressions with an extra factor $|t|$:
\begin{align*}
	|tx + y|-\frac{5.01}{|t|^2}|y|
	\leq \left| t x - \left(  - 1 + L\left( \frac{5.01}{|t|^2}\right) \right)y \right|
	= |t x - t \alpha y| 
	< \frac{8.86}{|y|^3}.
\end{align*}
If $tx+y=0$, then plugging $y=-tx$ into equation \eqref{eq:unity-steps} yields $x^4 (1-5t^2)=\mu$. Considering the absolute values, this immediately leads to a contradiction for $t \neq 0$.
Thus we may assume that $tx+y$ is a non-zero element of $\qi$, which means that it has absolute value at least 1 and we obtain from the above inequality
\begin{align*}
	1	
	\leq |tx + y|
	< \frac{5.01}{|t|^2}|y| + \frac{8.86}{|y|^3}
	< \frac{5.01}{|t|^2}|y| + \frac{8.86}{( 2.67 \cdot |t|)^4}|y|
	< \frac{5.02}{|t|^2}|y|,
\end{align*}
which implies
\begin{equation*}
	|y|
	> \frac{|t|^2}{5.02}.
\end{equation*}
\end{step}

The trick in Step~\ref{step:type0-2} to multiply the equation with $|t|$ worked because of the gap in the exponents of $|t|$ in  the main and the $L$-term in the approximation of $\alpha$. This is now exhausted and we need a new idea. We use the idea from \cite[Proof of Lemma 7]{Ziegler2005}, which is to replace the series expansion of $\alpha$ by Padé approximations. Moreover, we will need a higher precision approximation of $\alpha$. Analogously to Lemma~\ref{lem:roots-approx} one can prove (see \href{https://cocalc.com/IngridVukusic/QuarticFamily/Rouche}{code}) the following.

\begin{lem}\label{lem:betterApprox}
	Let $B(1/t)$ be the approximation to $\alpha$ obtained by applying $15$ steps of Newton's method and with precision $O(|t|^{-31})$, i.e.
\[
	B(1/t) 
	= -t^{-1} + 5 t^{-3} - 46 t^{-5} + 509 t^{-7} + \dots 
		+ 1821914025180536 t^{-29}.
\]
Then we have 
\[	
	|\alpha - B(1/t)| < \frac{2.71 \cdot 10^{16}}{|t|^{31}}.
\] 
\end{lem}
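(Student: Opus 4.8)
The plan is to mimic exactly the Rouché's theorem argument used in the proof of Lemma~\ref{lem:roots-approx}, but with the higher-order polynomial approximation $B(1/t)$ in place of the first-order approximation $-1/t$. First I would set $h(z) := f_t(B(1/t) + z)$ and aim to show that $h$ has a root in the open disc of radius $r := 2.71 \cdot 10^{16}/|t|^{31}$ centred at $0$; since $B(1/t)$ is an honest element of $\Qd$ (a polynomial in $1/t$), this immediately yields $\alpha = B(1/t) + L(r)$ for the root $\alpha$ close to zero. The Rouché comparison we want is $|h(0)| < |h(z) - h(0)|$ for all $z$ with $|z| = r$.

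For the small side, $h(0) = f_t(B(1/t))$: since $B(1/t)$ agrees with the true power-series expansion of $\alpha$ up to and including the $t^{-29}$ term, plugging it into $f_t$ and expanding in powers of $s = 1/t$ produces a series whose lowest-order surviving term is of order $s^{31}$ (this is the whole point of running $15$ Newton steps — each step roughly doubles the precision, and $15$ steps on a degree-$4$ polynomial easily reaches precision $O(s^{31})$). One should extract an explicit constant $c_1$ with $|f_t(B(1/t))| \le c_1/|t|^{31}$ using $|t| \ge 100$ to bound the tail. For the large side, $h(z) - h(0) = f_t(B(1/t)+z) - f_t(B(1/t))$ is a polynomial in $z$ with no constant term; its linear coefficient is $f_t'(B(1/t))$, and since $B(1/t) \approx -1/t$ and $f_t'(X) = 4X^3 - 3tX^2 - 12X + t$ one computes $f_t'(B(1/t)) = t + O(1)$, so on $|z| = r$ the linear term contributes roughly $|t| \cdot r = 2.71 \cdot 10^{16}/|t|^{30}$, while all higher-degree terms in $z$ are smaller by factors of at least $r|t| \ll 1$ (since $r|t| = 2.71\cdot 10^{16}/|t|^{30}$ is tiny for $|t| \ge 100$). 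Hence $|h(z) - h(0)| \ge |t|\, r \,(1 - o(1)) > c_1/|t|^{31}$, provided the constant $2.71 \cdot 10^{16}$ was chosen large enough relative to $c_1$; concretely one needs $2.71 \cdot 10^{16}/|t|^{30} - (\text{error}) > c_1/|t|^{31}$, i.e. $2.71 \cdot 10^{16} > c_1/|t| + |t|\cdot(\text{error terms})$, which holds comfortably.

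Finally, as in Lemma~\ref{lem:roots-approx}, I would note that $h_1(z) := h(z) - h(0)$ has the obvious root $z = 0$ inside the disc, that $h_1$ and the constant $h(0)$ are entire, and invoke Rouché to conclude $h(z) = h_1(z) + h(0)$ has a root with $|z| < r$; translating back gives $f_t$ a root within $r$ of $B(1/t)$, and this root must be $\alpha$ (the one near $0$) since $r$ is far smaller than the $\approx 0.96$ separation between roots established in Lemma~\ref{lem:roots-approx}.

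The main obstacle is purely computational rather than conceptual: one must actually carry out the power-series expansion of $f_t(B(1/t))$ to verify that the $s^{31}$-order term is the first nonzero one and to pin down the numerical constant $c_1$, and then check that the chosen radius constant $2.71 \cdot 10^{16}$ is consistent with the Rouché inequality for all $|t| \ge 100$. This is exactly the kind of bookkeeping the authors delegate to Sage, so I would perform the symbolic expansion and the constant-chasing in code (as referenced by the linked Rouché notebook) and merely record the resulting inequalities in the written proof, stating that the details are "analogous to the proof of Lemma~\ref{lem:roots-approx}."
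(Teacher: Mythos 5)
Your proposal takes exactly the same route as the paper, which simply says the lemma follows ``analogously to Lemma~\ref{lem:roots-approx}'' and delegates the bookkeeping to the linked Sage notebook. So at the level of ideas you have reconstructed the intended argument.

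One small but worth-flagging slip in your order-of-magnitude accounting: you claim $f_t(B(1/t))$, expanded in $s=1/t$, has lowest-order surviving term $O(s^{31})$. In fact $f_t(B) = -f_t'(\alpha)(\alpha - B) + O((\alpha-B)^2)$, and since $f_t'(\alpha) \approx t$ while $\alpha - B = O(s^{31})$, the leading term of $f_t(B)$ is $O(s^{30})$, not $O(s^{31})$. Consequently the Rouché comparison on $|z| = r$ with $r = 2.71\cdot 10^{16}/|t|^{31}$ reads
\[
  |h(z)-h(0)| \gtrsim |t|\,r \;=\; \frac{2.71\cdot 10^{16}}{|t|^{30}}
  \qquad\text{versus}\qquad
  |h(0)| \approx \frac{c_1}{|t|^{30}},
\]
so the inequality you actually need is $2.71\cdot 10^{16} > c_1$ (up to lower-order corrections), with no extra factor of $1/|t|$ giving you slack, contrary to what you wrote (``$c_1/|t|$''). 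Since the $s^{31}$ coefficient of $\alpha$ is of the same rough magnitude as the $s^{29}$ coefficient $\approx 1.8\cdot 10^{15}$, one expects $c_1$ of order $10^{16}$, so the margin is genuinely tight and has to be verified numerically rather than waved through. You say you would do exactly this computation in Sage, so the method is sound; just be aware that the constant check is closer than your heuristic suggested.
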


To compute Padé approximations $A(1/t)=U(1/t)/V(1/t)$ to $B(1/t)$ we can use Sage's power series method \verb|pade()| with the variable $s:=1/t$.

\begin{step}{3}\label{step:type0-3}
In Step 3 we compute the Padé approximation where the degrees of $U$ and $V$ are bounded by 2.
We obtain 
\[
	\alpha 
	\approx B(1/t) 
	\approx A(1/t)
	= \frac{U(1/t)}{V(1/t)} 
	= \frac{-\frac{1}{t}}{\frac{5}{t^2} + 1}.
\]
The constant that we will obtain in this step will mostly depend on the quality of this approximation. Therefore, we first compute the approximation error in the following sense:
\begin{align*}
	|U(1/t) - B(1/t) V(1/t)|
	= \left| \frac{21}{t^5} - \frac{279}{t^7} + \dots  \right|
	< \frac{21.028}{|t|^5},
\end{align*}
where the dots stand for a finite expression, which was actually computed and then estimated using $|t|\geq 100$.

Next, we get $x$ and $y$ involved and compute some other approximations. First, we use Lemma~\ref{lem:betajsmall} and the bound from Step~\ref{step:type0-2}:
\begin{equation*}\label{eq:step3-xalphay}
	|x - \alpha y|
	< \frac{8.86}{|t|\cdot |y|^3}
	< \frac{8.86}{|t|(|t|^2/5.02)^{4}}|y|
	< \frac{5627}{|t|^9}|y|.
\end{equation*}
With this estimate and Lemma~\ref{lem:betterApprox} we get
\begin{align*}
	|x - B(1/t) y|
	\leq |x - \alpha y| + |\alpha - B(1/t)|\cdot |y|
	< \frac{5627}{|t|^9}|y| + \frac{2.71 \cdot 10^{16}}{|t|^{31}} |y|.
\end{align*}
We need one more intermediate estimate:
\[
	|V(1/t)|
	= \left| 1 + \frac{5}{t^2} \right|
	< 1.01.
\]
Finally, we combine these estimates to find a useful upper bound for an imaginary quadratic integer:
\begin{align} \label{eq:step3-1}
	|(t^2+5)x + t y|
	&=|t^2 V(1/t) x - t^2 U(1/t)y|\\
	&= \left| t^2 V(1/t) x - t^2 \left( V(1/t)B(1/t) + L\left(\frac{21.028}{|t|^5} \right)\right) y \right| \nonumber \\
	&\leq  \frac{21.028}{|t|^3}|y| +  |t|^2 |V(1/t)| |x - B(1/t)y| \nonumber \\
	&< \frac{21.028}{|t|^3}|y| +  |t|^2 \cdot 1.01 \cdot 
		\left( \frac{5627}{|t|^9} + \frac{2.71 \cdot 10^{16}}{|t|^{31}} \right) |y| \nonumber \\
	&< \frac{21.028}{|t|^3}|y| +  \left(\frac{5684}{|t|^7} + \frac{2.74 \cdot 10^{16}}{|t|^{29}}\right)|y| \nonumber \\
	&< \frac{21.03}{|t|^3}|y|. \nonumber
\end{align}

Assume for a moment that the expression on the left vanishes. Then we have $x=-t/(t^2+5)y$. Computing $F_t(x,y)$ and using $|y|> |t|^2/5.02$ we obtain
\begin{align} \label{eq:Ftabsvalue}
	|F_t(x,y)|
 	&=\left| F_t \left( \frac{-t}{t^2 + 5} y , y \right)\right|
 	= \left| f_t \left( \frac{-t}{t^2 + 5} \right)\right| \cdot |y|^4\\
	&= \left| \frac{21 t^4 + 225 t^2 + 625 }{(t^2 + 5)^4}
		\right| \cdot |y|^4
	> \frac{21|t|^4 - 225|t|^2 - 625}{(|t|^2 +5)^4}
		\left(\frac{|t|^2}{5.02}\right)^4. \nonumber
\end{align}
For $|t|\geq 100$ the last expression is larger than 1, which is a contradiction.

Thus we may assume that the expression bounded in \eqref{eq:step3-1} is non-zero. Since it is clearly an imaginary quadratic integer, it has absolute value at least 1. This yields
\[
	1 < \frac{21.03}{|t|^3}|y|,
\]
which implies
\begin{equation*}
	|y|
	> \frac{|t|^3}{21.03}.
\end{equation*}
\end{step}

\begin{steps}{4--9}\label{steps:type0-4-9}
At each Step $k$ ($4 \leq k \leq 9$) we start with a bound of the form 
\begin{equation*}
	|y|
	> \frac{|t|^{k-1}}{c_0},
\end{equation*}
where in Step 4 we have $c_0 = 21.03$.
Then we compute a Padé approximation $\alpha \approx A(1/t) = U(1/t)/V(1/t)$ where $U,V \in \Z[X]$ are polynomials of degree at most $k-1$. 

Analogously to Step~\ref{step:type0-3}, it we can then compute a constant $c_1$ such that
\[
	|U(1/t) - B(1/t) V(1/t)|
	\leq \frac{c_1}{|t|^{2 k -1}}.
\]
Next, we can estimate
\[
	|x - \alpha y|
	< \frac{8.86}{|t|\cdot |y|^3}
	< \frac{8.86}{|t|(|t|^{k-1}/c_0)^{4}}|y|
	= \frac{c_2}{|t|^{4k-3}}|y|,
\]
with $c_2 = 8.86 \cdot c_0^4$, and
\[
	|x - B(1/t)y|
	< \left( \frac{c_2}{|t|^{4k-3}} + \frac{2.71 \cdot 10^{16}}{|t|^{31}} \right) |y|.
\]
Finally, we compute an upper bound 
\[
	|V(1/t)| \leq c_3.	
\]
Since we are assuming $|t|\geq 100$, it turns out that $d$ is always roughly the size of the constant term in $V$.

Then, by an analogous computation to \eqref{eq:step3-1} we obtain
\begin{align}\label{eq:stepk-main}
	|t^{k-1} V(1/t) x - t^{k-1} U(1/t) y|
	< \left( \frac{c_1}{|t|^{k}} + \frac{c_2 c_3}{|t|^{3k-2}} + \frac{2.71 \cdot 10^{16} \cdot c_3}{|t|^{31 + 1 - k}}\right) |y|
	\leq \frac{c}{|t|^{k}} |y|,
\end{align}
where $c = c_1 + c_2 c_3 \cdot 100^{-(2k-2)} + 2.71 \cdot 10^{16} \cdot c_3 \cdot 100^{-(32 - 2k)}$, which is roughly of the size of $c_1$.

To finish the step, we only need to check that the left hand side cannot vanish. 
If it did, we would have $x= A(1/t) y$. To show that this is impossible, we do an analogous computation to \eqref{eq:Ftabsvalue}:
\begin{align*}
	|F_t(x,y)|
 	=\left| f_t \left( \frac{t^{k-1}U(1/t)}{t^{k-1}V(1/t)} \right)\right| \cdot |y|^4
	&= \left| \frac{P(t)}{(t^{k-1}V(1/t))^4}
		\right| \cdot |y|^4 \\
	&> \frac{|P(t)|}{|t^{k-1}V(1/t)|^4}
		\left(\frac{|t|^{k-1}}{c_0}\right)^4. \nonumber
\end{align*}
It turns out that $P(t) = F_t(t^{k-1}U(1/t), t^{k-1}V(1/t))$ is always a polynomial of degree $2k-2$ and $t^{k-1}V(1/t)$ is a polynomial of degree $k-1$. Thus, we have a lower bound of the order $|t|^{2k-2}$. Computing the coefficients and estimating using $|t|\geq 100$, one can indeed show in every step that $|F_t(A(1/t)y,y)|>1$, a contradiction.

Thus we may assume that the left hand side of \eqref{eq:stepk-main} is at least 1, which implies
\begin{equation}\label{eq:lbyfromsteps}
	|y|
	> \frac{|t|^{k}}{c}
\end{equation}
and we set $c_0 = c$ for the next step.

Always rounding up with a 4-digit precision when computing $c$, we obtain the values presented in Table~\ref{tab:steps} (see \href{https://cocalc.com/IngridVukusic/QuarticFamily/LowerBoundForY}{code}). Moreover, the table shows the lower bounds for $|y|$ that follow immediately from setting $|t|=100$ in \eqref{eq:lbyfromsteps}. The lower bound for $|y|$ obtained in Step 9 proves equation~\eqref{eq:lem:lbfory_abs} of Lemma~\ref{lem:lbfory} in the type 0 case.
\end{steps}

\begin{table}[h]
\caption{Constants obtained in Steps 4--9 for type 0}\label{tab:steps}
\resizebox{\textwidth}{!} {
\begin{tabular}{ccccccccc}
\hline
$k$ & $4$ & $5$ & $6$ & $7$ & $8$ & $9$ & $10$ & $11 $ \\ \hline
$c$ & $429.8$ & $2436$ & $4210$ & $1.863 \cdot 10^5$ & $3.242 \cdot 10^{6}$ & $5.915 \cdot 10^{6}$ & $8.066 \cdot 10^{7} $ & $4.726 \cdot 10^{8} $ \\
$|y| > \ldots$ & $2.327 \cdot 10^5$ & $4.105 \cdot 10^{6}$ & $2.375 \cdot 10^{8}$ & $5.369 \cdot 10^{8}$ & $3.084 \cdot 10^{9}$ & $1.691 \cdot 10^{11}$ & $1.240 \cdot 10^{12} $ & $2.116 \cdot 10^{13} $ \\ \hline
\end{tabular}
}
\end{table}

\subsection{Lower bound for $y$ of type 3}\label{sec:lowerbound-type3}

Now assume that $(x,y)$ with $\min\{|x|,|y|\}\geq 3$ is a solution of type 3. We proceed analogously to the above subsection.

\begin{step}{1}\label{step:type3-1}
We combine Lemma~\ref{lem:betajsmall} and the approximation of $\alpha^{(3)}$ from Lemma~\ref{lem:roots-approx}:
\begin{align*}
	|x - y| - \frac{2.16}{|t|}|y|
	\leq \left| x - \left(  1 + L\left( \frac{2.16}{|t|}\right) \right)y \right|
	= |x - \alpha^{(3)} y| 
	< \frac{8.86}{|t|\cdot |y|^3}.
\end{align*}
Since we are assuming $|y|\geq 3$, this implies
\begin{align*}
	|x - y|
	\leq \frac{2.16}{|t|}|y| + \frac{8.86}{|t|\cdot |y|^3}
	< \frac{2.27}{|t|} |y|.
\end{align*}
If $x = y$, then plugging into $F_t(x,y) = \mu$ yields $-4x^4 = \mu$, which is impossible. Thus we have $|x-y|\geq 1$ and the above inequality implies
\begin{equation*}
	|y|
	> \frac{|t|}{2.27}
	> 0.44 |t|.
\end{equation*}
Thus we have proven equation~\eqref{eq:lem:lbfory_rel} in the type 3 case.
\end{step}

For the further steps, we need a higher precision approximation for $\alpha^{(3)}$. We can compute it in Sage via $\alpha^{(3)}=-(\alpha+1)/(\alpha-1)$, where we use an approximation to $\alpha$ obtained with Newton's method.
Analogously to Lemma~\ref{lem:roots-approx} and Lemma~\ref{lem:betterApprox}, one can prove (see \href{https://cocalc.com/IngridVukusic/QuarticFamily/Rouche}{code}) the following.

\begin{lem}\label{lem:betterApproxalpha3}
Let $B_3(1/t)$ be the approximation to $\alpha^{(3)}$ given by
\[
	B_3(1/t) 
	= 1 -2t^{-1} + 2 t^{-2} + 8 t^{-3} - 18 t^{-4} + \dots 
		+ 1435829041889280 t^{-29}.
\]
Then we have 
\[	
	|\alpha - B_3(1/t)| < \frac{9.84 \cdot 10^{15}}{|t|^{30}}.
\] 
\end{lem}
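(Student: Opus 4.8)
The plan is to follow verbatim the strategy used for Lemma~\ref{lem:roots-approx} and Lemma~\ref{lem:betterApprox}: locate $\alpha^{(3)}$ by Rouché's theorem. (By construction $B_3$ is the degree-$29$ truncation, in the variable $s=1/t$, of $g(\alpha)=-(\alpha+1)/(\alpha-1)$ with $g(z):=-(z+1)/(z-1)$, obtained by substituting the Newton iterate for $\alpha$ and truncating; the bound to be proved is the one on $\alpha^{(3)}-B_3(1/t)$.) Put $h(z):=f_t(B_3(1/t)+z)$ and $r:=9.84\cdot 10^{15}/|t|^{30}$. As in the proof of Lemma~\ref{lem:roots-approx}, it suffices to show $|h(0)|<|h(z)-h(0)|$ for all $|z|=r$: then $h_1(z):=h(z)-h(0)$, which vanishes at $z=0$, and the constant $h(0)$ satisfy the hypothesis of Rouché's theorem, so $h=h_1+h(0)$ also has a zero in $|z|<r$, i.e.\ $f_t$ has a root within distance $r$ of $B_3(1/t)$. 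By Lemma~\ref{lem:roots-approx} the roots $\alpha^{(0)},\alpha^{(1)},\alpha^{(2)}$ lie near $-1/t,\,-1,\,t$, hence at distance $>0.9$ from $B_3(1/t)=1+L(c/|t|)$ once $|t|\geq 100$, while $r$ is minuscule; so that root is necessarily $\alpha^{(3)}$, which gives $|\alpha^{(3)}-B_3(1/t)|<r$.

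\emph{Upper bound for $|h(0)|$.} Since $B_3$ is the truncation of the power series of the actual root, write $\alpha^{(3)}=B_3(s)+E(s)$ with $E(s)=O(s^{30})$; expanding $0=f_t(\alpha^{(3)})=f_t(B_3)+f_t'(B_3)\,E+\tfrac12 f_t''(B_3)\,E^2+\cdots$ and using $f_t'(B_3(s))=-2t+O(1)$ (from $f_t'(X)=4X^3-3tX^2-12X+t$ and $B_3=1+O(s)$) gives $f_t(B_3(s))=f_t'(B_3(s))\cdot O(s^{30})+\cdots=O(s^{29})$. As $f_t(B_3(s))$ is an explicit Laurent polynomial in $s$, all of its terms of degree $<29$ vanish, and substituting the displayed $B_3$ and estimating with $|t|\geq 100$ yields $|h(0)|=|f_t(B_3(1/t))|<C_1/|t|^{29}$ with an explicit constant $C_1$.

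\emph{Lower bound for $|h(z)-h(0)|$ on $|z|=r$.} Expanding,
\[
	h(z)-h(0)=f_t'(B_3(1/t))\,z+\tfrac12 f_t''(B_3(1/t))\,z^2+\tfrac16 f_t'''(B_3(1/t))\,z^3+z^4 .
\]
Here $f_t'(B_3(1/t))=-2t+O(1)$, so $|f_t'(B_3(1/t))|>1.9|t|$ for $|t|\geq 100$, while the $z^2,z^3,z^4$ terms are bounded by (a constant times $|t|$) times $r^2,r^3,r^4$ and are negligible against $|t|r$ because $r=O(|t|^{-30})$. Hence $|h(z)-h(0)|>1.8\,|t|\,r>1.77\cdot 10^{16}/|t|^{29}$. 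Comparing with the upper bound, $|h(0)|<|h(z)-h(0)|$ holds as soon as $C_1<1.77\cdot 10^{16}$ — which is exactly how the radius $9.84\cdot 10^{15}$ is calibrated — and this last inequality is a finite computation carried out in Sage (as for Lemma~\ref{lem:betterApprox}).

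I expect the only genuine work to be these explicit evaluations of $f_t,f_t',f_t'',f_t'''$ at $B_3(1/t)$ as (Laurent) polynomials in $1/t$, together with the extraction of the constants $C_1$ and $c$; everything else is the same Rouché argument as before. An alternative that sidesteps Rouché is to propagate the bound of Lemma~\ref{lem:betterApprox} through $g$: since $g'(z)=2/(z-1)^2\approx 2$ on the short segment joining $\alpha$ and $B(1/t)$ (both close to $0$), one has $|g(\alpha)-g(B(1/t))|<2.1\cdot 2.71\cdot 10^{16}/|t|^{31}$, and combining this with the finite computation that $|g(B(1/t))-B_3(1/t)|=O(|t|^{-30})$ gives $|\alpha^{(3)}-B_3(1/t)|<9.84\cdot 10^{15}/|t|^{30}$ after using $|t|\geq 100$.
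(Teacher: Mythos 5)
Your primary argument reproduces exactly the Rouch\'e strategy the paper invokes for this lemma (which it only states is ``analogous to Lemma~\ref{lem:roots-approx} and Lemma~\ref{lem:betterApprox}, see the code''): set $h(z)=f_t(B_3(1/t)+z)$, compare $|h(0)|$ against the dominant linear term $f_t'(B_3)z\approx -2tz$ on $|z|=9.84\cdot 10^{15}/|t|^{30}$, and identify the resulting root as $\alpha^{(3)}$, so this is the same approach and it is correct (you even rightly note that the paper's displayed inequality should read $\alpha^{(3)}$ in place of $\alpha$). Your alternative route of propagating Lemma~\ref{lem:betterApprox} through the M\"obius map $g(z)=-(z+1)/(z-1)$ is a valid and slightly slicker variant that the paper does not take, but both ultimately reduce to the same explicit polynomial evaluations in Sage.
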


The remaining steps are analogous to Step~\ref{step:type0-3} and Steps~\ref{steps:type0-4-9} from the previous subsection. We only need to replace $\alpha$ by $\alpha^{(3)}$, $B(1/t)$ by $B_3(1/t)$ and $2.71 \cdot 10^{16}/|t|^{31}$ by ${9.84 \cdot 10^{15}}/{|t|^{30}}$.

\begin{steps}{2--10}\label{steps:type3-2-10}
We start with $k=2$ and $c_0 = 2.27$.
The results from each step (see \href{https://cocalc.com/IngridVukusic/QuarticFamily/LowerBoundForY}{code}) are presented in Table~\ref{tab:stepsalpha3}. The lower bound for $|y|$ obtained in Step 10 proves equation~\eqref{eq:lem:lbfory_abs} of Lemma~\ref{lem:lbfory} in the type 3 case. Thus we have completed the proof of Lemma~\ref{lem:lbfory}.
\end{steps}

\begin{table}[h]
\caption{Constants obtained in Steps 4--9 for type 3}\label{tab:stepsalpha3}
\resizebox{\textwidth}{!} {
\begin{tabular}{ccccccccccc}
\hline
$k$ & $2$ & $3$ & $4$ & $5$ & $6$ & $7$ & $8$ & $9$ & $10$ & $11$ \\ \hline 
$c$ & $10.14$ & $42.48$ & $868.0$ & $4921$ & $8503$ & $3.762 \cdot 10^5$ & $6.549 \cdot 10^{6}$ & $1.195 \cdot 10^{7}$ & $1.629 \cdot 10^{8}$ & $9.547 \cdot 10^{8}$ \\ 
$|y| > \ldots$ & $986.2$ & $23540$ & $1.152 \cdot 10^5$ & $2.032 \cdot 10^{6}$ & $1.176 \cdot 10^{8}$ & $2.658 \cdot 10^{8}$ & $1.527 \cdot 10^{9}$ & $8.370 \cdot 10^{10}$ & $6.138 \cdot 10^{11}$ & $1.047 \cdot 10^{13} $ \\ \hline
\end{tabular}
}
\end{table}

\section{The hypergeometric method}\label{sec:hypergeom}

The hypergeometric method has its name from the hypergeometric function, which in this context is used to construct very good approximations of a root $\alpha$ of $f_t(X)$. With these approximations one can then obtain an effective irrationality measure for $\alpha$ by the following elementary but ingenious lemma. We have borrowed it from
\cite[Lemma 2.7]{Heuberger2006}, which is a generalization of
\cite[Lemma 2.8]{ChenVoutier1997}.
However, the idea is much older and goes back to Thue and Siegel, see \cite{Chudnovsky1983} for a historic overview.

\begin{knownlem}\label{lem:approx-irrat}
Let $\alpha\in\mathbb{C}$ and suppose that there exist real numbers $k_0,l_0> 0$ and $E, Q > 1$ such that for all positive integers $r$ there are integers $p_r , q_r \in \qi$ with $|q_r| < k_0 Q^r$ and $|q_r\alpha- p_r|\leq l_0 E^{-r}$ satisfying $p_rq_{r+1}\neq p_{r+1}q_r$ for all $r$. 
Then for any integers $p, q \in \qi$ with $|q|\geq 1/(2l_0)$, we have
\[
	\left|\alpha-\frac{p}{q}\right|
	>\frac{1}{c|q|^{\kappa+1}}, 
	\quad \text{where} \quad 
	c=2k_0Q(2l_0E)^\kappa
	\quad \text{and} \quad 
	\kappa=\frac{\log Q}{\log E}.
\]
\end{knownlem}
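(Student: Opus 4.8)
The plan is to prove Lemma~\ref{lem:approx-irrat} by a classical ``gap principle'' style argument that pits the sequence of good rational approximations $p_r/q_r$ against the putative approximation $p/q$. First I would fix integers $p,q\in\qi$ with $|q|\geq 1/(2l_0)$ and assume, for contradiction or simply for the estimate, that $\alpha$ is very well approximated by $p/q$; the goal is to show $|\alpha - p/q| > 1/(c|q|^{\kappa+1})$ with the stated $c$ and $\kappa$. The key is to choose the index $r$ optimally so that $|q_r|$ is comparable to $|q|$: concretely, pick $r$ to be the smallest positive integer with $k_0 Q^r \geq 2l_0 E \cdot |q|$ (or a similar threshold), which by minimality also gives an upper bound $k_0 Q^r < 2l_0 E Q \cdot |q|$. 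This balances the two competing error terms.

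Next I would use the hypothesis $p_r q_{r+1}\neq p_{r+1}q_r$ to guarantee that, among $r$ and $r+1$, at least one index $s$ satisfies $p_s q \neq p q_s$, so that $p_s q - p q_s$ is a nonzero element of $\qi$ and hence has absolute value at least $1$. Then the triangle inequality applied to
\[
	|p_s q - p q_s|
	\leq |q|\cdot|q_s\alpha - p_s| + |q_s|\cdot|q\alpha - p|
\]
gives
\[
	1 \leq |q|\cdot l_0 E^{-s} + |q_s|\cdot|q|\cdot\left|\alpha - \frac{p}{q}\right|.
\]
Rearranging isolates $|\alpha - p/q|$, and plugging in the bounds $|q_s| < k_0 Q^{s}$ (using $s\in\{r,r+1\}$, so $|q_s| < k_0 Q^{r+1}$) together with the choice of $r$ that makes $l_0 E^{-s}|q| \leq 1/2$ yields $1/2 \leq |q_s||q||\alpha - p/q|$, hence $|\alpha - p/q| \geq 1/(2|q_s||q|)$. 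Finally, converting $|q_s| < k_0 Q^{r+1} \leq Q\cdot k_0 Q^r < Q \cdot 2l_0 E Q |q| = 2k_0 l_0 E Q^2 |q|$ — wait, more carefully, $k_0 Q^r < 2l_0 E Q|q|$, so $|q_s| < 2l_0 E Q^2 |q|$ — and then one has to match this against the target exponent $\kappa+1$ by writing $Q^r$ as a power of $|q|$: from $k_0 Q^r \approx 2l_0 E|q|$ one gets $r\log Q \approx \log(2l_0 E|q|)$, so $E^{-s}$ can be re-expressed as a power $|q|^{-\log E/\log Q} = |q|^{-1/\kappa}$, and tracking the exponents carefully produces the clean bound with $c = 2k_0 Q(2l_0 E)^\kappa$ and $\kappa = \log Q/\log E$.

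The main obstacle I anticipate is the bookkeeping needed to get exactly the constant $c = 2k_0 Q (2l_0 E)^\kappa$ rather than something merely of the same shape; this requires being slightly clever about which threshold defines $r$ and resisting the temptation to use $s=r+1$ bounds where $s=r$ suffices. In particular one should use $|q_s|\cdot|q\alpha-p|$ with the \emph{smaller} available data and control $E^{-s}$ and $Q^{s}$ with matching indices, so that the product $|q_s|\cdot E^{-?}$ collapses to exactly $|q|^{-\kappa}$ up to the constant. A secondary subtlety is verifying that the hypothesis $|q|\geq 1/(2l_0)$ is exactly what is needed to ensure the chosen $r$ is a positive integer (so that the approximation hypotheses, which are stated only for positive $r$, apply). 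I would also double-check the non-vanishing step: since it is only assumed that $p_r q_{r+1}\neq p_{r+1}q_r$, one argues that if both $p_r q = p q_r$ and $p_{r+1} q = p q_{r+1}$ held then $p_r q_{r+1} = p_{r+1} q_r$, a contradiction; hence at least one index works.
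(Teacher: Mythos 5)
Your overall skeleton is the standard one (and the one used in Chen--Voutier and Heuberger, which is where the paper sources this lemma): pick a good index, use $p_rq_{r+1}\neq p_{r+1}q_r$ to find $s\in\{r,r+1\}$ with $p_sq\neq pq_s$, then run the triangle inequality on $p_sq - pq_s = q(q_s\alpha - p_s) - q_s(q\alpha - p)$ and balance the two terms.

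The genuine gap is in your concrete choice of $r$. You propose ``the smallest positive integer with $k_0Q^r \geq 2l_0E|q|$.'' But the constraint that actually needs to hold to make the argument close is $l_0E^{-s}|q|\leq 1/2$, i.e.\ $E^r\geq 2l_0|q|$, and the $Q$-threshold does \emph{not} imply this. Indeed, from $k_0Q^r\approx 2l_0E|q|$ you get $E^r\approx (2l_0E|q|/k_0)^{1/\kappa}$, so
\[
  |q|\,l_0E^{-s} \approx l_0|q|\bigl((2l_0E/k_0)|q|\bigr)^{-1/\kappa}
  = \text{const}\cdot |q|^{1-1/\kappa},
\]
which, whenever $\kappa>1$ (and in this paper's application one has $Q=2.94|t| > E=|t|/13.27$, hence $\kappa>1$ always), grows without bound as $|q|\to\infty$. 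So for large $|q|$ the ``error'' term you were hoping to bound by $1/2$ actually exceeds $1$, and the inequality $1\leq |q|l_0E^{-s}+|q_s||q\alpha-p|$ gives no information. Your final exponent bookkeeping (``$E^{-s}$ re-expressed as $|q|^{-1/\kappa}$'') is consistent with this wrong threshold, not with the target $c=2k_0Q(2l_0E)^\kappa$, which requires $Q^r$ (not $E^{-s}$) to be compared to a power of $|q|$.

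The fix is to threshold on $E^r$: choose $r$ to be the smallest positive integer with $E^r\geq 2l_0|q|$. The hypothesis $|q|\geq 1/(2l_0)$, i.e.\ $2l_0|q|\geq 1=E^0$, is exactly what guarantees such a positive $r$ exists. Then $l_0E^{-s}|q|\leq l_0E^{-r}|q|\leq 1/2$ for $s\in\{r,r+1\}$, and the triangle inequality yields $|\alpha - p/q|\geq 1/(2|q_s||q|)>1/(2k_0Q^{s}|q|)\geq 1/(2k_0Q^{r+1}|q|)$. Minimality of $r$ (together with $2l_0|q|\geq 1$ when $r=1$) gives $E^r\leq 2l_0E|q|$, hence $Q^r=(E^r)^{\kappa}\leq (2l_0E)^{\kappa}|q|^{\kappa}$, and substituting gives exactly $|\alpha-p/q|>1/(2k_0Q(2l_0E)^\kappa|q|^{\kappa+1})$. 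Everything else in your write-up (the nonvanishing argument via $p_rq_{r+1}\neq p_{r+1}q_r$, the triangle inequality, using $|q_s|<k_0Q^s$ with $s\leq r+1$) is correct.
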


Now we discuss how to obtain such good approximations to a root $\alpha$ of $f_t(X)$.
We will construct sequences of polynomials $\pA_r, \pB_r \in \Q(\sqrt{-d})[X]$ such that $|\alpha \pA_r(\xi) - \pB_r(\xi)|$ is very small if $\xi$ is close to $\alpha$. This will almost give us the approximations that we need for the application of Lemma~\ref{lem:approx-irrat}. We will just need to choose $\xi$ appropriately and clear denominators with numbers $M_r$, so that $q_r = M_r \pA_r(\xi)$ and $p_r = M_r \pB_r(\xi)$ are indeed in $\qi$.

Let $\hf$ denote the classic \textit{hypergeometric function}. For positive integers $n,r$ set
\begin{align*}
\chi_{n,r}(X)
	&= \hf \left( -r, -r-\frac{1}{n}; 1-\frac{1}{n}; X \right)
	\in \Q[X], \\
\chi^*_{n,r}(X,Y)
	&= Y^r\chi_{n,r} \left(\frac{X}{Y}\right) 
	\in\Q[X,Y].
\end{align*}
Note that $\chi_{n,r}$ is a polynomial of degree $r$ and $\chi^{*}_{n,r}$ is its homogenization. 

The basis for constructing $\pA_r$ and $\pB_r$ is Thue's ``Fundamentaltheorem'' \cite{Thue1910}. We use the version that is stated in \cite[Lemma 2.1]{Heuberger2006}.

\begin{knownlem}[Thue]\label{lem:Thue}
Let $K$ be a field of characteristic $0$, $\pP \in K[X]$ be a square-free polynomial of degree $n\geq 2$ and assume that there is a square-free quadratic polynomial $\pU \in K[X]$ such that
\begin{equation}\label{eq:Thue-main}
	\pU \pP''- (n-1) \pU' \pP' + \frac{n(n-1)}{2} \pU'' \pP
	= 0
\end{equation}
holds, where the prime denotes differentiation with respect to the indeterminate $X$. 
We set $\lambda =\frac{1}{4}disc(\pU),$ where $\disc(\pU) = \pU'^{2}- 2 \pU \pU'' \in K$ is the discriminant of $\pU$.
We define the polynomials contained in $K(\sqrt{\lambda})[X]$ by 
\begin{align*}
\pY 
	&= 2\pU \pP'-n\pU'\pP, \\
\pa
	&= \frac{n^2-1}{6}(\sqrt{\lambda} \pU'+2\lambda),
	& \pc 
	&= \frac{n^2-1}{6}(\sqrt{\lambda}(\pU'X-2\pU)+2\lambda X),\\
\pb &=\frac{n^2-1}{6}(\sqrt{\lambda} \pU'-2\lambda),
	& \pd
	&= \frac{n^2-1}{6}(\sqrt{\lambda}(\pU'X-2\pU)-2\lambda X),\\	
\pu &= \frac{1}{2} \left(\frac{\pY}{2n\sqrt{\lambda}} - \pP \right),
	& \pz
	&= \frac{1}{2} \left(\frac{\pY}{2n\sqrt{\lambda}} + \pP \right).
\end{align*}
Finally, for later Lemmas, we set $\pw = \pz / \pu \in K(\sqrt{\lambda})(X)$.

Then for $r\in\N$, the polynomials $\pA_r, \pB_r$ given by
\begin{align*}
(\sqrt{\lambda})^r \pA_r 
	&= \pa \chi^{*}_{n,r}(\pz,\pu) - \pb \chi^{*}_{n,r}(\pu,\pz),\\
(\sqrt{\lambda})^r \pB_r 
	&= \pc \chi^{*}_{n,r}(\pz,\pu) - \pd \chi^{*}_{n,r}(\pu,\pz)
\end{align*}
are elements of the polynomial ring $K[X]$ over $K$. For every root $\alpha$ of $\pP$, the polynomial
\[
	\pC_r
	= \alpha \pA_r - \pB_r
\]
is divisible by $(X - \alpha)^{2r+1}$.
\end{knownlem}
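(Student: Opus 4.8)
\emph{The approach.} The plan has three parts: (1) use the hypothesis \eqref{eq:Thue-main} to put $\pP$, $\pU$ and the auxiliary polynomials $\pz,\pu,\pw$ into an explicit normal form over $L:=K(\sqrt\lambda)$; (2) feed this normal form into the classical Padé-type identity for the binomial function $(1-z)^{1/n}$ to read off the divisibility of $\pC_r$; (3) descend from $L[X]$ to $K[X]$ by a Galois argument. For Step~1, write $\pU=u_2X^2+u_1X+u_0$; over $L$ factor $\pU=u_2\,\ell_1\ell_2$ with $\ell_i=X-\rho_i$, $\rho_1\neq\rho_2$, and note $\rho_1-\rho_2=2\sqrt\lambda/u_2$ while $\rho_1+\rho_2=-u_1/u_2\in K$. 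A direct substitution shows that $(X-\rho)^n$ solves \eqref{eq:Thue-main} whenever $\pU(\rho)=0$, so $\ell_1^{\,n}$ and $\ell_2^{\,n}$ are two $L$-linearly independent polynomial solutions of the second-order linear ODE \eqref{eq:Thue-main}; hence every solution, in particular $\pP$, lies in their span, $\pP=c_1\ell_1^{\,n}+c_2\ell_2^{\,n}$ with $c_1,c_2\in L$ (both nonzero, since $\pP$ is square-free and $n\geq2$). Substituting into $\pY=2\pU\pP'-n\pU'\pP$ and using $\ell_i'=1$, $\pU'=u_2(\ell_1+\ell_2)$ collapses everything to $\pY=2n\sqrt\lambda\,(c_1\ell_1^{\,n}-c_2\ell_2^{\,n})$, whence
\[
	\pz=c_1\ell_1^{\,n},\qquad \pu=-c_2\ell_2^{\,n},\qquad \pw=\frac{\pz}{\pu}=-\frac{c_1}{c_2}\left(\frac{\ell_1}{\ell_2}\right)^{n}.
\]
Consequently $\pP$ and $\pU$ are coprime (a common root $\rho_i$ would force $c_{3-i}(\rho_1-\rho_2)^n=0$), $\alpha$ is a root of $\pP$ iff $\pw(\alpha)=1$, and for fixed $n$-th roots $\pz^{1/n}=c_1^{1/n}\ell_1$, $\pu^{1/n}=(-c_2)^{1/n}\ell_2$ are linear forms whose difference $\pz^{1/n}-\pu^{1/n}$ is a linear form vanishing at exactly one root of $\pP$.

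\emph{Step 2 (the hypergeometric identity).} This is the analytic core, going back to Thue and Siegel. One starts from the classical fact that $\chi_{n,r}$, together with the companion polynomial coming from the second solution of Gauss's hypergeometric equation, forms the numerator/denominator data of a rational approximation to $(1-z)^{1/n}$ whose remainder is a nonzero constant $\gamma_{n,r}$ times $z^{2r+1}$ times an explicit $\hf$-series, and in particular vanishes to order exactly $2r+1$ at $z=0$. Homogenising ($\chi^*_{n,r}(X,Y)=Y^r\chi_{n,r}(X/Y)$) and substituting the normal form of Step~1 with $z=1-\pw=-\pP/\pu$ — so that $z$ is a uniformiser at every root $\alpha$ of $\pP$ and $(1-z)^{1/n}$ becomes a branch $\theta$ of $\pw^{1/n}=\pz^{1/n}/\pu^{1/n}$, a Möbius function of $X$ — turns the identity into a relation over $\overline K(X)$ among $\chi^*_{n,r}(\pz,\pu)$, $\chi^*_{n,r}(\pu,\pz)$, $\theta$, and a remainder proportional to $(\pw-1)^{2r+1}=(\pP/\pu)^{2r+1}$. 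There are $n$ branches $\theta=\eta\,\ell_1/\ell_2$ with $\eta^n=-c_1/c_2$, one adapted (i.e.\ $\theta(\alpha)=1$) to each root $\alpha$ of $\pP$. The forms $\pa,\pb,\pc,\pd$ — all linear in $X$ (for $\pc,\pd$ because $\pU'X-2\pU=-u_1X-2u_0$) — are precisely the coefficients of the Möbius substitution $X=(\eta\rho_1-\rho_2\theta)/(\eta-\theta)$ inverse to $\theta=\eta\,\ell_1/\ell_2$; combining $\chi^*_{n,r}(\pz,\pu)$ and $\chi^*_{n,r}(\pu,\pz)$ against $\pa,\pb$ (resp.\ $\pc,\pd$) to form $\pA_r$ (resp.\ $\pB_r$) converts the good approximation of $\theta$ near $\alpha$ into the statement that, after tracking all the constants, the Taylor expansion of $\alpha\pA_r(X)-\pB_r(X)$ in powers of $(X-\alpha)$ has all coefficients in degrees $0,1,\dots,2r$ equal to zero. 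Since $\pC_r=\alpha\pA_r-\pB_r$ is a polynomial over $\overline K$, this means $(X-\alpha)^{2r+1}\mid\pC_r$; running the same substitution with each of the $n$ adapted branches covers every root of $\pP$.

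\emph{Step 3 (descent to $K[X]$).} That $\pA_r,\pB_r$ are polynomials is immediate: they are built from the polynomials $\pa,\pc,\chi^*_{n,r}(\pz,\pu),\chi^*_{n,r}(\pu,\pz)$ and divided by the nonzero scalar $(\sqrt\lambda)^r$. A priori their coefficients lie in $L$; if $\sqrt\lambda\notin K$, let $\sigma$ generate $\mathrm{Gal}(L/K)$. From $\rho_1-\rho_2=2\sqrt\lambda/u_2$ and $\rho_1+\rho_2\in K$ we get $\sigma\colon\sqrt\lambda\mapsto-\sqrt\lambda$ and $\rho_1\leftrightarrow\rho_2$, hence $\ell_1\leftrightarrow\ell_2$; comparing $\sigma(\pP)=\pP$ forces $c_1\leftrightarrow c_2$, so $\sigma\colon\pz\mapsto-\pu$, $\pu\mapsto-\pz$, and therefore $\chi^*_{n,r}(\pz,\pu)\mapsto(-1)^r\chi^*_{n,r}(\pu,\pz)$ by homogeneity. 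Straight from their definitions $\sigma\colon\pa\leftrightarrow-\pb$ and $\pc\leftrightarrow-\pd$, while $(\sqrt\lambda)^r\mapsto(-1)^r(\sqrt\lambda)^r$. Feeding this into $(\sqrt\lambda)^r\pA_r=\pa\,\chi^*_{n,r}(\pz,\pu)-\pb\,\chi^*_{n,r}(\pu,\pz)$ and the analogous identity for $\pB_r$ gives $\sigma(\pA_r)=\pA_r$ and $\sigma(\pB_r)=\pB_r$, so $\pA_r,\pB_r\in K[X]$.

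\emph{Main obstacle.} Steps~1 and~3 are bounded explicit computations. The real work is Step~2: pinning down the binomial Padé identity with the correct companion polynomial, parameters and constant $\gamma_{n,r}$, and then verifying that after the substitution of Step~1 the remainder is divisible by $(\pw-1)^{2r+1}$ in the right way, so that the coefficient multiplying $\pA_r$ in $\pC_r$ comes out to be the root $\alpha$ itself rather than the branch value $\theta(\alpha)=1$. This is where the Thue–Siegel phenomenon hides: a naive parameter count shows that, for $n\geq4$, a single pair $(\pA_r,\pB_r)$ with $\deg\pA_r,\deg\pB_r\leq nr+1$ satisfying $(X-\alpha)^{2r+1}\mid\alpha\pA_r-\pB_r$ for \emph{all} $n$ roots is overdetermined; it exists only because the $n$ roots of $\pP$ form a single orbit under a cyclic group of order $n$ acting by Möbius transformations, and the hypergeometric polynomials $\chi_{n,r}$ are the explicit witness to this rigidity. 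A secondary care point is the degree bookkeeping establishing $\deg\pA_r,\deg\pB_r\leq nr+1$ (using that $\chi^*_{n,r}(\pz,\pu)$ has degree $\leq nr$), which is what makes divisibility by $(X-\alpha)^{2r+1}$ consistent in the first place.
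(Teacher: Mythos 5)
The paper does not prove this lemma: it is imported verbatim as a known result (Heuberger's Lemma~2.1, a reformulation of Thue's \emph{Fundamentaltheorem}), so there is no in-paper argument to compare against. Judged on its own terms, your outline is structurally sound. Step~1 is correct: $(X-\rho)^n$ does solve \eqref{eq:Thue-main} whenever $\pU(\rho)=0$ (the left-hand side collapses to $n(n-1)(X-\rho)^{n-2}\pU(\rho)$ after Taylor-expanding the bracket), so by second-order linearity $\pP=c_1\ell_1^n+c_2\ell_2^n$ with $c_1,c_2\neq 0$, and substituting into $\pY=2\pU\pP'-n\pU'\pP$ indeed yields $\pz=c_1\ell_1^n$, $\pu=-c_2\ell_2^n$, $\pz-\pu=\pP$. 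Step~3 (Galois descent) is also correct and clean: the checks $\sigma(\pz)=-\pu$, $\sigma(\pu)=-\pz$, $\sigma(\pa)=-\pb$, $\sigma(\pc)=-\pd$, $\sigma\bigl((\sqrt\lambda)^r\bigr)=(-1)^r(\sqrt\lambda)^r$ all go through, and homogeneity of $\chi^*_{n,r}$ supplies the needed $(-1)^r$.

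The genuine gap is where you yourself flagged it: Step~2 is a narrative, not a derivation, and it carries the entire content of the theorem. To close it you would need to (i) write down the specific binomial Padé identity whose numerator/denominator pair is $\chi_{n,r}$ and its companion polynomial, with the remainder displayed explicitly as $\gamma_{n,r}\,z^{2r+1}$ times a hypergeometric series in $z$; (ii) verify that, after the substitution $z=1-\pw=-\pP/\pu$ and clearing $\pu^r$ via the homogenisation, the order-$(2r+1)$ vanishing of the power-series remainder in $z$ becomes genuine polynomial divisibility of $(\sqrt\lambda)^r\pC_r$ by $(X-\alpha)^{2r+1}$ (your observation $\pu(\alpha)\neq 0$ is necessary here, but a bookkeeping argument is still needed to rule out cancellation against the denominator powers); and (iii) check that the linear forms $\pa,\pb,\pc,\pd$ with their precise constant $\tfrac{n^2-1}{6}$ and $\sqrt\lambda$-factors convert the approximation of the branch $\theta=\pw^{1/n}$ near $\alpha$ into an approximation of $\alpha$ itself rather than of $\theta(\alpha)=1$. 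Until those three computations are carried out, Step~2 describes the theorem rather than proves it.
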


When constructing the approximations of Lemma~\ref{lem:approx-irrat} we will have to make sure that $p_rq_{r+1}\neq p_{r+1}q_r$ for all $r$. We will use the following lemma \cite[Lemma 2.2]{Heuberger2006}, which is a special case of \cite[Lemma 2.7]{ChenVoutier1997}.

\begin{knownlem}\label{lem:distinctApprox}
Let $\pA_r$, $\pB_r$, $\pP$ and $\pU$ be defined as in Lemma~\ref{lem:Thue}. If $\pU(\xi)\pP(\xi)\neq 0$ for a given $\xi\in\mathbb{C}$, then for all positive integers $r$, we have  
\[
	\pA_{r+1}(\xi)\pB_{r}(\xi)\neq \pA_{r}(\xi)\pB_{r+1}(\xi).
\]
\end{knownlem}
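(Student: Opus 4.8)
The assertion is the non-vanishing of a Wronskian-type cross product, so the plan is to exhibit
\[
	\Delta_r(X) := \pA_{r+1}(X)\pB_r(X) - \pA_r(X)\pB_{r+1}(X)
\]
as a non-zero polynomial in $K[X]$ all of whose zeros are zeros of $\pP(X)\pU(X)$; since the hypothesis reads exactly $\pP(\xi)\pU(\xi)\neq 0$, evaluating $\Delta_r$ at $\xi$ then gives $\Delta_r(\xi)\neq 0$, which is the claim. Note first that, although $\sqrt\lambda$ occurs in $\pa,\pb,\pc,\pd$ and in the definition of $\pA_r,\pB_r$, Lemma~\ref{lem:Thue} guarantees $\pA_r,\pB_r\in K[X]$, hence $\Delta_r\in K[X]$.

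The first step is to show $\pP^{2r+1}\mid\Delta_r$. Fix a root $\alpha$ of $\pP$ and set $\pC_s=\alpha\pA_s-\pB_s$. Substituting $\pB_s=\alpha\pA_s-\pC_s$ into $\Delta_r$, the $\alpha\pA_r\pA_{r+1}$ terms cancel and one is left with the formal identity $\Delta_r=\pA_r\pC_{r+1}-\pA_{r+1}\pC_r$. By Lemma~\ref{lem:Thue}, $(X-\alpha)^{2r+1}\mid\pC_r$ and $(X-\alpha)^{2r+3}\mid\pC_{r+1}$, so $(X-\alpha)^{2r+1}\mid\Delta_r$. As $\pP$ is square-free its roots are distinct, and since $\Delta_r$ does not depend on the chosen root, running this over all roots of $\pP$ yields $\pP^{2r+1}\mid\Delta_r$ in $K[X]$.

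The second step is to control the complementary factor $g_r:=\Delta_r/\pP^{2r+1}$ and to show $\Delta_r\not\equiv 0$. For the non-vanishing I would use the common second-order recurrence in $r$ satisfied by $\pA_r$ and $\pB_r$, a manifestation of the contiguous relations for $\hf$ (they come from the sequences $\chi^*_{n,r}(\pz,\pu)$ and $\chi^*_{n,r}(\pu,\pz)$, and the multipliers $\pa,\pb,\pc,\pd$ in Lemma~\ref{lem:Thue} do not depend on $r$): writing it as $\mathbf{W}_{r+1}=\rho_r\mathbf{W}_r+\sigma_r\mathbf{W}_{r-1}$ with coefficients rational in $X$, one gets the telescoping identity $\Delta_r=-\sigma_r\Delta_{r-1}$, hence $\Delta_r=(-1)^r\bigl(\sigma_r\cdots\sigma_1\bigr)\Delta_0$, and one is reduced to checking that $\Delta_0\neq 0$ (a direct computation with $\pA_0,\pB_0,\pA_1,\pB_1$, using $n\ge 2$ and $\disc(\pU)\neq 0$) and that no $\sigma_s$ vanishes identically. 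For the shape of $g_r$ I would re-express the building blocks through $\pw=\pz/\pu$, so that $\chi^*_{n,r}(\pz,\pu)=\pu^r\chi_{n,r}(\pw)$ and $\chi^*_{n,r}(\pu,\pz)=\pz^r\chi_{n,r}(1/\pw)$, and track how $\pY,\pu,\pz$ and the constant $\pa\pd-\pb\pc$ are assembled from $\pP$, $\pU$ and $\disc(\pU)$; a degree count against the explicit degrees of $\pA_r,\pB_r$ then pins $g_r$ down as a non-zero constant times a power of $\pU$. Altogether $\Delta_r=c_r\,\pP(X)^{2r+1}\pU(X)^{e_r}$ with $c_r\in K(\sqrt\lambda)$, $c_r\neq 0$, and $e_r\ge 0$.

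The final step is immediate: since $c_r\neq 0$ and $\pP(\xi)\pU(\xi)\neq 0$, we get $\Delta_r(\xi)\neq 0$, that is $\pA_{r+1}(\xi)\pB_r(\xi)\neq\pA_r(\xi)\pB_{r+1}(\xi)$, for every positive integer $r$. I expect the second step to be the real work: making the ``$g_r$ is a power of $\pU$'' statement precise via the $\pw$-substitution and leading-coefficient bookkeeping, and verifying that neither $\Delta_0$ nor any recurrence coefficient $\sigma_s$ is identically zero. Everything else is formal, and it is exactly in the second step that the hypotheses (square-freeness of $\pP$ and of $\pU$, their coprimality, and $n\ge 2$) are genuinely needed.
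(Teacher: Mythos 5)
The paper does not prove Lemma~\ref{lem:distinctApprox}: it is stated as a known result, cited from Heuberger (Lemma~2.2 in \cite{Heuberger2006}), itself a special case of Chen--Voutier (Lemma~2.7 in \cite{ChenVoutier1997}). So there is no in-paper proof to compare against; I am assessing your proposal against what those sources actually do.

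Your first step is correct and cleanly executed: the substitution $\pB_s=\alpha\pA_s-\pC_s$ gives $\Delta_r=\pA_r\pC_{r+1}-\pA_{r+1}\pC_r$, and square-freeness of $\pP$ then yields $\pP^{2r+1}\mid\Delta_r$. The gap is in your second step, which you yourself flag as the real work, and neither of the two routes you sketch actually closes it. The telescoping recurrence does not go through as written: putting $G_r:=\chi^{*}_{n,r}(\pz,\pu)$ and $H_r:=\chi^{*}_{n,r}(\pu,\pz)$, any three-term contiguity relation $\chi_{n,r+1}=(a_rX+b_r)\chi_{n,r}+c_r\chi_{n,r-1}$ homogenizes to $G_{r+1}=(a_r\pz+b_r\pu)G_r+c_r\pu^2G_{r-1}$ but to $H_{r+1}=(a_r\pu+b_r\pz)H_r+c_r\pz^2H_{r-1}$, with genuinely different middle and trailing coefficients; since $\pA_r,\pB_r$ are $r$-independent linear combinations of $G_r,H_r$, they do not satisfy a single shared recurrence $\mathbf{W}_{r+1}=\rho_r\mathbf{W}_r+\sigma_r\mathbf{W}_{r-1}$, and the Casoratian identity $\Delta_r=-\sigma_r\Delta_{r-1}$ is not available. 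The degree count alone only gives $\deg(\Delta_r/\pP^{2r+1})\le 2$; it does not make that quotient a multiple of $\pU$, nor rule out $\Delta_r\equiv 0$. The missing ingredient, and what the cited proofs actually use, is the explicit expansion $\pA_{r+1}\pB_r-\pA_r\pB_{r+1}=(\sqrt{\lambda})^{-(2r+1)}(\pa\pd-\pb\pc)\bigl(G_rH_{r+1}-G_{r+1}H_r\bigr)$, in which the $\pa\pc$ and $\pb\pd$ cross terms cancel, together with the direct computation $\pa\pd-\pb\pc=-8\bigl(\tfrac{n^2-1}{6}\bigr)^2\lambda^{3/2}\,\pU$. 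That identity, not a degree count, is what produces the factor $\pU$. With it, coprimality of $\pP$ and $\pU$ and the degree bound force $\Delta_r=c_r\,\pU\,\pP^{2r+1}$ for some constant $c_r$, and the one remaining task is to show $c_r\neq0$, i.e.\ that the hypergeometric Wronskian $G_rH_{r+1}-G_{r+1}H_r$ is not identically zero; Chen and Voutier do this by an explicit closed-form evaluation of that Wronskian. In short, the shape of your conclusion $\Delta_r=c_r\pU\pP^{2r+1}$ matches the literature, but as written your second step does not establish it.
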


The approximations $\pA_r(\xi) / \pB_r(\xi)$ will be very good; however, in general $\pA_r(\xi), \pB_r(\xi)$ won't be integers (even if $\xi$ is), mainly because of the denominators coming from $\chi_{n,r}$. To clear these denominators, we will use the following lemma, which is a result by Lettl et al. \cite[Proposition 2c]{LettlPethoVoutier1999}. In our version of the lemma we have rewritten and slightly weakened the two inequalities when rounding, to make them more naturally applicable in our context.
Note that in the application in the next section, $\xi$ will indeed have the shape $1- 8 x$ as in the lemma.

\begin{knownlem}[Lettl et al.]\label{lem:denominators}
Let $r$ be a positive integer,  
$\Delta_{4,r}$ be the least common multiple of the denominators of the coefficients of $\chi_{4,r}$ and let
$N_{4,r}$ be the greatest common divisor of the numerators of the coefficients of $\chi_{4,r}(1- 8 X)$.
Then $\Delta_{4,r}/N_{4,r}\chi_{4,r}(1 - 8 X)$ is a polynomial with integer coefficients and we have 
\[
	\frac{2^{r+2}\Delta_{4,r}}{N_{4,r}}\cdot\frac{\Gamma(3/4)r!}{\Gamma(r+3/4)}
	<3.32\cdot 1.35^r
	\quad \text{and} \quad 
	\frac{2^{4r+3}\Delta_{4,r}}{N_{4,r}}\cdot\frac{\Gamma(r+5/4)}{\Gamma(1/4)r!}
	<1.6\cdot 10.7^r.
\]
\end{knownlem}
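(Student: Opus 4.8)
The plan is to follow the proof of Proposition~2 in \cite{LettlPethoVoutier1999}, of which the present statement is essentially part (c) with the two inequalities rewritten and slightly weakened; so the real task is to recall that argument and then to check that the rounded constants are valid for every $r\geq 1$. First I would write out the coefficients of $\chi_{4,r}$ explicitly: from $\chi_{4,r}(X)=\hf(-r,-r-\tfrac14;\tfrac34;X)=\sum_{k=0}^{r}(-1)^{k}\binom{r}{k}\frac{(-r-1/4)_{k}}{(3/4)_{k}}X^{k}$ and $4^{k}(3/4)_{k}=3\cdot 7\cdots(4k-1)$ one sees that the denominator of the $k$-th coefficient divides $D_{r}:=3\cdot 7\cdots(4r-1)$, hence $\Delta_{4,r}\mid D_{r}$. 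The crucial refinement is that $D_{r}$ is far too large ($\log D_{r}\asymp r\log r$) and that, after cancellation against the numerators $\binom{r}{k}(-r-1/4)_{k}$, the number $\Delta_{4,r}$ actually has only exponential size; establishing this through prime-by-prime valuations (a Chebyshev-type estimate) is the arithmetic heart of \cite{LettlPethoVoutier1999}.

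Next I would analyse $N_{4,r}$. The substitution $X\mapsto 1-8X$ introduces powers of $2$ and forces a large common factor on the numerators of $\Delta_{4,r}\chi_{4,r}(1-8X)$; computing the $2$-adic valuation, together with the $p$-adic valuations for the relevant primes $p\equiv 1\pmod 4$, yields a lower bound for $N_{4,r}$, and combining it with the bound on $\Delta_{4,r}$ gives an estimate of the shape $\tfrac{2^{ar+b}\Delta_{4,r}}{N_{4,r}}\leq c\,\rho^{r}$ with explicit $c,\rho$ for the two normalisations $(a,b)=(1,2)$ and $(a,b)=(4,3)$ appearing in the statement. That $\tfrac{\Delta_{4,r}}{N_{4,r}}\chi_{4,r}(1-8X)\in\Z[X]$ is then immediate from the definitions of $\Delta_{4,r}$ and $N_{4,r}$. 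This combined prime-by-prime bound is the step I expect to be the main obstacle: a crude estimate such as $\Delta_{4,r}\le D_{r}$ is exponentially too weak, so one genuinely needs the cancellation analysis to push the bases down to the quoted $1.35$ and $10.7$.

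Finally I would dispose of the two gamma-quotients. Writing $\frac{\Gamma(3/4)\,r!}{\Gamma(r+3/4)}=\prod_{j=1}^{r}\frac{4j}{4j-1}$ and $\frac{\Gamma(r+5/4)}{\Gamma(1/4)\,r!}=\frac14\prod_{j=1}^{r}\frac{4j+1}{4j}$, both products are increasing in $r$ and, by Stirling, grow only polynomially (like $r^{1/4}$). Multiplying the exponential bound from the previous paragraph by such a polynomial factor leaves a pure exponential growth rate, so after rounding the leading constants up to $3.32$ resp.\ $1.6$ and the bases up to $1.35$ resp.\ $10.7$ the claimed inequalities hold for all sufficiently large $r$; to make this rigorous for every $r\geq 1$ I would verify the finitely many small values of $r$ by direct computation and, for large $r$, note that the ratio of consecutive values of each side eventually drops below the quoted base, so a short induction closes the gap.
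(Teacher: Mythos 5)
The paper does not prove this lemma at all: it is stated as a \texttt{knownlem} (Theorem/Lemma with a letter label), i.e.\ a result quoted verbatim from the literature. The text immediately preceding the statement says it is \cite[Proposition 2c]{LettlPethoVoutier1999} ``rewritten and slightly weakened,'' and no argument is supplied. So your proposal cannot be compared against a proof in the paper; the only ``proof obligation'' the paper takes on is the implicit (and unspoken) one of checking that the rounded constants $3.32,\ 1.35,\ 1.6,\ 10.7$ are indeed weaker than whatever Lettl--Peth\H{o}--Voutier actually prove.

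Your sketch is a reasonable reconstruction of what the cited proof has to do, and you correctly identify the arithmetic heart: a crude bound $\Delta_{4,r}\le 3\cdot7\cdots(4r-1)$ is superexponential and therefore useless, so one needs prime-by-prime $p$-adic valuation estimates of Chudnovsky type on $\Delta_{4,r}/N_{4,r}$ to bring the growth down to an exponential with a small base. The gamma-quotient identities you write are correct, and both products grow like $r^{1/4}$, so the polynomial contribution is harmless. The one step I would not wave through is the final ``ratio of consecutive values eventually drops below the quoted base, so a short induction closes the gap.'' The quantity $\Delta_{4,r}/N_{4,r}$ is an lcm divided by a gcd and behaves erratically from $r$ to $r+1$ (it can jump by a prime near $4r$), so the consecutive ratio is not monotone and need not be eventually below $1.35$ or $10.7$; the standard way to get a uniform bound for \emph{all} $r$ is an explicit Chebyshev-type estimate on $\theta(x)$ or $\psi(x)$, not an induction on the ratio. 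If you actually wanted to verify the constants you would instead reproduce the explicit exponential bound on $\Delta_{4,r}/N_{4,r}$ from the reference and then maximize the remaining finite correction over $r$, checking a finite initial segment numerically.
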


With the help of the factors $\Delta_{4,r}/N_{4,r}$ from the above lemma, we will construct the actual approximations $p_r/q_r$ to our root $\alpha$. 
In order to apply Lemma~\ref{lem:approx-irrat} we will first have to estimate $|q_r|$. On the one hand, we will use the estimates from the above Lemma~\ref{lem:denominators}, on the other hand, we will need estimates for $\chi_{4,r}$. The next lemma follows from \cite[Lemma 7.3b]{Voutier2010} with $m=1$, $n=4$, $z=1$ and $u = u/z = w$. Note that we cannot use \cite[Lemma 4]{LettlPethoVoutier1999} because we won't have $|w|=1$ since in our case $t$ is not real.

\begin{knownlem}[Voutier]\label{lem:chi}
Let $w$ be a complex number with $|1-w|<1$ and $|1-w^{-1}|<1$ and let $r$ be a positive integer.
Then
\begin{equation*}
	|\chi_{4,r}(w)|
	\leq \frac{\Gamma(3/4) 2^{r+1} r!}{\Gamma(r+3/4)}\cdot (1+|w|)^r.
\end{equation*}
\end{knownlem}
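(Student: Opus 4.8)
The plan is to obtain the estimate as a direct specialization of Voutier's general bound \cite[Lemma~7.3b]{Voutier2010}, so that almost all of the work is a careful translation of notation. In Voutier's setup the relevant terminating hypergeometric function carries parameters $m,n$ (recording that one approximates an $(m/n)$-th power), a degree $r$, and an argument assembled from two quantities $u$ and $z$ through the ratio $u/z$; our $\chi_{4,r}$, normalised with lower parameter $1-1/n=3/4$, is exactly the case $m=1$, $n=4$. First I would set $z=1$ and $u=w$, so that Voutier's argument $u/z$ becomes $w$ and his hypotheses on $|1-u/z|$ and $|1-z/u|$ become precisely $|1-w|<1$ and $|1-w^{-1}|<1$. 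Then I would read off his conclusion: it bounds $|\chi_{4,r}(w)|$ by a product of Gamma-quotients times the geometric factor $(1+|u/z|)^{r}=(1+|w|)^{r}$, and for $n=4$ the Gamma-quotients collapse to $\Gamma(3/4)\,2^{r+1}\,r!/\Gamma(r+3/4)$, which is the claimed inequality.

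If one prefers an argument that does not quote \cite{Voutier2010}, I would proceed in two steps. Writing $\chi_{4,r}(X)=\sum_{k=0}^{r}\frac{(-r)_k(-r-\tfrac14)_k}{(\tfrac34)_k\,k!}X^{k}$, one checks that for $0\le k\le r$ each of $(-r)_k$ and $(-r-\tfrac14)_k$ equals $(-1)^{k}$ times a product of positive reals, so all coefficients of $\chi_{4,r}$ are non-negative; hence $|\chi_{4,r}(w)|\le\chi_{4,r}(|w|)$, and it suffices to bound $\chi_{4,r}(y)$ for the real number $y=|w|$, which the two hypotheses confine to the interval $\tfrac12<y<2$. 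On this interval one uses a standard Euler/Hermite-type integral representation of $\chi_{4,r}$ (this is exactly the range of validity guaranteed by $|1-w|<1$ and $|1-w^{-1}|<1$): the kernel can be estimated, using $|1-yt|\le 1+yt\le 1+y$ on $[0,1]$, to pull the factor $(1+y)^{r}$ out of the integral, and what remains is a Beta integral evaluating to the normalising constant $\Gamma(3/4)\,2^{r+1}\,r!/\Gamma(r+3/4)$. This reproves the lemma, at the cost of pinning down the integral representation and its normalisation precisely.

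I expect the only real obstacle to be the bookkeeping in matching conventions. The two danger points are: confirming that Voutier's argument variable genuinely corresponds to our $w$ rather than to some Möbius image of it (estimates in this circle of ideas are frequently phrased after a normalising substitution such as $X\mapsto 1-2X$ or the $X\mapsto 1-8X$ that already appears in Lemma~\ref{lem:denominators}), and confirming that the constant really carries $2^{r+1}$ and not a different power of $2$, since the exponents of $2$ in such bounds depend delicately on how the denominators and the integral kernel are normalised. Neither point is conceptually difficult, but both require care; beyond them the statement is immediate.
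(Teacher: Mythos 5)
Your first paragraph is precisely the paper's proof: the statement is quoted from \cite[Lemma 7.3b]{Voutier2010} after specializing $m=1$, $n=4$, $z=1$, $u=w$, with no further argument given beyond noting that $|w|=1$ cannot be assumed (which rules out the simpler \cite[Lemma 4]{LettlPethoVoutier1999}). The self-contained alternative in your second paragraph is not attempted in the paper; the positivity observation $|\chi_{4,r}(w)|\le\chi_{4,r}(|w|)$ is sound, but the ``standard Euler integral representation'' needs more care than you suggest, since the numerator parameter $-r-\tfrac14$ is negative and falls outside the usual range of validity.
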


Finally, in order to estimate $|q_r \alpha - p_r|$, which will be a multiple of $\pC_r(\xi)$ from Lemma~\ref{lem:Thue}, we will use the following Lemma. It is a combination of Lemmas 2.3 and 2.4 in  \cite{Heuberger2006}. Note that the former is proven by Chen and Voutier~\cite[Lemma 2.3]{ChenVoutier1997} and that the requirement $\xi \neq 0$ was removed in a recent version of the paper, see the Addendum of \cite{ChenVoutier1997_2018}. 
For roots of complex numbers we will agree to choose the root where the argument has the smallest absolute value (and in case of ambiguity with the positive value), i.e.\ in the lemma below we have $-\pi /n < \arg (w(\xi)^{1/n})\leq \pi/n$.

\begin{knownlem}[Chen and Voutier; Heuberger]\label{lem:R}
Let $n\geq 2$ and $r$ be positive integers and let $\alpha$, $\lambda$, $\pa(X)$, $\pb(X)$, $\pc(X)$, $\pd(X)$, $\pC_r(X)$, $\pu(X)$, $\pz(X)$ and $\pw(X)$ be as in Lemma~\ref{lem:Thue}. Further, let $\xi$ be a complex number such that $|\pw(\xi) -1 | < 1 $. 
Then we can write
\begin{align}
	(\sqrt{\lambda})^r \pC_r(\xi) \label{eq:Cr}
	= &( 
		\alpha (\pa(\xi)\pw(\xi)^{1/n} -  \pb(\xi))
		-  (\pc(\xi) \pw(\xi)^{1/n} - \pd(\xi))
	  )
	  \cdot \chi_{n,r}^*(\pu(\xi), \pz(\xi))\\
	  &- (\alpha \pa(\xi)-\pc(\xi)) \cdot \pu(\xi)^r \cdot R_{n,r}(\pw(\xi)),\nonumber
\end{align}
with the estimate
\[
	|R_{n,r}(\pw(\xi))|
	\leq \frac{\Gamma(r+1+1/n)}{r!4^r\Gamma(1/n)}
		\cdot \frac{|\pw(\xi)-1|^{2r+1}}{(1-|\pw(\xi)-1|)^{r+1-1/n}}.
\]
\end{knownlem}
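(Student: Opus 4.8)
The plan is to reduce the statement to one classical hypergeometric Padé identity, after which the bound on $R_{n,r}$ is a routine majorization of a convergent hypergeometric series.

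\textbf{Algebraic reorganization.} I would start from the explicit formulas of Lemma~\ref{lem:Thue}. Substituting the given expressions for $\pA_r$ and $\pB_r$ into $\pC_r = \alpha\pA_r - \pB_r$ and collecting by the two building blocks $\chi^{*}_{n,r}(\pz,\pu)$ and $\chi^{*}_{n,r}(\pu,\pz)$ gives, after multiplying by $(\sqrt{\lambda})^r$,
\[
	(\sqrt{\lambda})^r \pC_r
	= (\alpha\pa - \pc)\,\chi^{*}_{n,r}(\pz,\pu) - (\alpha\pb - \pd)\,\chi^{*}_{n,r}(\pu,\pz),
\]
which we then evaluate at $\xi$, recalling $\pw(\xi) = \pz(\xi)/\pu(\xi)$. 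At this point everything is expressed through $\chi^{*}_{n,r}(\pz(\xi),\pu(\xi))$, $\chi^{*}_{n,r}(\pu(\xi),\pz(\xi))$ and the algebraic coefficients $\alpha\pa(\xi) - \pc(\xi)$, $\alpha\pb(\xi) - \pd(\xi)$.

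\textbf{The hypergeometric Padé identity.} The heart of the matter is the classical fact (Thue, Siegel; Chen--Voutier \cite[Lemma 2.3]{ChenVoutier1997}) that $\chi_{n,r}(X) = \hf(-r, -r - \tfrac1n; 1 - \tfrac1n; X)$ is, up to a Gamma-factor normalization, the degree-$r$ Padé denominator of the binomial branch $(1 - X)^{1/n}$, together with a companion numerator of the same hypergeometric shape. Reformulated in the variable $\pw$, this yields the companion identity
\[
	\chi^{*}_{n,r}(\pz,\pu)
	= \pw^{1/n}\,\chi^{*}_{n,r}(\pu,\pz) - \pu^r\,R_{n,r}(\pw),
	\qquad
	R_{n,r}(\pw) = \pw^{r + 1/n}\chi_{n,r}(1/\pw) - \chi_{n,r}(\pw),
\]
where the point is that $R_{n,r}$ vanishes to order $2r + 1$ at $\pw = 1$ and admits an explicit $\hf$-representation in the argument $\pw - 1$. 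I would prove this identity, with the branch of $\pw^{1/n}$ normalized as stipulated before the lemma, either from the Euler integral representation of $\hf$ or by contiguous relations, checking the degrees and the order of vanishing at $\pw = 1$; the definitions of $\pa, \pb, \pc, \pd, \pu, \pz$ from Thue's Fundamentaltheorem enter precisely here. Substituting the companion identity into the display from the first step and regrouping via $(\alpha\pa - \pc)\pw^{1/n} - (\alpha\pb - \pd) = \alpha(\pa\pw^{1/n} - \pb) - (\pc\pw^{1/n} - \pd)$ gives exactly the claimed formula \eqref{eq:Cr}; the hypothesis $|\pw(\xi) - 1| < 1$ ensures that the $\hf$-series defining $R_{n,r}(\pw(\xi))$ converges.

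\textbf{Estimating the remainder.} It then remains to bound $|R_{n,r}(\pw(\xi))|$. Using the Euler integral one writes $R_{n,r}(\pw)$ as $\tfrac{\Gamma(r+1+1/n)}{\Gamma(1/n)\, r!}\,(\pw - 1)^{2r+1}$ times an integral $\int_0^1 [t(1-t)]^r\,\nu(t)\,\big(1 - (\pw-1)t\big)^{-(r+1-1/n)}\,dt$, where $\nu$ is a fixed integrable weight independent of $r$. Pulling out $|\pw - 1|^{2r+1}$, bounding $\big|1 - (\pw - 1)t\big|^{-(r+1-1/n)} \leq (1 - |\pw - 1|)^{-(r+1-1/n)}$ uniformly on $[0, 1]$ (again using $|\pw - 1| < 1$), and estimating the remaining Beta-type integral $\int_0^1 [t(1-t)]^r\,|\nu(t)|\,dt$, whose size is governed by $\Gamma(r+1)^2/\Gamma(2r+2)$ and hence carries the factor $4^{-r}$ via the Gamma duplication formula, produces exactly the stated bound. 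The routine work is this last estimate; the genuine obstacle is the middle step --- pinning down the hypergeometric Padé identity with the correct normalizing constants and branch, and matching it against the algebraic quantities $\pa, \pb, \pc, \pd, \pu, \pz, \pw$ from Thue's construction. That bookkeeping is exactly why it is cleanest here to cite the result in the form of \cite{ChenVoutier1997} and \cite{Heuberger2006} rather than to reprove it in full.
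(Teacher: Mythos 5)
The paper does not prove this lemma: it is stated as a \emph{knownlem} and cited directly from Heuberger \cite[Lemmas 2.3 and 2.4]{Heuberger2006}, which in turn rely on Chen and Voutier \cite[Lemma 2.3]{ChenVoutier1997}. So there is no in-paper proof to compare against; the relevant question is whether your sketch faithfully outlines the cited argument, and it does. Your first step is a verifiable computation: from $(\sqrt{\lambda})^r\pA_r = \pa\chi^*_{n,r}(\pz,\pu)-\pb\chi^*_{n,r}(\pu,\pz)$ and the analogous formula for $\pB_r$, one gets $(\sqrt{\lambda})^r\pC_r = (\alpha\pa-\pc)\chi^*_{n,r}(\pz,\pu) - (\alpha\pb-\pd)\chi^*_{n,r}(\pu,\pz)$, and after substituting the companion identity and regrouping via $(\alpha\pa-\pc)\pw^{1/n}-(\alpha\pb-\pd)=\alpha(\pa\pw^{1/n}-\pb)-(\pc\pw^{1/n}-\pd)$ one lands exactly on \eqref{eq:Cr}. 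The final step is also on target: the Euler-type integral for $R_{n,r}$, the uniform bound $|1-(\pw-1)t|^{-(r+1-1/n)}\le(1-|\pw-1|)^{-(r+1-1/n)}$ for $t\in[0,1]$, and the Beta evaluation $\int_0^1 t^r(1-t)^r\,dt=(r!)^2/(2r+1)!<4^{-r}$ together yield precisely the stated constant $\Gamma(r+1+1/n)/(r!\,4^r\,\Gamma(1/n))$.

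One small caveat worth flagging in your write-up: the ``companion identity'' as you have stated it, namely $\chi^*_{n,r}(\pz,\pu)=\pw^{1/n}\chi^*_{n,r}(\pu,\pz)-\pu^r R_{n,r}(\pw)$ with $R_{n,r}(\pw)=\pw^{r+1/n}\chi_{n,r}(1/\pw)-\chi_{n,r}(\pw)$, is a tautology — it is just the definition of $R_{n,r}$ rearranged. The genuine mathematical content, which you do acknowledge but should make the headline, is that this $R_{n,r}$ coincides with the classical Padé remainder, vanishes to order $2r+1$ at $\pw=1$, and admits the Euler integral representation from which the estimate follows. Also, the weight $\nu(t)$ you introduce is unnecessary: in the standard integral the kernel is simply $[t(1-t)]^r$ against $(1-(\pw-1)t)^{-(r+1-1/n)}$, so one gets the pure Beta integral without an extra factor. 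Finally, the branch normalization $-\pi/n<\arg\pw(\xi)^{1/n}\le\pi/n$ fixed in the paper just before the lemma is exactly what is needed to ensure $\pw^{1/n}$ in the companion identity picks out $\alpha$ rather than a conjugate root; you mention this, and it is indeed the place where the Thue-theorem algebra and the hypergeometric analysis are glued together.
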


\section{Proof of Proposition \ref{prop:approx} (irrationality measure)}\label{sec:proofOfProp}

In this section, we prove Proposition~\ref{prop:approx}, i.e.\ we find an effective measure of irrationality for the roots $\alpha=\alpha^{(0)}$ and $\alpha^{(3)}$ of the polynomial $f_t(X)= X^4 - tX^3 - 6X^2 + tX + 1$, with an imaginary quadratic integer $t$ with $|t|\geq 100$.

We start by determining the quantities defined in Lemma~\ref{lem:Thue}. 
First, put
\begin{align*}
\pP(X) 
= f_t(X)
= X^4 - tX^3 - 6X^2 + tX + 1.
\end{align*}
Then one can check that there indeed exists a square-free quadratic polynomial $\pU$ that satisfies the differential equation~\eqref{eq:Thue-main}, namely the polynomial
\[
	\pU(X)=X^{2}+1.
\]
We have $\disc(\pU)=-4$ and we set $\lambda=-1$, as well as
\begin{align*}
\pY(X) 
	&= 2tX^4 + 32X^3 - 12tX^2 - 32X + 2t,\\
\pa(X)
	&= 5iX - 5,
	& \pc(X)
	&= - 5X - 5i, \\
\pb(X)
	&= 5iX + 5, 
	& \pd(X) 
	&= 5X -5i,\\
\pu(X)
	&= - \frac{it + 4}{8}(X+i)^4,
	& \pz(X)
	&= \frac{-it + 4}{8}(X-i)^4,\\
\pw(X)
	&= \frac{it-4}{it+4} \cdot \frac{(X-i)^4}{(X+i)^4}.
\end{align*}
For $\pA_r$ and $\pB_r$ we get the formulas 
\begin{align*}
\pA_r 
	&= (-i)^r (\pa \chi^{*}_{n,r}(\pz,\pu) - \pb \chi^{*}_{n,r}(\pu,\pz))
	= (-i)^r (\pa \pu^r \chi_{n,r}(\pw) - \pb \pz^r \chi_{n,r}(\pw^{-1})),\\
\pB_r 
	&= (-i)^r (\pc \chi^{*}_{n,r}(\pz,\pu) - \pd \chi^{*}_{n,r}(\pu,\pz))
	= (-i)^r (\pc \pu^r \chi_{n,r}(\pw) - \pd \pz^r \chi_{n,r}(\pw^{-1})).	
\end{align*}
Lemma~\ref{lem:Thue} then implies that $\pC_r = \alpha A_r - B_r$ is divisible by $(X-\alpha)^{2r+1}$, i.e.\ we can expect $|\alpha A_r - B_r|$ to be very small if evaluated at a $\xi$ close to the root $\alpha$. Thus, if we want to approximate $\alpha=\alpha^{(0)}$, we have to choose a $\xi$ close to $\alpha^{(0)}$. In view of Lemma~\ref{lem:roots-approx}, $\xi=0$ will be a good choice for $\xi$. Similarly, if we want to approximate $\alpha^{(3)}$ we will choose $\xi=1$. 

\subsection{Irrationality measure for $\alpha^{(0)}$}

Let us first focus on $\alpha^{(0)}\approx 0$.
We compute:
\begin{align*}	
\pa(0) &= -5, \quad
	& \pc(0) &= -5i,\\
\pb(0) &= 5, \quad
	& \pd(0) &= -5i, \\
\pu(0) &= - \frac{it + 4}{8} =: -\frac{u}{8}, \quad
	& \pz(0) &= \frac{-it + 4}{8} =: - \frac{z}{8} , \\
\pw(0) &= \frac{it - 4}{it + 4} = 1 - \frac{8}{u} :=w.
\end{align*}
where we defined 
\[	
	u=it+4,
	\quad 
	z=it-4 \quad \text{and} \quad
	w= \frac{it-4}{it+4}.
\]
Noting that $\pw(0)^{-1} = w^{-1} = \frac{it+4}{it-4} = 1 + \frac{8}{z}$ we obtain from the definitions
\begin{align*}
\pA_r(0)
	&= (-i)^r \left(
		-5 \left(-\frac{u}{8}\right)^r \chi_{4,r}(1-\frac{8}{u})
		- 5 \left(-\frac{z}{8}\right)^r \chi_{4,r}(1+\frac{8}{z})
	\right) \\
	&= -5 i^r 8^{-r} \left(
		u^r \chi_{4,r}(1-\frac{8}{u}) + z^r \chi_{4,r}(1+\frac{8}{z})
		\right),\\
\pB_r(0)
	&= (-i)^r \left( 
		- 5i \left(-\frac{u}{8}\right)^r \chi_{4,r}(1-\frac{8}{u})
		+ 5i \left(-\frac{z}{8}\right)^r \chi_{4,r}(1+\frac{8}{z})
	\right) \\
		&= -5 i^{r+1} 8^{-r} \left(
		u^r \chi_{4,r}(1-\frac{8}{u}) - z^r \chi_{4,r}(1+\frac{8}{z})
		\right).
\end{align*}

In order to obtain algebraic integers, we clear the denominators of $\pA_r(0)$ and $\pB_r(0)$ with the notation from Lemma~\ref{lem:denominators}: We set 
\[
	M_r(0) = \frac{8^r}{5} \cdot \frac{\Delta_{4,r}}{N_{4,r}} 
	\quad \text{and} \quad
	p_r(0) = M_r(0) \pB(0), \quad 
	q_r(0) = M_r(0) \pA(0).
\]
Then we have
\begin{align*}
	p_r(0)
	&= -i^{r+1} \frac{\Delta_{4,r}}{N_{4,r}} \left(
		u^r \chi_{4,r}(1-8u^{-1}) - z^r \chi_{4,r}(1+8z^{-1})
		\right), \\
	q_r(0)
	&=	
	-i^r \frac{\Delta_{4,r}}{N_{4,r}} \left(
		u^r \chi_{4,r}(1-8u^{-1}) + z^r \chi_{4,r}(1+8z^{-1})
		\right)
	.
\end{align*}
We check that $p_r(0), q_r(0)$ are in $\qi$ for any $r$. First, recall that by Lemma~\ref{lem:Thue} the polynomials $\pA_r(X), \pB_r(X)$ are polynomials with coefficients in $\Q(\sqrt{-d})$ and therefore $\pA_r(0), \pB_r(0) \in \Q(\sqrt{-d})$. Since $M_r(0) \in \Q$, we clearly have $p_r(0), q_r(0) \in \Q(\sqrt{-d})$. Now we check that $p_r(0), q_r(0)$ are algebraic integers. The factors $-i^r$ and $-i^{r+1}$ are algebraic integers. By the definition of $\Delta_{4,r}$ and $N_{4,r}$ in Lemma~\ref{lem:denominators} we have that $\Delta_{4,r}/N_{4,r}\cdot \chi_{4,r}(1 - 8 X)$ is a polynomial with integer coefficients of degree $r$. Therefore, since $u,z$ are algebraic integers,
we see that $u^r \cdot \Delta_{4,r}/N_{4,r}\cdot \chi_{4,r}(1 - 8 u^{-1})$ and $z^r \cdot \Delta_{4,r}/N_{4,r}\cdot \chi_{4,r}(1 - 8(-z^{-1}))$ are algebraic integers as well. Thus $p_r(0)$ and $q_r(0)$ are algebraic integers and since they are in $\Q(\sqrt{-d})$, we have indeed $p_r(0),q_r(0)\in \qi$.

In order to apply Lemma~\ref{lem:approx-irrat} we need to estimate $|q_r(0)|$ and $|q_r(0) \alpha - p_r(0)|$ from above.
Recall that we have set
\[
	w 
	= \pw(0)
	= 1 - 8 u^{-1}
	= (1 + 8z^{-1})^{-1}
	= \frac{it - 4}{it + 4}.
\]
To check that these equalities  hold, see the computation of $\pw(0)$ and the computations below that. 
Now we can write
\begin{align*}
	p_r(0)
	&= -i^{r+1} \frac{\Delta_{4,r}}{N_{4,r}} \left(
		u^r \chi_{4,r}(w) - z^r \chi_{4,r}(w^{-1})
		\right), \\
	q_r(0)
	&= -i^r \frac{\Delta_{4,r}}{N_{4,r}} \left(
		u^r \chi_{4,r}(w) + z^r \chi_{4,r}(w^{-1})
		\right).
\end{align*}
In order to apply Lemma~\ref{lem:chi}, we check that $|1-w|<1$ and $|1-w^{-1}|<1$:
\begin{align}\label{eq:1-w}
	|1-w|
	= \left| \frac{- 8}{it - 4} \right|
	\leq \frac{8}{|t| - 4}
	<1
\end{align}
and analogously we obtain $|1-w^{-1}|<1$.
Now we use Lemma~\ref{lem:chi}, the fact that $|u|=|it+4|\leq |t|+4$ and $|z|=|it-4|\leq |t|+4$: 
\begin{align*}
	|q_r(0)|
	&\leq \frac{\Delta_{4,r}}{N_{4,r}} \left(
		|u|^r |\chi_{4,r}(w)| + |z|^r |\chi_{4,r}(w^{-1})|
		\right) \\
	&\leq \frac{\Delta_{4,r}}{N_{4,r}} (|t|+4)^r \left(
		\frac{\Gamma(3/4) 2^{r+1} r!}{\Gamma(r+3/4)}\cdot (1+|w|)^r	
		+ 	\frac{\Gamma(3/4) 2^{r+1} r!}{\Gamma(r+3/4)}\cdot (1+|w^{-1}|)^r	
		\right)\\
	&= \frac{\Delta_{4,r}}{N_{4,r}} \frac{\Gamma(3/4) 2^{r+1} r!}{\Gamma(r+3/4)}(|t|+4)^r \left(
		(1+|w|)^r + (1+|w^{-1}|)^r	
		\right).
\end{align*}
Next we use the estimates $|w|,|w^{-1}| \leq 1 + 8/(|t|-4) < 1.09$ 
and $|t|+4 \leq 1.04|t|$ for $|t|\geq 100$, as well as Lemma~\ref{lem:denominators}, obtaining

\begin{align*}
	|q_r(0)|
	&\leq \frac{2^{r+1} \Delta_{4,r}}{N_{4,r}} \frac{\Gamma(3/4) r!}{\Gamma(r+3/4)} (|t|+4)^r \cdot 2 \cdot 2.09^r\\
		&\leq \frac{2^{r+2} \Delta_{4,r}}{N_{4,r}} \frac{\Gamma(3/4) r!}{\Gamma(r+3/4)} (1.04 |t|)^r \cdot 2.09^r\\
	&\leq 3.32\cdot 1.35^r \cdot (1.04 |t|)^r \cdot 2.09^r \\
	&< 3.32 \cdot (2.94|t|)^r.
\end{align*}
Thus we can set $k_0=3.32$ and $Q=2.94|t|$ in Lemma~\ref{lem:approx-irrat}. Next, we need to find an upper bound for the estimation error
\[
	|\alpha q_r(0) - p_r(0)|
	= |\alpha M_r(0)\pA_r(0)  - M_r(0) \pB_r(0)|
	= M_r(0) |\pC_r(0)|.
\] 
We want to apply Lemma~\ref{lem:R} and we first show that the coefficient $\alpha (\pa(0)\pw(0)^{1/4} -  \pb(0))
		-  (\pc(0) \pw(0)^{1/4} - \pd(0))$
of $\chi_{4,r}^*(\pu(0),\pz(0))$ in \eqref{eq:Cr} vanishes. To that aim we verify that the expression
\begin{align}
	\frac{\pc(0) \pw(0)^{1/4} - \pd(0)}{\pa(0) \pw(0)^{1/4} - \pb(0)} \label{eq:expression-root0}
	= i \cdot \frac{\left(\frac{it-4}{it+4}\right)^{1/4} - 1}{ \left(\frac{it-4}{it+4}\right)^{1/4} + 1}
\end{align}
is a root of $f_t(X)$. This can be done with a straightforward computation. Moreover, since $(it - 4)/(it+4) \approx 1$ for large $|t|$,
the absolute value of the expression in \eqref{eq:expression-root0} is very small for $|t|\geq 100$. Therefore, the above expression must be exactly $\alpha=\alpha^{(0)}$ and the first summand in \eqref{eq:Cr} vanishes. Thus we obtain from Lemma~\ref{lem:R} that
\begin{align*}
	|\pC_r(0)| 
		&\leq | \alpha \pa(0) - \pc(0)| \cdot |\pu(0)|^r 
			\cdot \frac{\Gamma(r+1+1/4)}{r!4^r\Gamma(1/4)}
			\cdot \frac{|\pw(0)-1|^{2r+1}}{(1-|\pw(0)-1|)^{r+1-1/4}} \\
		&= |-5 \alpha +5i| \cdot \left(\frac{|it+4|}{8}\right)^r
			\cdot \frac{\Gamma(r+5/4)}{r! 4^r \Gamma(1/4)}
			\cdot \frac{|w-1|^{2r+1}}{(1-|w-1|)^{r+3/4}}.
\end{align*}
Recall from \eqref{eq:1-w} that $|1-w|\leq 8/(|t|-4)$, which moreover implies 
\[
	1-|w-1|
	\geq 1- \frac{8}{|t|-4}
	= \frac{|t|-12}{|t|-4}.
\]
We continue estimating $|\pC_r(0)|$:
\begin{align*}
	|\pC_r(0)| 
		&\leq 5(|\alpha|+1) 2^{-5r} (|t|+4)^r \frac{\Gamma(r+5/4)}{r! \Gamma(1/4)} 
			\cdot \left(\frac{8}{|t|-4}\right)^{2r+1} 
			\cdot \left( \frac{|t|-4}{|t|-12} \right)^{r+3/4}\\
		&= 5(|\alpha|+1) 2^{r+3} (|t| +4)^r \frac{\Gamma(r+5/4)}{r! \Gamma(1/4)}  (|t|-4)^{-r-1/4} (|t|-12)^{-r-3/4}.
\end{align*}
Now note that for $|t|\geq 100$ we have
\begin{align*}
	|\alpha|\leq 1/|t| + 5.01/|t|^3 < 0.02, \quad
	|t|+4 \leq 1.04 |t|, \quad
	|t|-4 \geq 0.96 |t|, \quad
	|t|-12 \geq 0.88 |t|.
\end{align*}
Thus we obtain
\begin{align*}
	|\pC_r(0)| 
		&< 5 \cdot 1.02 \cdot 2^{r+3} \frac{\Gamma(r+5/4)}{r! \Gamma(1/4)} 
			\cdot |t|^{-r-1} 1.04^r \cdot 0.96^{-r-1/4} \cdot 0.88^{-r-3/4} \\
		&< 5 \cdot 2^{r+3}\cdot \frac{\Gamma(r+5/4)}{r! \Gamma(1/4)} 
			\cdot 1.14 \cdot |t|^{-1} \cdot \left(\frac{1.24}{|t|}\right)^r.
\end{align*}
Using this estimate, the definition of $M_r(0)$ and Lemma~\ref{lem:denominators} we obtain
\begin{align*}
	M_r(0) |\pC_r(0)|
		&< \frac{8^r}{5} \cdot \frac{\Delta_{4,r}}{N_{4,r}} 
			\cdot 5 \cdot 2^{r+3}\cdot \frac{\Gamma(r+5/4)}{r! \Gamma(1/4)} 
			\cdot 1.14 \cdot |t|^{-1}\cdot \left(\frac{1.24}{|t|}\right)^r\\
		&= \frac{2^{4r+3} \Delta_{4,r}}{N_{4,r}} 
			 \cdot \frac{\Gamma(r+5/4)}{\Gamma(1/4)r!} 
			\cdot 1.14 \cdot |t|^{-1}\cdot \left(\frac{1.24}{|t|}\right)^r\\
		&<  1.6 \cdot 10.7^r
			\cdot 1.14 \cdot |t|^{-1}\cdot \left(\frac{1.24}{|t|}\right)^r\\
		&< 1.83 |t|^{-1}\cdot \left(\frac{13.27}{|t|}\right)^{r}.
\end{align*}
Thus we have proven that $|\alpha q_r(0) - p_r(0)| < 1.83 |t|^{-1}\cdot (|t|/13.27)^{-r}$ and we can set $l_0=1.83/|t|$ and $E=|t|/13.27$ in Lemma~\ref{lem:approx-irrat}. Let us sum up what we have achieved so far: Assuming $|t|\geq 100$, for the root $\alpha$ we have found $p_r(0),q_r(0)\in \qi$ for all positive integers $r$, such that $|q_r(0)|<k_0 Q^r$ and $|\alpha q_r(0) - p_r(0)| \leq l_0 E^{-r}$ with
\[
	k_0 = 3.32, \quad
	Q = 2.94|t|, \quad
	l_0 = 1.83/|t|, \quad
	E= |t|/13.27.
\]
Now we only need to check that $p_r(0)q_{r+1}(0) \neq p_{r+1}(0) q_r(0)$ for all $r$. This follows immediately from Lemma~\ref{lem:distinctApprox} as $\pU(0)=\pP(0)=1\neq 0$, and the fact that $M_r(0)\neq 0$ for all $r$. 
Thus we can apply Lemma~\ref{lem:approx-irrat} with
\begin{align*}
	\kappa	
	= \frac{\log Q}{\log E}
	&= \frac{\log |t| + \log 2.94}{\log |t| -\log 13.27}
	\leq \frac{\log |t| + 1.08}{\log |t| - 2.59}.
\end{align*}
Note that $\kappa<3$ for $|t|\geq 84$.
Moreover, we have
\begin{align*}
c 
	= 2k_0Q(2l_0E)^\kappa
	&= 2\cdot 3.32 \cdot 2.94|t| (2\cdot 1.83/|t| \cdot |t|/13.27)^\kappa\\
	&< 19.53 |t| \cdot 0.28^\kappa
	< 19.53 |t| \cdot 0.28
	< 5.47|t|.
\end{align*}
Finally, note that 
\[
	1/(2l_0)= 1/(2\cdot 1.83/|t|)
	< 0.28 |t|.
\]
Thus Lemma~\ref{lem:approx-irrat} yields
\[
	\left| \alpha - \frac{p}{q} \right|
	> \frac{1}{5.47 |t| \cdot |q|^{\kappa +1 }}
	\quad \text{with} \quad 
	\kappa = \frac{\log |t| + 1.08}{\log |t| - 2.59}
\]
for all $|q|\geq 0.28 |t|$.

We have proven Proposition~\ref{prop:approx} for $j=0$. Note that the worse constant $15.48$ instead of $5.47$ will come from the type 3 case.

\subsection{Irrationality measure for $\alpha^{(3)}$}

Now we quickly repeat all computations for $\alpha^{(3)}\approx 1$.
\begin{align*}	
\pa(1) &= 5i - 5, \quad
	& \pc(1) &= -5i - 5,\\
\pb(1) &= 5i + 5, \quad
	& \pd(1) &= -5i + 5, \\
\pu(1) &= \frac{it + 4}{2} = \frac{u}{2}, \quad
	& \pz(1) &= \frac{it - 4}{2} = \frac{z}{2} , \\
\pw(1) &= \frac{it - 4}{it + 4} = 1 - \frac{8}{u} =w,
\end{align*}
where as before 
\[	
	u=it+4,
	\quad 
	z=it-4 \quad \text{and} \quad
	w= \frac{it-4}{it+4}
		= 1 - \frac{8}{u}
		= \left(1 + \frac{8}{z}\right)^{-1}.
\]
We continue with the computations as above:
\begin{align*}
\pA_r(1)
	&= (-i)^r \left( 
		(5i - 5) \left(\frac{u}{2}\right)^r \chi_{n,r}(1 - \frac{8}{u}) 
		- (5i + 5) \left(\frac{z}{2}\right)^r \chi_{n,r}(1 + \frac{8}{z})	
		\right)\\
	&= 5 (i-1) (-i)^r 2^{-r}  \left( 
		u^r \chi_{n,r}(1 - \frac{8}{u}) 
		+ i \cdot z^r \chi_{n,r}(1 + \frac{8}{z})	
		\right),\\
\pB_r(1)
	&= (-i)^r \left(
		(-5i -5) \left( \frac{u}{2} \right)^r \chi_{n,r}(1 - \frac{8}{u}) 
		- (-5i+5) \left( \frac{z}{2} \right)^r \chi_{n,r}(1 + \frac{8}{z})
		\right)\\
	&= -5 (i+1) (-i)^r 2^{-r} \left(
		 u^r \chi_{n,r}(1 - \frac{8}{u}) 
		- i \cdot z^r \chi_{n,r}(1 + \frac{8}{z})
		\right).	
\end{align*}
We set 
\[
	M_r(1) = \frac{2^r}{5} \cdot \frac{\Delta_{4,r}}{N_{4,r}} 
	\quad \text{and} \quad
	p_r(1) = M_r(1) \pB(1), \quad 
	q_r(1) = M_r(1) \pA(1)
\]
and obtain
\begin{align*}
	p_r(1)
	&= - (i+1) (-i)^r \frac{\Delta_{4,r}}{N_{4,r}} \left(
		u^r \chi_{4,r}(1-8u^{-1}) - i \cdot z^r \chi_{4,r}(1+8z^{-1})
		\right), \\
	q_r(1)
	&= (i-1) (-i)^r \frac{\Delta_{4,r}}{N_{4,r}} \left(
		u^r \chi_{4,r}(1-8u^{-1}) + i \cdot z^r \chi_{4,r}(1+8z^{-1})
		\right).
\end{align*}
The numbers $p_r(1), q_r(1)$ are in $\qi$ by the same arguments as for $p_r(0), q_r(0)$ above.

Next, we estimate $|q_r(1)|$. This is completely analogous to the estimate for $|q_r(0)|$ from above, except that we now have the additional factor $(i-1)$ with absolute value $\sqrt{2}$. Thus we end up with
\begin{align*}
	|q_r(0)|
	< \sqrt{2} \cdot 3.32 \cdot (2.94|t|)^r
	< 4.7 \cdot (2.94|t|)^r.
\end{align*}
Next, we find an upper bound for
\[
	|\alpha q_r(1) - p_r(1)|
	= |\alpha M_r(1)\pA_r(1)  - M_r(1) \pB_r(1)|
	= M_r(1) |\pC_r(1)|.
\] 
As above, one can check that
\[
	\frac{\pc(1) \pw(1)^{1/4} - \pd(1)}{\pa(1) \pw(1)^{1/4} - \pb(1)} 
	= \frac{\left(\frac{it-4}{it+4}\right)^{1/4} - i}
		   {-i \left(\frac{it-4}{it+4}\right)^{1/4} + 1}
\]
is a root of $f_t(X)$, which is close to $1$, and therefore must be equal to $\alpha^{(3)}$. Thus the first summand in \eqref{eq:Cr} vanishes and Lemma~\ref{lem:R} yields
\begin{align*}
	|\pC_r(1)| 
		&\leq | \alpha^{(3)} \pa(1) - \pc(1)| \cdot |\pu(1)|^r
			\cdot \frac{\Gamma(r+1+1/4)}{r!4^r\Gamma(1/4)}
			\cdot \frac{|\pw(1)-1|^{2r+1}}{(1-|\pw(1)-1|)^{r+1-1/4}} \\
		&= |5 (i-1) (\alpha^{(3)} - i)| \cdot \left(\frac{|it+4|}{2}\right)^r
			\cdot \frac{\Gamma(r+5/4)}{r! 4^r \Gamma(1/4)}
			\cdot \frac{|w-1|^{2r+1}}{(1-|w-1|)^{r+3/4}}.
\end{align*}
The rest of the estimation is completely analogous to that of $\pC_r(1)$, except we now have the extra factor $|i-1|=\sqrt{2}$. Moreover, instead of the factor from before $|- \alpha + i| \leq |\alpha| + 1 \leq 1.02$, we now have $|\alpha^{(3)} - i| \leq |1-i| + 2.16/|t| \leq \sqrt{2} + 2.16/100 \leq 1.44$. 
Thus we end up with 
\begin{align*}
	M_r(1) |\pC_r(1)|
	&\leq \frac{1.83\cdot \sqrt{2} \cdot 1.44}{1.02} \cdot |t|^{-1}\cdot \left(\frac{13.27}{|t|}\right)^{r}\\
	&\leq 3.66 |t|^{-1}\cdot \left(\frac{13.27}{|t|}\right)^{r}.
\end{align*}
Now we can set
\[
	k_0 = 4.7, \quad
	Q = 2.94|t|, \quad
	l_0 = 3.66/|t|, \quad
	E= |t|/13.27
\]
and we get the same $\kappa$ as before. For the constant $c$ we now have
\begin{align*}
c 
	= 2k_0Q(2l_0E)^\kappa
	&= 2\cdot 4.7 \cdot 2.94|t| (2\cdot 3.66/|t| \cdot |t|/13.27)^\kappa\\
	&\leq 27.64 |t| 0.56^\kappa
	\leq 27.64 |t| 0.56
	\leq 15.48|t|.
\end{align*}
Finally, note that 
\[
	1/(2l_0)= 1/(2\cdot 3.66/|t|)
	\leq 0.14 |t|.
\]
Lemma~\ref{lem:approx-irrat} finally yields
\[
	\left| \alpha - \frac{p}{q} \right|
	> \frac{1}{15.48 |t| \cdot |q|^{\kappa +1 }}
	\quad \text{with} \quad 
	\kappa = \frac{\log |t| + 1.08}{\log |t| - 2.59}
\]
for all $|q|\geq 0.14 |t|$.

We have thus proven Proposition~\ref{prop:approx} for $i=3$. Note that the stronger assumption $|q|\geq 0.28$ came from the case $i=0$.
Overall, Proposition~\ref{prop:approx} is now proven.

\section{Proof of Theorem \ref{thm:main100} (resolution of equation)}\label{sec:proofOfThm}

Since we have already solved $F_t(X,Y) = 0$ in Section~\ref{sec:irred}, in order to finish the proof of Theorem~\ref{thm:main100} we only need to solve the equations of the shape 
\begin{equation}\label{eq:unity-mainproof}
	F_t(X,Y) = \mu.
\end{equation}
Let $t\in \qi$ with $|t|\geq 100$, let $\mu \in \qi^\times$ and let $(x,y)\in \qi^2$ be non-trivial solution to equation~\eqref{eq:unity-mainproof}. In view of Lemma~\ref{lem:smallSols} we may assume $\min\{|x|,|y|\}\geq 3$.
By Lemma~\ref{lem:typewlog} we may assume without loss of generality that $(x,y)$ is either of type~0 or of type~3. 
From Lemma~\ref{lem:lbfory} we get that $|y|\geq 0.44 |t| \geq 0.28|t|$.
Now we can combine Proposition~\ref{prop:approx} and Lemma~\ref{lem:betajsmall}:
\begin{align*}
	\frac{1}{15.48 |t| \cdot |y|^{\kappa +1 }}
	<\left| \alpha - \frac{x}{y} \right| 
	< \frac{8.86}{|t|\cdot |y|^4}.
\end{align*}
This implies
\[
	|y|^{3-\kappa} 
	< 8.86 \cdot 15.48 
	< 137.16
\]
and with $\kappa < 2.83$
\[
	|y| 
	< 137.16^{1/0.17}
	< 3.74 \cdot 10^{12}.
\]
This contradicts the lower bound $|y| > 1.047 \cdot 10^{13}$ from Lemma~\ref{lem:lbfory}. Thus we have proven that there are no non-trivial solutions to equation \eqref{eq:unity-mainproof} for $|t| \geq 100$.

\begin{rem}
The proof of Theorem~\ref{thm:main100} works in principle as long as $\kappa < 3$, we just have to increase the lower bound for $|y|$ by pushing the Padé approximations in the proof of Lemma~\ref{lem:lbfory} further.
Since $\kappa < 3$ for $|t|\geq 84$, one could extend Theorem~\ref{thm:main100} to roughly $|t|\geq 84$ with the method used in this paper.
Moreover, one could slightly improve the result by estimating more carefully in the proof of Proposition~\ref{prop:approx}.
We refrained from this in favor of readability.
\end{rem}

\section{Proof of Corollaries \ref{cor:lin} and \ref{cor:eps}}\label{sec:cors}

Corollaries~\ref{cor:lin} and~\ref{cor:eps} are concerned with the inequalities $|F_t(X,Y)|\leq C |t|$ and $|F_t(X,Y)|\leq |t|^{2-\eps}$ respectively. As mentioned in Section~\ref{sec:results}, the Corollaries follow relatively quickly from Proposition~\ref{prop:approx} and in contrast to Theorem~\ref{thm:main100} the proofs make use of the fact that $\kappa$ can get arbitrarily close to 1.

Before proving the Corollaries, we first generalize Lemma~\ref{lem:betajsmall}, Lemma~ \ref{lem:typewlog} and a partial result from Lemma~\ref{lem:lbfory} to more general inequalities of the shape $|F_t(X,Y)|\leq Q$.

\begin{customlem}{\ref{lem:betajsmall}*}\label{lem:betajsmall-star}
Let $|t|\in \qi$, $|t|\geq 100$ and $Q \in \R^+$. Let $(x,y)\in \qi^2$ with $y\neq 0$ be a solution
to the inequality 
\[
	|F_t(X,Y)|\leq Q.
\]
If $(x,y)$ is of type $j$
(i.e.\ $|\beta^{(j)}|$ is minimal),
then 
\begin{equation*}
	|x - \alpha^{(j)} y|
	=|\beta^{(j)}|
	<\frac{8.86 \cdot Q}{|t|\cdot |y|^3}.
\end{equation*}
\end{customlem}
\begin{proof}
The proof is completely analogous to the proof of Lemma~\ref{lem:betajsmall}.
\end{proof}

\begin{customlem}{\ref{lem:typewlog}*}\label{lem:typewlog-star}
Let $t \in \qi$, $|t|\geq 100$ and $Q \in \R^+$. 
Let $(x,y)\in \qi^2$ be a solution to the inequality
\begin{equation}\label{ineq:Q}
	|F_t(X,Y)|\leq Q,
\end{equation}
with $\min\{|x|,|y|\} \geq \left( \frac{20.14 Q}{|t|} \right)^{1/4}$. Then $(x,y)$ is of type $j$ if and only if $(-y,x)$ is a solution of type $j+2\pmod{4}$.
\end{customlem}
\begin{proof}
First, let us define approximations to the roots of $f_t(X)$:
\[
	\xi^{(0)} = 0, \quad
	\xi^{(1)} = -1, \quad
	\xi^{(2)} = t, \quad
	\xi^{(3)} = 1.
\]
Then by Lemma~\ref{lem:roots-approx} we have that $|\alpha^{(i)} - \xi^{(i)}| < 0.06$ for $i=1,2,3,4$.

Let $(x,y)\in \qi^2$ be any solution to \eqref{ineq:Q} with $xy\neq 0$ of any type $j$. 
Then by Lemma~\ref{lem:betajsmall-star} we have that
\[
	\left|\alpha^{(j)} - \frac{x}{y}\right| 
	< \frac{8.86 \cdot Q}{|t|\cdot |y|^4}.
\]
Now note that the assumption 
\[
	|y|	
	\geq \min\{|x|,|y|\}
	> \left( \frac{20.14 Q}{|t|} \right)^{1/4}
	> \left( \frac{8.86 Q}{0.42 |t|} \right)^{1/4}
\]
was chosen such that\[
	\left|\alpha^{(j)} - \frac{x}{y}\right| 
	< \frac{8.86 \cdot Q}{|t|\cdot |y|^4}
	< 0.44
\]
In particular, this implies 
\begin{equation}\label{ineq:xi}
	\left|\xi^{(j)} - \frac{x}{y}\right|
	 < 0.5.
\end{equation}
Since the distance between any two distinct $\xi^{(i)}$ is at least 0.5, we can say that $(x,y)$ is of type $j$ if and only if it satisfies \eqref{ineq:xi}.

Now consider $(-y,x)$, which is also a solution to \eqref{ineq:Q} therefore of some type $k$, i.e.\
\begin{equation*}
	\left|\xi^{(k)} - \frac{-y}{x}\right|
	 < 0.5.
\end{equation*}
Thus we have a complex number $z = x/y$ which satisfies both $|z - \xi^{(j)}| < 0.5$ and $|-z^{-1} - \xi^{(k)}| < 0.5$. Looking at the set $\{\xi^{(0)} = 0, \xi^{(1)} = -1, \xi^{(2)} = t,  \xi^{(3)} = 1\}$ it is easy to see that this is only possible if either $\{j,k\} = {0,2}$ or $\{j,k\} = {1,3}$.
\end{proof}

\begin{rem}
The argument in the proof of Lemma~\ref{lem:typewlog-star} could also have been used to prove Lemma~\ref{lem:typewlog} without the assumption  $\min\{|x|,|y|\} \geq 3$. However, the lower bound 3 was also helpful for establishing the lower bounds for $|y|$ in Section~\ref{sec:lowerbound}. In particular, we used $|y|\geq 3$ to get a sufficiently large bound of the shape $|y| > c \cdot |t|$ in Step~\ref{step:type3-1} in Section~\ref{sec:lowerbound-type3}. Moreover, it was interesting to see some small solutions in Section~\ref{sec:smallsols}.
\end{rem}

Finally, as a last preparation for the proofs of the Corollaries, we prove the following lemma. It is analogous to a partial result in the proof the of Lemma~\ref{lem:lbfory}, which gave us lower bounds for $|y|$. Indeed, we will later use Lemma~\ref{lem:lbfory-star} to obtain lower bounds for $|y|$.

\begin{customlem}{\ref{lem:lbfory}*}\label{lem:lbfory-star}
Let, $t \in \qi$, $|t|\geq 100$ and $Q \in R^+$. 
Let $(x,y)\in \qi^2$ with $x \neq y\neq 0$ be a solution to the inequality 
\[
	|F_t(X,Y)|\leq Q.
\]
Assume that $(x,y)$ is either of type 0 or 3.
Then we have
\begin{equation*}\label{eq:lby-cors}
	1
	< \frac{2.16}{|t|}|y| + \frac{8.86 Q}{|t|\cdot |y|^3}.
\end{equation*}
\end{customlem}
\begin{proof}
We combine Lemma~\ref{lem:roots-approx} and Lemma~\ref{lem:betajsmall-star} in the same way as in Step~\ref{step:type0-1} in Section~\ref{sec:lowerbound-type0} and in Step~\ref{step:type3-1} in Section~\ref{sec:lowerbound-type3}.
In the type 0 case we obtain
\begin{align*}
	1	
	\leq |x|
	< \frac{1.01}{|t|}|y| + \frac{8.86  Q}{|t|\cdot |y|^3}
\end{align*}
and in the type 3 case we obtain 
\begin{align*}
	1	
	\leq |x - y|
	< \frac{2.16}{|t|}|y| + \frac{8.86 Q}{|t|\cdot |y|^3}.
\end{align*}
Overall, we have proven the Lemma.
\end{proof}

\begin{proof}[Proof of Corollary \ref{cor:lin}] 
Let $C>0$ be given.

First, we choose a constant $t_0$ such that $\kappa(t_0)<2$. This works for any $t_0\geq 524$, however, if $t_0$ is close to 524, we will have to choose $C_0$ extremely large. In fact, we choose $C_0$ in the following way:
\[
	C_0 = \max \{
		(20.14 C)^{1/4},
		3 C^{1/3},
		(443C)^{1/(2-\kappa(t_0))}
	\}.
\]
The motivation for this choice will become apparent later in the proof. 

To prove Corollary~\ref{cor:lin}, we need to check the following statement: 
For any square-free integer $d$ and any $t\in \qi$ with $|t|\geq t_0$, the inequality 
\begin{equation}\label{ineq:cor-lin-inproof}
	|F_t(X,Y)|\leq C |t|
	\quad \text{in } X,Y \in \qi
\end{equation}
has no solutions $(x,y)$ with $\min\{|x|,|y|\}\geq C_0$, except solutions of the shape $(x,\pm x)$ with $|x| \leq (C|t|/4)^{1/4}$.

Let $(x,y)$ be a solution to \eqref{ineq:cor-lin-inproof} with 
$\min\{|x|,|y|\}\geq C_0$, for some $t$ with $|t|\geq t_0$.

First, assume that $y = \pm x$. Then we get that $|F_t(x,y)| = 4|x|^4 \leq C|t|$, which implies  $|x| \leq (C|t|/4)^{1/4}$.

From now on, assume that $y \neq \pm x$ (this will be necessary for the application of Lemma~\ref{lem:lbfory-star} later).
In order to use Lemma~\ref{lem:typewlog-star}, we need to check that  
\[
	\min\{|x|,|y|\} 
	\geq \left( \frac{20.14 Q}{|t|} \right)^{1/4}
	= (20.14 C)^{1/4}.
\]
This is indeed guaranteed by $\min\{|x|,|y|\} \geq C_0 \geq (20.14 C)^{1/4}$. Thus by Lemma~\ref{lem:typewlog-star} we may assume without loss of generality that $(x,y)$ is either of type 0 or of type 3.

Next, we can use Lemma~\ref{lem:lbfory-star}, which (with $Q = C|t|$) gives us
\begin{align*}
 	1
 	< \frac{2.16}{|t|} |y| + \frac{8.86 C}{|y|^3}.
\end{align*}
Since we are assuming $|y| \geq C_0 \geq  3C ^{1/3}$,
we obtain
\begin{equation}\label{ineq:cor1-beforelb}
 	1 < \frac{2.16}{|t|} |y| + 0.33,
\end{equation}
which implies
\begin{equation}\label{eq:lb-cor1}
	|y| > 0.31 |t|.
\end{equation}
In particular, we have $|y|\geq 0.28 |t|$, and we can apply 
Proposition~\ref{prop:approx} and combine it with Lemma~\ref{lem:betajsmall-star} (with $Q = C|t|$):
\begin{align*}
	\frac{1}{15.48 |t| |y|^{\kappa + 1}}
	< \left| \alpha^{(j)} - \frac{x}{y} \right|
	< \frac{8.86 \cdot C \cdot |t|}{|t|\cdot |y|^4}.
\end{align*}
This implies
\begin{align*}
	|y|^{3-\kappa} 
	< C_2 |t|,
\end{align*}
where $C_2 = 8.86 \cdot C \cdot 15.48$.
Combining this with \eqref{eq:lb-cor1} we obtain
\[
	|y|^{3-\kappa} < C_3 |y|,
\]
with $C_3 = 443 C > C_2 / 0.31$. 
Now the inequality $|y|^{3-\kappa} <443 C |y|$ implies $|y|^{2-\kappa} <443 C$ and thus 
\begin{equation}\label{ineq:finish-cor1}
	|y| < (443 C)^{1/(2 - \kappa)}.
\end{equation}
Assume for a moment that $443 C<1$. Then $|y|<1$, which implies $y=0$ and is excluded because of $C_0>0$. Thus we may assume that $443 C \geq 1$.
Since $\kappa = \kappa(t)\leq \kappa(t_0)$ for all $t\geq t_0$, inequality~\eqref{ineq:finish-cor1} then implies $|y| < (443 C)^{1/(2 - \kappa(t_0))}$.
This contradicts our assumption $|y|\geq C_0 \geq (443 C)^{1/(2 - \kappa(t_0))}$.
\end{proof}

\begin{proof}[Proof of Corollary \ref{cor:eps}]
Let $0 < \eps < 1$. We need prove that there exists an effectively computable constant $t_0\geq 100$ such that the following statement holds: 
For any square-free integer $d$ and any $t\in \qi$ with $|t|\geq t_0$ the inequality 
\begin{equation}\label{ineq:cor-eps-inproof}
	|F_t(X,Y)|\leq |t|^{2-\eps}
	\quad \text{in } X,Y \in \qi
\end{equation}
has no solutions $(x,y)$ with  $\min\{|x|,|y|\} > (|t|^{2-\eps}/4)^{1/4}$.

Assume that there exists a solution $(x,y)$ to \eqref{ineq:cor-eps-inproof} with 
$\min\{|x|,|y|\} > (|t|^{2-\eps}/4)^{1/4}$.

If $y = \pm x$. Then we get that $|F_t(x,y)| = 4|x|^4 \leq |t|^{2-\eps}$, which implies  $|x| \leq (|t|^{2-\eps}/4)^{1/4}$, a contradiction.
From now on, assume that $y \neq \pm x$ (this will be necessary for the application of Lemma~\ref{lem:lbfory-star} later).

In order to use Lemma~\ref{lem:typewlog-star}, we need to check that  
\[
	\min\{|x|,|y|\} 
	\geq \left( \frac{20.14 Q}{|t|} \right)^{1/4}
	= (20.14 |t|^{1-\eps})^{1/4}
	= 20.14^{(1 - \eps)/4} \cdot |t|^{1/4 - \eps/4}.
\]
This is indeed guaranteed by $\min\{|x|,|y|\} > |t|^{1/2 - \eps/4}$, if $|t|$ is large enough.
Thus by Lemma~\ref{lem:typewlog-star} we may assume without loss of generality that $(x,y)$ is either of type 0 or of type~3.

Next, we can use Lemma~\ref{lem:lbfory-star} with $Q = |t|^{2-\eps}$ and we get
\begin{align*}
 	1
 	\leq \frac{2.16 |y|}{|t|} + \frac{8.86 |t|^{1-\eps}}{|y|^3}.
\end{align*}
Using $|y| > |t|^{1/2 - \eps/4}$, we obtain
\[
	1 
	< \frac{2.16 |y|}{|t|} + \frac{8.86 |t|^{1-\eps}}{ |t|^{3/2 - 3\eps/4}} 
	= \frac{2.16 |y|}{|t|} + \frac{8.86}{|t|^{1/2 + \eps/4}}.
\]
If $|t|$ is large enough, the last summand is at most $0.33$. Thus, as in the previous proof, we end up with inequality~\eqref{ineq:cor1-beforelb}, which implies
\begin{equation}\label{eq:lb-cor2}
	|y| > 0.31 |t|.
\end{equation}
In particular, we have $|y|\geq 0.28 |t|$, and we can apply 
Proposition~\ref{prop:approx} and combine it with Lemma~\ref{lem:betajsmall-star}:
\begin{align*}
	\frac{1}{15.48 |t| |y|^{\kappa + 1}}
	< \left| \alpha^{(j)} - \frac{x}{y} \right|
	< \frac{8.86 |t|^{1-\eps}}{|y|^4}.
\end{align*}
This implies
\begin{align*}
	|y|^{3-\kappa} 
	< 137.16 |t|^{2-\eps}.
\end{align*}
Combining that with \eqref{eq:lb-cor2}, we obtain
\[
	|y|^{3-\kappa} < 137.16 / 0.31^{2-\eps} \cdot |y|^{2-\eps},
\]
which implies
\[
	|y| 
	< (137.16 / 0.31^{2-\eps})^{1/(1+\eps-\kappa(t))}
	< (|t|^{2-\eps}/4)^{1/4},
\]
a contradiction. 

Note that the last inequality holds if $|t|$ is large enough, since $\kappa(t)$ gets arbitrarily close to 1 if $|t|$ is large enough and $(|t|^{2-\eps}/4)^{1/4}$ grows as $|t|$ grows.
In all the arguments of this proof ``large enough'' was always effectively computable in terms of $\eps$. Thus we have proven Corollary~\ref{cor:eps}.
\end{proof}

\section*{Acknowledgments}

B.F.\ thanks OWSD and Sida (Swedish International Development Cooperation Agency) for a scholarship during her Ph.D.\ studies at Wits, during which she started this project. We thank Paul Voutier, Volker Ziegler and Clemens Heuberger for useful conversations. Also thanks to CIRM, AIMS-Sengal, and A Room of One's Own for funding time and space to do research. 

I.V.\ was supported by the Austrian Science Fund (FWF) under the project I4406, as well as by the Austrian Marshall Plan Foundation with a Marshall Plan Scholarship. She wants to thank Franklin \& Marshall College for their very friendly and generous hospitality.

\bibliographystyle{habbrv}
\bibliography{refs}

\section*{Appendix}\setcurrentname{Appendix}\label{sec:appendix}

Let $m>0$ be some given bound.
We describe how to give a list of all imaginary quadratic integers $0<|x| \leq m$ with either $\Im(x)>0$ or $0<x\in \Z$ (i.e.\ up to sign the full list of quadratic integers in the given range).

Any quadratic integer $x \in \qi$ can be written as
\[
	x=a+b\omega
	\quad \text{with} \quad
	a,b\in \Z
	\quad \text{and} \quad
	\omega = \begin{cases}
		\frac{1+\sqrt{-d}}{2}, &\text{if} \quad -d\equiv 1 \pmod{4},\\
		\sqrt{-d},  &\text{else}.
	\end{cases}
\]
Then we have
\begin{equation}\label{eq:smally_absy}
	|x|^2 = \begin{cases}
		{(a+\frac{b}{2})^2 + d(\frac{b}{2})^2}, &\text{if} \quad -d\equiv 1 \pmod{4},\\
		{a^2 + d b^2},  &\text{else}.
	\end{cases}	
\end{equation}
Therefore, we only need to check $d$'s with the following properties: $d\geq 1$, $d$ is square free and $1/4+d/4 \leq m^2$ if $d\equiv 1 \pmod{4}$ and $d \leq m^2$ else. For example, for $m=3$, we get that
\[
	d \in \{ 1,2,3,5,6,7,11,15,19,23,31,35\}.
\]
Then for each $d$ we need to find all $a,b\in \Z$ such that $|a+b\omega|\leq m$.
If we first add all integers $1,2 \bb \floor{m}$ to our list, we can assume that $b\geq 1$.
 From~\eqref{eq:smally_absy} we get the following bounds: If $-d\equiv 1 \pmod{4}$, then
\[
	1 \leq b \leq 2m/\sqrt{d} 
	\quad \text{and} \quad
	- \sqrt{m^2-d\left(\frac{b}{2}\right)^2} - \frac{b}{2}		
	\leq a 
	\leq \sqrt{m^2-d\left(\frac{b}{2}\right)^2} - \frac{b}{2}.
\]
If $-d\equiv 2,3 \pmod{4}$, then
\[
	1 \leq b \leq m/\sqrt{d} 
	\quad \text{and} \quad
	- \sqrt{m^2-db^2} \leq a \leq \sqrt{m^2-db^2}.
\]
Then we only need to loop through all such $d$'s, $b$'s and $a$'s.

For example, for $m=3$ we obtain a list of $76$ quadratic integers; here is a very short summary:
\[
	x \in \{
	1, 2, 3, 
	\pm 2 + i, \pm 1 + i, i, 
	2i \pm 2
	\bb
	\frac{\pm 1+\sqrt{-35}}{2}
	\}.
\]

Finally, in order to not rely on hyperlinks, here are the links to the Sage code, that has been referred to in the paper: 

\url{https://cocalc.com/IngridVukusic/QuarticFamily/Irreducibility} for the proof of Lemma~\ref{lem:irred},

\url{https://cocalc.com/IngridVukusic/QuarticFamily/SmallSolutions} for the proof of Lemma~\ref{lem:smallSols}, 

\url{https://cocalc.com/IngridVukusic/QuarticFamily/Rouche} for the proofs of Lemma~\ref{lem:roots-approx}, \ref{lem:betterApprox} and \ref{lem:betterApproxalpha3}, 

\url{https://cocalc.com/IngridVukusic/QuarticFamily/LowerBoundForY} for the proof of Lemma~\ref{lem:lbfory}.

\end{document}